\providecommand{\N}{\mathbb{N}}    
\providecommand{\R}{\mathbb{R}}
\providecommand{\Hyp}{\mathbb{H}}
\providecommand{\Eucl}{\mathbb{E}}
\providecommand{\Sp}{\mathbb{S}}
\providecommand{\Mfd}{M}
\providecommand{\Bl}{B} 
\providecommand{\Zl}{Z}                          
\providecommand{\dd}{\mathrm{d}}
\providecommand{\ex}{\mathrm{e}}
\providecommand{\Ct}{\mathrm{C}}
\providecommand{\Rsc}{\mathrm{R}} 
\DeclarePairedDelimiter\abs{\lvert}{\rvert}
\DeclarePairedDelimiter\nm{\lVert}{\rVert}
\DeclarePairedDelimiter\sk{\langle}{\rangle}
\DeclarePairedDelimiter\interval{]}{[}
\DeclarePairedDelimiter\Interval{[}{[}
\DeclarePairedDelimiter\intervaL{]}{]}
\DeclareMathOperator{\Ric}{Ric}
\DeclareMathOperator{\rel}{v}
\theoremstyle{plain}
\newtheorem{thm}{Theorem} 
\newtheorem{lem}{Lemma}[section]
\newtheorem{cor}[lem]{Corollary} 
\newtheorem{prop}[lem]{Proposition}
\theoremstyle{definition}
\newtheorem*{defn*}{Definition}
\theoremstyle{remark}
\newtheorem*{rem*}{Remark}
\title{Instantaneously complete Yamabe~flow on hyperbolic space}
\author{Mario B. Schulz\thanks{This research was supported by the Swiss National Science Foundation  under grant 200020\_159925.}
\\[-.5ex]\footnotesize\itshape
Queen Mary University of London, 
Mile End Road, London E1 4NS 
}
\date{\today}
\begin{document}

\maketitle

\begin{abstract}
We prove existence of instantaneously complete Yamabe flows on hyperbolic space of arbitrary dimension $m\geq3$. 
The initial metric is assumed to be conformally hyperbolic with conformal factor and scalar curvature bounded from above. 
We do not require initial completeness or bounds on the Ricci curvature. 
If the initial data are rotationally symmetric, the solution is proven to be unique in the class of instantaneously complete, rotationally symmetric Yamabe flows.
\end{abstract}


The Yamabe flow was introduced by Richard Hamilton \cite{Hamilton1989}. 
It describes a family of Riemannian metrics $g(t)$ subject to the equation $\partial_{t}g=-\Rsc g$ and tends to evolve a given initial metric towards a metric of vanishing scalar curvature.  
Hamilton showed that global solutions always exist on compact manifolds without boundary. 
Their asymptotic behaviour was subsequently analysed by Chow \cite{Chow1992}, Ye \cite{Ye1994}, Schwetlick and Struwe \cite{Schwetlick2003} and Brendle \cite{Brendle2005, Brendle2007}. 
Less is known about the Yamabe flow on noncompact manifolds.    
Daskalopoulos and Sesum \cite{Daskalopoulos2013} analysed the profiles of self-similar solutions (Yamabe solitons). 
Ma and An \cite{Ma1999} proved short-time existence of Yamabe flows on noncompact, locally conformally flat manifolds $\Mfd$ under the assumption that the initial manifold $(\Mfd,g_0)$ is complete with Ricci tensor bounded from below. 
More recently, Bahuaud and Vertman \cite{Bahuaud2014,Bahuaud2016} constructed Yamabe flows starting from spaces with incomplete edge singularities such that the singular structure is preserved along the flow.

In dimension $m=2$ the Yamabe flow coincides with the Ricci flow. 
Peter Topping and Gregor Giesen \cite{Topping2010,Giesen2011,Topping2015} introduced the notion of instantaneous completeness and obtained existence and uniqueness of instantaneously complete Ricci/Yamabe flows on arbitrary surfaces. 
The analysis of the flow on the hyperbolic disc plays an important role in their work.  
It relies on results which exploit the fact that the Ricci tensor is bounded by the scalar curvature in dimension 2. 
 
The goal of this paper is to find techniques which allow a generalisation of Giesen and Topping's results to the Yamabe flow on hyperbolic space $(\Hyp,g_{\Hyp})$ of dimension $m\geq3$.   
$(\Hyp,g_{\Hyp})$ is a complete, noncompact, simply connected manifold of constant sectional curvature $-1$ and it is conformally equivalent to the Euclidean unit ball $(\Bl_1,g_{\Eucl})$.  

\begin{defn*}
A family $(g(t))_{t\in[0,T]}$ of Riemannian metrics on a manifold $\Mfd$ with scalar curvature $\Rsc=\Rsc_{g(t)}$ is called a \emph{Yamabe flow}, if $\tfrac{\partial}{\partial t}g=-\Rsc\, g$. 
The family $(g(t))_{t\in[0,T]}$ is called \emph{instantaneously complete}, 
if the Riemannian manifold $(\Mfd,g(t))$ is geodesically complete for every $0<t\leq T$. 
\end{defn*}

Since the Yamabe flow preserves the conformal class of the metric, any conformally hyperbolic Yamabe flow $(g(t))_{t\in[0,T]}$ on $\Hyp$ is given by $g(t)=u(\cdot,t)\,g_{\Hyp}$, 
where the conformal factor $u\colon\Hyp\times[0,T]\to\R$ is a positive function evolving by the equation 
\begin{align}\label{eqn:Yamabe-flow}
\frac{1}{m-1}\frac{\partial u}{\partial t}
 =-\frac{u\,\Rsc}{m-1} 
&=m+\frac{\Delta_{g_{\Hyp}} u}{u}
+\frac{(m-6)}{4}\frac{\abs{\nabla u}_{g_{\Hyp}}^2}{u^2}
\end{align}
where $m=\dim\Hyp$, where $\Delta_{g_{\Hyp}}$ denotes the Laplace-Beltrami operator with respect to the hyperbolic background metric $g_\Hyp$ and where $\abs{\nabla u}_{g_{\Hyp}}^2=g_{\Hyp}(\nabla u,\nabla u)$. 
Introducing the exponent $\eta\vcentcolon=\frac{m-2}{4}$ to define $U=u^{\eta}$, equation \eqref{eqn:Yamabe-flow} is equivalent to  
\begin{align}\label{eqn:Yamabe-flow-eta}
\frac{U^{\frac{1}{\eta}}}{m-1}\frac{\partial U}{\partial t}
&=m\eta U+\Delta_{g_{\Hyp}} U
\end{align}
which follows by virtue of $(\eta-1)=\frac{(m-6)}{4}$ and $\tfrac{1}{\eta}\Delta_{g_{\Hyp}} u^\eta=(\eta-1)u^{\eta-2}\abs{\nabla u}_{g_{\Hyp}}^2+u^{\eta-1}\Delta_{g_{\Hyp}} u$. 
While equation \eqref{eqn:Yamabe-flow-eta} has a simpler structure, pointwise bounds on $u$ follow easier from equation \eqref{eqn:Yamabe-flow}. 
We prove the following statements.

\begin{thm}[Existence]\label{thm:existence}
Let $g_0=u_0g_{\Hyp}$ be any (possibly incomplete) conformal metric on $(\Hyp,g_{\Hyp})$ with bounded conformal factor $0<u_0\in\Ct^{4,\alpha}(\Hyp)$ and scalar curvature $\Rsc_{g_0}\leq K_0$ bounded from above.  
Then, for any $T>0$ there exists an instantaneously complete family of metrics $(g(t))_{t\in[0,T]}$ satisfying the Yamabe flow equation 
\begin{align*}
\left\{\begin{aligned}
\tfrac{\partial}{\partial t}g(t)&=-\Rsc_{g(t)}\,g(t) \quad&&\text{ in }\Hyp\times[0,T], \\[1ex]
g(0)&=g_0 \quad&&\text{ on }\Hyp.
\end{aligned}\right.
\end{align*}
Moreover, $g(t)\geq m(m-1)\,t\,g_{\Hyp}$ for any $t\in\intervaL{0,T}$. 
As $t\searrow0$, the metric $g(t)$ converges locally in class $\Ct^2$ to $g_0$. 
\end{thm}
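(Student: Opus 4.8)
The plan is to realise $g(t)=u(\cdot,t)\,g_\Hyp$ as a limit of solutions of initial--boundary value problems on an exhaustion of $\Hyp$ by compact domains, controlled by explicit soliton barriers. The key elementary observation is that a spatially constant conformal factor $u=u(t)$ turns the right-hand side of \eqref{eqn:Yamabe-flow} into $m$, so $\bigl(c+m(m-1)t\bigr)g_\Hyp$ is an exact Yamabe flow for every $c\ge0$. With $c=0$ this gives the sub-solution $\underline u(t)=m(m-1)t$, which vanishes at $t=0$ and therefore lies below $u_0$; with $c=\sup_\Hyp u_0<\infty$ it gives a super-solution lying above $u_0$. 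Once a comparison principle is in force, $\underline u(t)\le u(\cdot,t)$ immediately yields $g(t)\ge m(m-1)t\,g_\Hyp$, and since $g_\Hyp$ is complete this produces instantaneous completeness at no extra cost.

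Next I would fix geodesic balls $D_1\Subset D_2\Subset\cdots$ exhausting $\Hyp$ and, for each $n$ and each large constant $j\ge\sup_\Hyp u_0$, solve \eqref{eqn:Yamabe-flow} (equivalently the simpler-structured \eqref{eqn:Yamabe-flow-eta}) on $D_n\times\Interval{0,\infty}$ with initial datum $u_0|_{D_n}$ and Dirichlet datum $j$ on $\partial D_n$. On $D_n$ the positive constant $\min_{D_n}u_0$ and the function $j+m(m-1)t$ bracket the solution, so the equation is uniformly parabolic on $D_n\times[0,T]$ and classical quasilinear parabolic theory supplies a unique smooth solution $u_{n,j}$, non-decreasing in $j$ by the maximum principle on the compact manifold-with-boundary $D_n$. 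Letting $j\to\infty$ yields the maximal solution $u_n$ on $D_n$ with infinite boundary values. The crucial step is to produce a super-solution $v_n$ of \eqref{eqn:Yamabe-flow} on $D_n\times\Interval{0,\infty}$ that blows up like $d(\cdot,\partial D_n)^{-2}$ at the boundary, starts above $u_0$, and stays finite --- indeed locally bounded uniformly in large $n$ --- on every fixed compact subset of $\Hyp$; comparison then gives $u_{n,j}\le v_n$, hence $u_n\le v_n<\infty$ in the interior. It is here that the hypothesis $\Rsc_{g_0}\le K_0$ is used, to guarantee that $u_0$ is genuinely dominated by such a barrier for all times.

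Granting $v_n$, the function $u_n$ satisfies $m(m-1)t\le u_n\le v_n$, and comparison on $D_n$ between $u_n$ and the restriction $u_{n+1}|_{D_n}$ (which has finite boundary trace and the same initial datum) shows $u_n\ge u_{n+1}$ there. Thus $u_n\searrow u_\infty$ pointwise on $\Hyp$, with $m(m-1)t\le u_\infty\le v_n$ for each $n$, so $u_\infty$ is squeezed between positive, locally bounded functions on $\Hyp\times\interval{0,\infty}$. On every $K\times[\delta,T]$ the equation is then uniformly parabolic along the sequence, and interior parabolic (De Giorgi--Nash--Moser and Schauder) estimates promote the monotone convergence to convergence in $C^\infty_{\mathrm{loc}}(\Hyp\times\interval{0,\infty})$; hence $u_\infty$ is a smooth positive solution and $g(t)=u_\infty(\cdot,t)\,g_\Hyp$ is the desired instantaneously complete Yamabe flow with $g(t)\ge m(m-1)t\,g_\Hyp$. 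For the convergence $g(t)\to g_0$ as $t\searrow0$, I would use that the finite-$j$ approximations $u_{n,j}$ attain the exact datum $u_0$ at $t=0$ and, via a short-time lower barrier of the form $u_0-Ct$ on each $D_n$ (legitimate since $u_0\in\Ct^{4,\alpha}$), obtain estimates near $t=0$ uniform in $n$ and $j$; passing to the limit gives $u_\infty(\cdot,0)=u_0$ and local smooth convergence up to $t=0$ on compact subsets of $\Hyp$.

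The step I expect to be the genuine obstacle is the construction and uniform control of the upper barrier $v_n$. Because neither completeness of $g_0$ nor any bound on $\Ric_{g_0}$ is assumed, the global maximum-principle and Shi-type arguments that are standard in the complete setting are unavailable, so everything must be localised; one must exhibit an explicit super-solution tailored to the degenerate, fast-diffusion structure of \eqref{eqn:Yamabe-flow-eta} that simultaneously has the correct boundary blow-up (so that $j\to\infty$ can be passed), lies above $u_0$ for all time (which is exactly where $\Rsc_{g_0}\le K_0$ enters), and does not degenerate on compact sets as $D_n\nearrow\Hyp$.
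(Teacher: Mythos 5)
Your framework — exhaustion by compact domains, sub/super-solution $\bigl(c+m(m-1)t\bigr)g_\Hyp$, monotone limit — is in the spirit of Topping--Giesen for the two-dimensional case, and it differs from the paper's route in a decisive way: the paper never sends the boundary data to infinity. Instead it works with the \emph{finite} boundary datum $\phi(x,t)=u_0(x)+m(m-1)t+\rel(x)\psi(\kappa t)/\kappa$ from \eqref{eqn:boundarydata}, chosen so that the scalar curvature of $g(t)$ on $\partial\Omega$ stays trapped between $-\tfrac{1}{t+\varepsilon}$ and $\tfrac{K_0}{1-K_0 t}$; the maximum principle for the evolution \eqref{eqn:evoR} then propagates these bounds to the interior (Lemma~\ref{lem:Rsc}), which in turn yields local H\"older estimates (Lemma~\ref{lem:hoelderestimate}) and the uniform gradient estimate (Lemma~\ref{lem:uniformestimate}, via a Bochner/cutoff argument). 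Compactness then produces the solution by a diagonal argument, with no barrier at the boundary required. This is where the hypothesis $\Rsc_{g_0}\leq K_0$ genuinely enters — not in a domination statement for a barrier.

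The gap in your proposal is precisely the point you flag: the upper barrier $v_n$ that blows up like $d(\cdot,\partial D_n)^{-2}$, dominates $u_0$, and stays locally bounded uniformly in $n$. This is not a technicality you can grant. For a time-independent candidate $V=v^\eta$ the super-solution condition from \eqref{eqn:Yamabe-flow-eta} reads $m\eta V+\Delta_{g_\Hyp}V\leq 0$, but any $V$ blowing up like $(\ell-r)^{-k}$ near $\partial B_\ell$ has $\Delta_{g_\Hyp}V/V\sim k(k+1)(\ell-r)^{-2}\to+\infty$, so the condition fails near the boundary for every $k>0$. The natural time-dependent ansatz $v_n(x,t)=\phi(x)+m(m-1)t$ with $\phi$ blowing up gives, after simplification near $\partial D_n$, the requirement $k\bigl(1+k\tfrac{m-2}{4}\bigr)\leq 0$, which is false for all $k>0$ and all $m\geq3$. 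So the class of barriers one reaches for first does not exist, and some structurally different construction would be needed; the paper offers none (it deliberately avoids the issue), and Theorem~\ref{thm:nonexistence} even signals that boundary blow-up control is genuinely delicate in dimension $\geq3$. Your stated use of $\Rsc_{g_0}\leq K_0$ ("to guarantee $u_0$ is dominated by the barrier for all times") is also not coherent: once a super-solution dominates $u_0$ at $t=0$, the maximum principle propagates domination automatically, so $K_0$ cannot be doing the work you describe. Until the barrier is built, the chain "let $j\to\infty$ $\Rightarrow$ $u_n$ finite in the interior $\Rightarrow$ monotone limit" does not close, and this is exactly the obstacle the paper's scalar-curvature-controlled boundary data is designed to route around.
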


\begin{rem*}
On noncompact, locally conformally flat manifolds $\Mfd$, Ma and An \cite{Ma1999} require bounded scalar curvature, a lower bound on the Ricci tensor and completeness of $(\Mfd,g_0)$ for short-time existence and additionally non-positive scalar curvature for global existence. 
\end{rem*}

\begin{thm}[Uniqueness]\label{thm:uniqueness}
Let $(g(t))_{t\in[0,T]}$ and $(\tilde{g}(t))_{t\in[0,T]}$ be two conformally hyperbolic Yamabe flows on $(\Hyp,g_{\Hyp})$ satisfying 
\begin{enumerate}[label={\normalfont(\roman*)}]
\item\label{cond:Uniqueness-i}
$\exists b\in\R: \quad \tilde{g}(0)\leq b\,g_{\Eucl}$, 
\item\label{cond:Uniqueness-ii}
$\forall t\in[0,T]:\quad g(t)\geq m(m-1)t\,g_{\Hyp}$.
\end{enumerate}
Then, if $\tilde{g}(0)\leq g(0)$, we have $\tilde{g}(t)\leq g(t)$ for all $t\in[0,T]$. 

In particular, if $g(t)$ and $\tilde{g}(t)$ both satisfy \ref{cond:Uniqueness-ii} and if $g(0)=\tilde{g}(0)\leq b\,g_{\Eucl}$, then $g\equiv\tilde{g}$. 
\end{thm}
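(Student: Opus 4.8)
The plan is to transfer the problem to the Poincaré ball model and run a parabolic comparison principle there. Write $g_\Hyp=\rho^2g_\Eucl$ on $\Bl_1$ with $\rho(x)=\tfrac2{1-\abs x^2}$, and set $g(t)=v(\cdot,t)g_\Eucl$, $\tilde g(t)=\tilde v(\cdot,t)g_\Eucl$. Since $g_\Eucl$ is scalar flat, $\partial_tg=-\Rsc\,g$ becomes the reaction-free fast diffusion equation
\begin{equation*}
\partial_t\Phi=\frac{(m-1)(m+2)}{m-2}\,\Delta_{g_\Eucl}\bigl(\Phi^{\gamma}\bigr),\qquad\gamma=\frac{m-2}{m+2}\in\interval{0,1},
\end{equation*}
for $\Phi:=v^{\frac{m+2}4}$, and likewise for $\tilde\Phi:=\tilde v^{\frac{m+2}4}$; the hyperbolic geometry has disappeared from the equation and survives only in the behaviour at $\partial\Bl_1$. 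Condition~\ref{cond:Uniqueness-i} becomes $\tilde\Phi(\cdot,0)\le b^{\frac{m+2}4}$, and condition~\ref{cond:Uniqueness-ii} becomes $\Phi(x,t)\ge\bigl(m(m-1)t\,\rho(x)^2\bigr)^{\frac{m+2}4}$. The latter is sharp: $m(m-1)t\,g_\Hyp$ is itself an exact Yamabe flow (indeed $\Rsc_{m(m-1)t\,g_\Hyp}=-\tfrac1t$), so the bounding function is an exact solution of the fast diffusion equation — the analogue of the Barenblatt profile — which blows up at $\partial\Bl_1$ at the maximal admissible rate $\rho^{\frac{m+2}2}$. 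We have to prove $\tilde\Phi\le\Phi$.

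The engine is a localisation. On every compact cylinder $\overline{\Bl_r}\times[0,T]$ with $r<1$ the functions $\Phi,\tilde\Phi$ are smooth for $t>0$ and, being positive and continuous on a compact set, pinched between positive constants; hence $w:=\tilde\Phi-\Phi$ solves there a uniformly parabolic linear equation $\partial_tw=\Delta_{g_\Eucl}(a\,w)$ whose coefficient $a$ — a difference quotient of the increasing function $s\mapsto s^{\gamma}$ — is smooth and pinched between positive constants. The classical weak maximum principle and $w(\cdot,0)\le0$ give $\sup_{\Bl_r\times[0,T]}w\le\max\{0,\sup_{\partial\Bl_r\times[0,T]}w\}$; letting $r\nearrow1$ reduces Theorem~\ref{thm:uniqueness} to showing that $w$ has nonpositive boundary values at $\partial\Bl_1$. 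This last point is not soft, because $\Phi$ and $\tilde\Phi$ blow up there.

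Both hypotheses enter at this stage. From \ref{cond:Uniqueness-i} I would use the explicit rotationally symmetric supersolution of \eqref{eqn:Yamabe-flow}
\begin{equation*}
\bar g(t)=\Bigl(\tfrac b4\bigl(1-\abs x^2\bigr)+m(m-1)t\Bigr)g_\Hyp .
\end{equation*}
With $\Delta_{g_\Hyp}\bigl(1-\abs x^2\bigr)=-2\rho^{-2}\bigl(m+(m-2)\rho\abs x^2\bigr)$ and $\rho\,(1-\abs x^2)=2$, the supersolution inequality for \eqref{eqn:Yamabe-flow} reduces (bounding the conformal factor of $\bar g$ from below by $\tfrac b4(1-\abs x^2)$) to $-2m\bigl(1-\abs x^2\bigr)+(2-3m)\abs x^2\le0$, valid for all $m\ge3$; moreover $\bar g(0)=\tfrac b{1-\abs x^2}\,g_\Eucl\ge b\,g_\Eucl\ge\tilde g(0)$, and the conformal factor of $\bar g(t)$ relative to $g_\Hyp$ tends to $m(m-1)t$ as $\abs x\to1$. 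An auxiliary comparison with $\bar g$ (which goes through because $\bar g$ blows up at $\partial\Bl_1$, so the shrinking-ball argument only sees the lateral values, controlled since $\tilde g$ starts with bounded Euclidean conformal factor) yields $\tilde g(t)\le\bar g(t)$, hence $\tilde\Phi\lesssim\rho^{\frac{m+2}2}$ near $\partial\Bl_1$. From \ref{cond:Uniqueness-ii}, $\Phi\gtrsim\rho^{\frac{m+2}2}$ near $\partial\Bl_1$, so that at points where $w>0$ the coefficient $a=(m-1)\xi^{\gamma-1}$, with $\xi$ lying between $\Phi$ and $\tilde\Phi$ and hence $\xi\ge\Phi\gtrsim\rho^{\frac{m+2}2}$, satisfies $a\lesssim\rho^{-2}$ there.

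The main obstacle is to convert these two one-sided rates — ``$\tilde g$ at most hyperbolic of the maximal order at $\partial\Bl_1$'' from \ref{cond:Uniqueness-i}, ``$g$ at least hyperbolic of the maximal order at $\partial\Bl_1$'' from \ref{cond:Uniqueness-ii} — into nonpositivity of the boundary values of $w$: the crude barrier only gives $\tilde g\le\bar g=g+O(1)\,g_\Eucl$ near $\partial\Bl_1$, matching the leading blow-up of $g$ but leaving a lower-order remainder to be absorbed. I would close this either by sharpening $\bar g$ (replacing $\tfrac b4(1-\abs x^2)$ by a time-decaying profile such as $b\,\rho^{-2}\ex^{-\lambda t}$ with $\lambda$ large, which keeps the boundary limit $m(m-1)t$ but has a smaller correction) or, more robustly, by rerunning the maximum principle with the weight $\rho^{-\frac{m+2}2}$ — against which $\rho^{-\frac{m+2}2}w$ is bounded by the previous paragraph — examining $\max\bigl(\chi_R\,\rho^{-\frac{m+2}2}w-\varepsilon\,\zeta\bigr)$ over spatial cut-offs $\chi_R\uparrow1$ and a suitable auxiliary $\zeta$, the errors coming from $\chi_R$ and from the weight being absorbed precisely because $a\lesssim\rho^{-2}$ near $\partial\Bl_1$; letting $R\to\infty$ and then $\varepsilon\searrow0$ yields $w\le0$. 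Granting this, $\tilde g(t)\le g(t)$ on $\Hyp\times[0,T]$. For the final statement, if $g$ and $\tilde g$ both satisfy \ref{cond:Uniqueness-ii} and $g(0)=\tilde g(0)\le b\,g_\Eucl$, then \ref{cond:Uniqueness-i} holds for each; applying the comparison just proved with the two flows in either order gives $\tilde g\le g$ and $g\le\tilde g$, hence $g\equiv\tilde g$.
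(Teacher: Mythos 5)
Your reduction is sound up to the last step, and you correctly identify the crux: after transplanting to the Poincar\'e ball and writing the fast-diffusion equation for $\Phi,\tilde\Phi$, a compact-cylinder maximum principle reduces matters to showing $w=\tilde\Phi-\Phi\leq0$ ``at'' $\partial\Bl_1$ — and the barriers at your disposal only give $\tilde g\leq g+O(1)g_\Eucl$ there, which does not close. What you propose to do about it is a sketch, not a proof. Your first fix — replacing $\tfrac b4(1-\abs x^2)$ by $b\rho^{-2}\ex^{-\lambda t}$ — still produces a barrier with boundary asymptotics $m(m-1)t\,g_\Hyp+O(1)g_\Eucl$; the $O(1)$ term shrinks in $t$ but not in $\abs x$, so the same mismatch persists. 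Your second fix, a weighted maximum principle against $\rho^{-(m+2)/2}$ with an auxiliary $\zeta$, is the right kind of idea, but the absorption of the cutoff error and the weight error is exactly the nontrivial part; asserting that they are absorbed ``because $a\lesssim\rho^{-2}$'' is not a verification, and in the two-dimensional analogue Topping found that this route does not close pointwise — which is precisely why he invented the area estimate. You are, in effect, at the point where the real work begins.

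The paper's own argument is genuinely different and sidesteps the boundary-pointwise difficulty entirely. Passing to logarithmic polar coordinates puts everything on the cylinder $(\Zl,\zeta)=(\interval{0,\infty}\times\Sp^{m-1},\dd s^2+g_{\Sp^{m-1}})$, a space of constant positive scalar curvature. It then works with the \emph{integral} quantity $J(t)=\int_{\Zl_S}(V^{\eta+1}-U^{\eta+1})_+\varphi\,\dd\mu_\zeta$, a weighted area difference in the spirit of Topping's interior area estimate. Proposition~\ref{prop:upper} gives the pointwise bound $V^\eta-U^\eta\leq b^\eta$ (your assumption~\ref{cond:Uniqueness-i}), and the lower barrier~\ref{cond:Uniqueness-ii} gives $U\gtrsim t/s^2$ near $s=0$; these two feed into a H\"older inequality with exponents $\lambda,1-\lambda$ to produce the differential inequality $J'\lesssim (J/t)^{\gamma/(1+\gamma)}Q^{1/(1+\gamma)}$. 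The crucial point is that the $O(1)$ slack you could not kill pointwise becomes a bounded factor $b^{(1-\lambda)\eta}$ inside the H\"older estimate, while the lower bound on $U$ supplies an integrable weight $U^{-\gamma}\lesssim (s^2/t)^\gamma$; the entire residual mass then sits in the cutoff quantity $Q$, which Topping's choice of $\varphi$ makes tend to zero as $S\searrow0$. So the area-estimate method buys precisely what you could not get: it trades pointwise boundary control for an integral that the cutoff can squeeze to zero. Your proposal is a reasonable first attempt, but as written it has a genuine gap, and you should not expect to close it by barriers alone.
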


\begin{rem*}
In the Poincar\'{e} ball model for $\Hyp$, the conformally hyperbolic initial metric $\tilde{g}(0)$ can be compared to the Euclidean metric, whose pullback we also denote as $g_{\Eucl}$. 
Assumption \ref{cond:Uniqueness-i} means, that the initial manifold $(\Hyp,\tilde{g}(0))$ is incomplete and has finite diameter. 

Assumption \ref{cond:Uniqueness-ii} implies instantaneous completeness of $g(t)$. 
We conjecture that instantaneously complete, conformally hyperbolic Yamabe flows always satisfy~\ref{cond:Uniqueness-ii}. 
For rotationally symmetric flows, this is proved in Proposition \ref{prop:lower}. 

The instantaneously complete flow Topping \cite{Topping2010} constructs on $2$-dimensional manifolds has a certain maximality property which we also observe in higher dimensions: 
Theorem~\ref{thm:uniqueness} implies, that if $g_0\leq b\,g_{\Eucl}$, then the Yamabe flow constructed in Theorem~\ref{thm:existence} is \emph{maximally stretched} in the sense that any other Yamabe flow with the same or lower initial data stays below it. 

Moreover, Theorem~\ref{thm:uniqueness} implies, that if $g_0\leq b\,g_{\Eucl}$, then any two solutions $(g(t))_{t\in[0,T]}$ and $(\tilde{g}(t))_{t\in[0,\tilde{T}]}$ constructed in Theorem~\ref{thm:existence} agree on $[0,T]\cap[0,\tilde{T}]$. 
Since $T>0$ is arbirtary in Theorem~\ref{thm:existence}, we then obtain global existence, i.\,e. an instantaneously complete Yamabe flow $(g(t))_{t\in\Interval{0,\infty}}$ on $\Hyp$ with $g(0)=g_0$. 
\end{rem*}

\begin{thm}\label{thm:nonexistence}
Let $g_0=u_0g_{\R^m}$ be a conformally Euclidean metric on $(\R^m,g_{\R^m})$ with $m\geq 3$. 
If  $u_0(x)\leq b\abs{x}^{-4}$ for some finite constant $b$ and all $x\in\R^m$, then any Yamabe flow $(g(t))_{t\in[0,T]}$ on $\R^m$ with $g(0)=g_0$ is geodesically incomplete for all $t\in[0,T]$.
\end{thm}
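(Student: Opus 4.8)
The hypothesis $u_0(x)\leq b\abs{x}^{-4}$ says precisely that $g_0$ lies below the metric $\hat g\vcentcolon=b\abs{x}^{-4}g_{\R^m}$, and $\hat g$ is flat: the inversion $\Phi(x)=x/\abs{x}^2$ is a conformal diffeomorphism of $\R^m\setminus\{0\}$ with $\Phi^*g_{\R^m}=\abs{x}^{-4}g_{\R^m}$, so $\hat g=\Phi^*(b\,g_{\R^m})$ is isometric to the flat metric $b\,g_{\R^m}$ on $\R^m\setminus\{0\}$, whose puncture sits at finite distance. In particular $\Rsc_{\hat g}=0$, so the constant family $\hat g(t)\equiv\hat g$ is itself a Yamabe flow, and $g(0)=g_0\leq\hat g(0)$. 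The plan is to show by comparison that this persists, i.e.\ $g(t)\leq\hat g$ for all $t\in[0,T]$. Granting this, incompleteness follows at once: for any $\omega\in\Sp^{m-1}$ the ray $r\mapsto r\omega$ ($r\geq1$) leaves every compact subset of $\R^m$ yet has $g(t)$-length at most $\int_1^\infty\sqrt{b}\,r^{-2}\,\dd r=\sqrt{b}<\infty$, so a sequence running along it to infinity is $g(t)$-Cauchy without a limit in $(\R^m,g(t))$, and by Hopf--Rinow $(\R^m,g(t))$ is geodesically incomplete for every $t$.

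To prove $g(t)\leq\hat g$ I would apply the maximum principle to the conformal factor. Writing $g(t)=u(\cdot,t)g_{\R^m}$, equation \eqref{eqn:Yamabe-flow} holds with $g_{\R^m}$ in place of $g_{\Hyp}$ and without the constant $m$ (since $\Rsc_{g_{\R^m}}=0$), and $\hat u(x)\vcentcolon=b\abs{x}^{-4}$ is a stationary supersolution. More generally, using $\Delta_{g_{\R^m}}\abs{x}^{-s}=s(s+2-m)\abs{x}^{-s-2}$ one checks that $\bar u=D\abs{x}^{-s}$ satisfies
\[
0=\tfrac{1}{m-1}\,\partial_t\bar u\;\geq\;\frac{\Delta_{g_{\R^m}}\bar u}{\bar u}+\frac{m-6}{4}\,\frac{\abs{\nabla\bar u}_{g_{\R^m}}^{2}}{\bar u^{2}}=\frac{s(m-2)}{4}(s-4)\,\abs{x}^{-2}
\]
for every $D>0$ and every $0\leq s\leq4$, with equality when $s=4$. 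These supersolutions (of which $\hat u$ is the endpoint $s=4$, $D=b$) are the natural barriers near the incomplete end.

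The main obstacle is that the incomplete end is the single ``point at infinity'' of $\R^m$. Equivalently, conjugating the flow by $\Phi$ leaves a conformally flat Yamabe flow $\Phi^*g(t)=v(\cdot,t)g_{\R^m}$ on a punctured ball with $v(\cdot,0)\leq b$, and one has to control $v$ at the puncture. This is genuinely delicate because for $m\geq3$ the Yamabe flow is a \emph{supercritical} fast-diffusion equation --- writing $g=w^{4/(m+2)}g_{\R^m}$ gives $\partial_t w=\tfrac{(m-1)(m+2)}{m-2}\,\Delta_{g_{\R^m}}\bigl(w^{(m-2)/(m+2)}\bigr)$ with $\tfrac{m-2}{m+2}<\tfrac{m-2}{m}$ --- so \emph{a priori} one cannot exclude that the flow draws in mass near infinity and builds a complete end there; in dimension $2$, where the analogous assertion fails and instantaneously complete Ricci flows exist, this is precisely what happens. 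The way around it should be to exploit that a point has zero parabolic capacity together with the bound $v(\cdot,0)\leq b$: first, $u_0\in\Ct^{0}(\R^m)$ with $u_0\leq b\abs{x}^{-4}\to0$ forces $u_0$, hence $v(\cdot,0)$, to be bounded; then one compares $v$ on $\bigl(\{0<\abs{y}<\delta\}\bigr)\times[0,T]$ against the barriers above, using their singularity at the puncture and the boundedness of $v$ on the compact outer boundary $\{\abs{y}=\delta\}\times[0,T]$ (where the flow is smooth) to remove the puncture in a limiting argument and conclude $v(\cdot,t)\leq b$, i.e.\ $g(t)\leq\hat g$. This last step --- making the comparison legitimate at the incomplete end --- is in the spirit of the proof of Theorem~\ref{thm:uniqueness}, and is where the real work lies.
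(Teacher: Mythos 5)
Your setup is correct: the barrier $\hat g=b\abs{x}^{-4}g_{\R^m}$ is a static, flat Yamabe flow, the supersolution computation with $\bar u=D\abs{x}^{-s}$ is right, and once $g(t)\leq\hat g$ is established the finite-length-of-radial-rays argument gives incompleteness. However, you explicitly defer the key comparison step to ``where the real work lies,'' and the route you gesture at (invert so the incomplete end becomes a puncture at the origin, then remove the puncture via a capacity argument ``in the spirit of the proof of Theorem~\ref{thm:uniqueness}'') is both unproven and heavier than what the paper needs.

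The paper closes this gap by a direct transplant of the proof of Proposition~\ref{prop:upper}, working in $\R^m$ itself. Setting $\eta=\tfrac{m-2}{4}$, one has $\tfrac{1}{m-1}\partial_t u^\eta=\tfrac{1}{u}\Delta u^\eta$, and since $f=b\abs{x}^{-4}$ gives a flat metric, $\Delta f^\eta=0$, so $(u^\eta-f^\eta)$ satisfies the same equation on $\R^m\setminus\{0\}$. Multiplying by a cutoff $\phi=\varphi\circ(r/A)$ that equals $1$ on $B_A(0)$ and vanishes outside $B_{2A}(0)$, one looks at the maximum of $(u^\eta-f^\eta)\phi$ at a fixed time. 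Two things conspire: (a)~because $g_0$ is a metric on all of $\R^m$, $u$ is continuous and bounded near the origin, while $f^\eta\to+\infty$ there, so $u^\eta-f^\eta\to-\infty$ and the maximum is automatically pushed away from the singularity of $f$; (b)~$\phi$ vanishes at spatial infinity, which is where the incomplete end actually sits, so the maximum lives in a compact region where the pointwise maximum-principle argument of Proposition~\ref{prop:upper} together with Lemma~\ref{lem:backdiff2} applies verbatim and yields $[(u^\eta-f^\eta)\phi](\cdot,t)\leq(\tfrac{Ct}{\eta A})^\eta$. Letting $A\to\infty$ gives $u(\cdot,t)\leq f$ with no capacity or puncture-removal argument at all. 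In short: what you flagged as the hard part is already solved by Proposition~\ref{prop:upper}, not by the area-comparison machinery of Theorem~\ref{thm:uniqueness}, and without the inversion; your proposal leaves that step open and points at a more complicated tool than is needed.
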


\begin{rem*}
Theorem \ref{thm:nonexistence} shows that the results about instantaneously complete Yamabe flow on hyperbolic space do not equally hold on arbitrary manifolds of dimension $m\geq3$. 
It contrasts with the $2$-dimensional case, where instantaneously complete Yamabe flows always exist \cite{Giesen2011}.  

For example, there does not exist an instantaneously complete Yamabe flow starting from the punctured unit sphere $(\dot{\Sp}^{m},g_{\Sp^{m}})$ in dimension $m\geq3$. 
Indeed, if $\pi\colon\dot{\Sp}^{m}\to\R^m$ is stereographic projection, then \(\pi_*g_{\Sp^{m}}=4{(1+\abs{x}^2)}^{-2}g_{\R^m}\) and Theorem \ref{thm:nonexistence} applies. 
\end{rem*}

\paragraph{Acknowledgements.}
The author is very grateful to the anonymous referee for his work and care including valuable comments and suggestions.


\section{Existence}

In this section, we prove Theorem \ref{thm:existence}. 
As a first step, short-time existence of a solution $u$ to equation \eqref{eqn:Yamabe-flow} for given $u(\cdot,0)=u_0>0$ on convex, bounded domains $\Omega\subset\Hyp$ with suitable boundary data is proven by applying the inverse function theorem on Banach spaces. 
Richard Hamilton \cite[\textsection\,IV.11]{Hamilton1975} uses the same technique to prove existence of solutions to the heat equation for manifolds. 
Local H\"older estimates then lead to a uniform existence time for all domains. 

In a second step we derive uniform gradient estimates, which do not depend on the domain. 
By considering an exhaustion of $\Hyp$ with convex, bounded domains, we obtain a locally uniformly bounded sequence of solutions which allows a subsequence converging to a solution of \eqref{eqn:Yamabe-flow} on all of $\Hyp$.

\subsection{Existence on bounded domains}

We denote the non-linear terms in equation \eqref{eqn:Yamabe-flow} by 
\begin{align*}
Q[u]\vcentcolon=(m-1)\Bigl(m+\frac{\Delta_{g_{\Hyp}} u}{u}+\frac{(m-6)}{4}\frac{\abs{\nabla u}_{g_{\Hyp}}^2}{u^2}\Bigr).
\end{align*}
Given a smooth, bounded domain $\Omega\subset\Hyp$ and $T>0$, we consider the problem
\begin{align}\label{eqn:pde}
\left\{
\begin{aligned}
\frac{\partial u}{\partial t}&=Q[u] 
&&\text{ in $\Omega\times[0,T]$, } \\
u&=\phi
&&\text{ on $\partial\Omega\times[0,T]$, }  \\[.5ex]
u&=u_0 
&&\text{ on $\Omega\times\{0\}$} 
\end{aligned}\right.
\end{align}
for given $0<u_0\in\Ct^{2,\alpha}(\overline{\Omega})$ and $\phi\in\Ct^{2,\alpha;1,\frac{\alpha}{2}}(\partial\Omega\times[0,T])$ satisfying $\phi(\cdot,t)\geq m(m-1)t$ and the first order compatibility conditions
\begin{align}\label{eqn:compatibility}
\left\{
\begin{aligned}
\phi(\cdot,0)&=u_0 
&&\text{ on $\partial\Omega$, }\\
\frac{\partial\phi}{\partial t}(\cdot,0)&=Q[u_0]
=-u_0\Rsc_{g_0}
&&\text{ on $\partial\Omega$. }
\end{aligned}\right.
\end{align} 
Such boundary data $\phi$ exist since $u_0$ and $\Rsc_{g_0}$ are bounded on the compact set $\partial\Omega$ and $u_0>0$. 
In section \ref{sec:localestimates} we choose $\phi$ explicitly. 

For small times $t>0$, we expect the solution $u$ to \eqref{eqn:pde} to be close to the solution $\tilde{u}$ of the linear problem  
\begin{align}\label{eqn:pde-linear-u}
\left\{\begin{aligned}
\frac{1}{m-1}\frac{\partial\tilde{u}}{\partial t}-\frac{\Delta_{g_{\Hyp}}\tilde{u}}{u_0}
-\frac{(m-6)}{4}\frac{\sk{\nabla\tilde{u},\nabla u_0}_{g_{\Hyp}}}{u_0^2}
&=m
&&\text{ in $\Omega\times[0,T]$, } \\
\tilde{u}&=\phi
&&\text{ on $\partial\Omega\times[0,T]$, }  \\[.5ex]
\tilde{u}&=u_0
&&\text{ on $\Omega\times\{0\}$. } 
\end{aligned}\right.
\end{align}
Since $\Omega$ is bounded and since $u_0>0$ in $\Hyp$, 
there exists some $\delta>0$ depending on $\Omega$ and $u_0$ such that $u_0\geq \delta$ in $\Omega$. 
Therefore, equation \eqref{eqn:pde-linear-u} is uniformly parabolic with regular coefficients and the compatibility conditions given in \eqref{eqn:compatibility} are satisfied.  
According to linear parabolic theory \cite[\textsection\,IV.5, Theorem 5.2]{Ladyzenskaja1967}, problem \eqref{eqn:pde-linear-u} has a unique solution $\tilde{u}\in\Ct^{2,\alpha;1,\frac{\alpha}{2}}(\overline{\Omega}\times[0,T])$. 
Since $u_0>0$ and $\phi(\cdot,t)\geq m(m-1)t$ for all $t\in[0,T]$, the parabolic maximum principle (Prop. \ref{prop:parabolicMP}) applied to $m(m-1)t-\tilde{u}(\cdot,t)$ implies 
\begin{align*}
\tilde{u}(\cdot,t)\geq m(m-1)t. 
\end{align*} 
In particular, $\tilde{u}\geq\varepsilon$ on $\Omega\times[0,T]$ for some $\varepsilon>0$ depending on $\Omega$ and $\tilde{u}$.

\begin{lem}[Short-time existence on bounded domains]
\label{lem:shorttimeexistence}
Let $\Omega\subset\Hyp$ be a smooth, convex, bounded domain. 
Then, there exists $T>0$ such that problem \eqref{eqn:pde} is solvable. 
\end{lem}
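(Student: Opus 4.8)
The plan is to solve \eqref{eqn:pde} by the inverse function theorem on Banach spaces of parabolic H\"older functions, using the solution $\tilde{u}$ of the linear problem \eqref{eqn:pde-linear-u} as reference point. It is enough to produce some existence time $T$ below an arbitrary fixed $T_0>0$, so we may solve \eqref{eqn:pde-linear-u} once on $\overline{\Omega}\times[0,T_0]$ and, for $T\le T_0$, work with the restriction of $\tilde{u}$. Writing $u=\tilde{u}+w$ turns \eqref{eqn:pde} into the problem of finding a $w$ in the closed subspace $\mathcal{V}\subset\Ct^{2,\alpha;1,\frac{\alpha}{2}}(\overline{\Omega}\times[0,T])$ of functions that vanish on $\partial\Omega\times[0,T]$ and on $\Omega\times\{0\}$ with $\mathcal{N}(w)=0$, where
\begin{align*}
\mathcal{N}(w)\vcentcolon=\partial_{t}(\tilde{u}+w)-Q[\tilde{u}+w].
\end{align*}
A zero of $\mathcal{N}$ in $\mathcal{V}$ indeed yields a solution $u=\tilde{u}+w$ of \eqref{eqn:pde}, because $u$ then inherits the prescribed boundary and initial data from $\tilde{u}$.

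Since $\tilde{u}\geq\varepsilon>0$ on $\overline{\Omega}\times[0,T]$, on the ball $\{w\in\mathcal{V}:\nm{w}<\tfrac{\varepsilon}{2}\}$ one has $\tilde{u}+w\geq\tfrac{\varepsilon}{2}$, so $\mathcal{N}$ is well defined and smooth there, $Q$ being a smooth function of $(u,\nabla u,\Delta_{g_{\Hyp}} u)$ for $u>0$. Its differential at $0$,
\begin{align*}
D\mathcal{N}(0)\,v=\partial_{t}v-\frac{(m-1)}{\tilde{u}}\Delta_{g_{\Hyp}} v+(\text{lower-order terms}),
\end{align*}
is a linear, uniformly parabolic operator whose coefficients are functions of $\tilde{u},\nabla\tilde{u},\Delta_{g_{\Hyp}}\tilde{u}$ and, since $\tilde{u}$ is bounded below, lie in $\Ct^{\alpha;\frac{\alpha}{2}}(\overline{\Omega}\times[0,T])$. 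By the linear theory \cite[\textsection\,IV.5, Theorem 5.2]{Ladyzenskaja1967}, applied with vanishing boundary and initial data so that the only relevant compatibility condition is that the right-hand side vanish on $\partial\Omega\times\{0\}$, the operator $D\mathcal{N}(0)$ is an isomorphism from $\mathcal{V}$ onto the closed subspace $Y=\{f\in\Ct^{\alpha;\frac{\alpha}{2}}(\overline{\Omega}\times[0,T]):f=0\text{ on }\partial\Omega\times\{0\}\}$.

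It remains to put $0$ into the image of $\mathcal{N}$. The defect $\mathcal{N}(0)=\partial_{t}\tilde{u}-Q[\tilde{u}]$ is computed by subtracting \eqref{eqn:pde-linear-u} from the definition of $Q$: using $\tilde{u}(\cdot,0)=u_0$ one sees that it is a finite sum of terms each carrying a factor $\tilde{u}-u_0$ or $\nabla(\tilde{u}-u_0)$ with coefficients bounded in $\Ct^{\alpha;\frac{\alpha}{2}}$. Hence $\mathcal{N}(0)$ vanishes on $\Omega\times\{0\}$, so $\mathcal{N}(0)\in Y$, and its $\Ct^{\alpha;\frac{\alpha}{2}}(\overline{\Omega}\times[0,T])$-norm is bounded uniformly in $T\le T_0$. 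Because $\mathcal{N}(0)$ vanishes at $t=0$, interpolation of the H\"older seminorms shows that $\nm{\mathcal{N}(0)}_{\Ct^{\beta;\frac{\beta}{2}}(\overline{\Omega}\times[0,T])}\to0$ as $T\searrow0$ for every fixed $\beta\in(0,\alpha)$; so one runs the argument with such a $\beta$ in place of $\alpha$ throughout and restores full regularity at the end (this is automatic when $u_0$ is more regular than $\Ct^{2,\alpha}$, as in the application). On the other hand $\nm{D\mathcal{N}(0)^{-1}}$ and the modulus of continuity of $w\mapsto D\mathcal{N}(w)$ near $0$ stay bounded as $T\searrow0$: the parabolic Schauder constant for the fixed domain is $T$-uniform for $T\le T_0$ (e.g.\ after extending data constantly in time past $T$), and products and quotients of parabolic H\"older functions with denominators bounded below obey $T$-free estimates. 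The quantitative inverse function theorem then provides, for $T$ small enough, a zero $w\in\mathcal{V}$ of $\mathcal{N}$, hence a solution of \eqref{eqn:pde}, whose regularity is afterwards upgraded to $\Ct^{2,\alpha;1,\frac{\alpha}{2}}$ by parabolic Schauder estimates.

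The crux is this last step: one must certify that \eqref{eqn:pde} is a genuine small perturbation of the linear problem \eqref{eqn:pde-linear-u} on a short time interval. This hinges on the delicate behaviour of parabolic H\"older norms near the initial time and on the $T$-uniformity of the Schauder constants, whereas checking that $\mathcal{N}$ is $\Ct^{1}$ and that $D\mathcal{N}(0)$ is an isomorphism is, by comparison, a routine consequence of the linear parabolic theory already cited.
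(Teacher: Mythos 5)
Your proposal sets up the same decomposition $u=\tilde u+w$, the same pair of Banach spaces with vanishing initial/boundary data, the same nonlinear map, and the same linear-parabolic argument that $D\mathcal N(0)$ is an isomorphism. The genuine divergence is in the final ``crux'' step, and you correctly flag it as the crux. The paper keeps the time horizon $T$ and the H\"older exponent $\alpha$ fixed: it invokes the qualitative inverse function theorem to obtain a neighbourhood $V\ni S(0)=f$ in the image, then replaces $f$ by $\theta f$ for a temporal cutoff $\theta$ that vanishes on $[0,\varepsilon]$, and proves by explicit interpolation-type H\"older estimates (using $f(\cdot,0)=0$ and $f\in\Ct^1$) that $\nm{f-\theta f}_Y\le C\varepsilon^{\beta-\alpha}\nm{f}_{\Ct^1}\to0$. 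The preimage $S^{-1}(\theta f)$ then solves \eqref{eqn:pde} on $\Omega\times[0,\varepsilon]$, where $\theta\equiv0$, and one simply redefines $T\vcentcolon=\varepsilon$. You instead shrink $T$ itself, show $\nm{\mathcal N(0)}_{\Ct^{\beta;\beta/2}}\to0$ for $\beta<\alpha$ as $T\searrow0$, and appeal to a quantitative inverse function theorem, which forces you to establish $T$-uniformity of $\nm{D\mathcal N(0)^{-1}}$ and of the modulus of continuity of $D\mathcal N$, and then to bootstrap the regularity of the solution from $\Ct^{2,\beta;1,\beta/2}$ back up to $\Ct^{2,\alpha;1,\alpha/2}$. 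Both routes are sound. What the paper's cutoff trick buys is that one never has to track how the Schauder constant behaves as $T\searrow0$, nor to lower the H\"older exponent and restore it afterwards; what your route buys is that the smallness is achieved directly at the level of $\mathcal N(0)$, without introducing a modified right-hand side and restricting the solution to a sub-interval, which some readers may find conceptually cleaner. Your assertions about $T$-uniform Schauder constants (via constant-in-time extension past $T$ and uniqueness) and about the bootstrap are correct but deserve slightly more justification than ``this is automatic''; in particular the bootstrap works already for $u_0\in\Ct^{2,\alpha}$, since a solution in $\Ct^{2,\beta;1,\beta/2}$ has coefficients $1/u$ and $\nabla u/u^2$ in $\Ct^{\alpha;\alpha/2}$, so the linear Schauder estimate with the prescribed $\Ct^{2,\alpha;1,\alpha/2}$ boundary data and compatible $\Ct^{2,\alpha}$ initial data upgrades the regularity in one step.
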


\begin{proof}
A solution $u$ to \eqref{eqn:pde} is of the form $u=\tilde{u}+v$, where $\tilde{u}$ solves \eqref{eqn:pde-linear-u} and
\begin{align}\label{eqn:pde-v}
\left\{\begin{aligned}
\frac{\partial v}{\partial t}
&=Q[\tilde{u}+v]-\frac{\partial \tilde{u}}{\partial t}
&&\text{ in $\Omega\times[0,T]$, } \\
v&=0
&&\text{ on $\partial\Omega\times[0,T]$, }  \\[.5ex]
v&=0
&&\text{ on $\Omega\times\{0\}$. } 
\end{aligned}\right.
\end{align}
Given the H\"older exponent $0<\alpha<1$, let 
\begin{alignat*}{3}
X&\vcentcolon=\{&v&\in\Ct^{2,\alpha;1,\frac{\alpha}{2}}(\overline{\Omega}\times[0,T])
\mid{}&v&=0\text{ on }(\Omega\times\{0\})\cup(\partial\Omega\times[0,T])\},
\\[1ex]
Y&\vcentcolon=\{&f&\in
\Ct^{0,\alpha;0,\frac{\alpha}{2}}(\overline{\Omega}\times[0,T]) 
\mid{}&f&=0\text{ on }\partial\Omega\times\{0\}\}.  
\end{alignat*}
The map 
\begin{align*}
S\colon X&\to Y
\\
v&\mapsto \tfrac{\partial}{\partial t}(\tilde{u}+v)-Q[\tilde{u}+v]. 
\end{align*}
then is well-defined because the compatibility conditions \eqref{eqn:compatibility} imply that at every $p\in\partial\Omega$ for every $v\in X$, we have 
\begin{align*}
(S v)(p,0)&=\Bigl(\frac{\partial\tilde{u}}{\partial t}-Q[\tilde{u}]\Bigr)(p,0)
=\Bigl(\frac{\partial\phi}{\partial t}(\cdot,0)-Q[u_0]\Bigr)(p)=0.
\end{align*}
The linearisation of $Q[\tilde{u}]$ around $\tilde{u}\in\Ct^{2,\alpha;1,\frac{\alpha}{2}}(\overline{\Omega}\times[0,T])$ defines the linear operator 
\begin{align*}
L(\tilde{u})&=(m-1)\Bigl(-\frac{\Delta_{g_{\Hyp}}\tilde{u}}{\tilde{u}^2}
-\frac{(m-6)}{2}\frac{\abs{\nabla\tilde{u}}_{g_{\Hyp}}^2}{\tilde{u}^3}+
\frac{(m-6)}{2\tilde{u}^2}\sk{\nabla\tilde{u},\nabla\,\cdot\,}_{g_{\Hyp}}
+\frac{\Delta_{g_{\Hyp}}}{\tilde{u}}
\Bigr). 
\end{align*}

\pagebreak[3]

The map $S$ is Gateaux differentiable at $0\in X$ with derivative
\begin{align*}
D S(0)\colon X&\to Y
\\
w&\mapsto \tfrac{\partial}{\partial t}w-L(\tilde{u})w.
\end{align*}
The mapping $u\mapsto L(u)$ is continuous near $\tilde{u}$ because $\tilde{u}$ is bounded away from zero. 
Hence, $DS(0)$ is in fact the Fr\'{e}chet-derivative of $S$ at $0\in X$. 
Moreover, the linear operator $\frac{\partial}{\partial t}-L(\tilde{u})$ is uniformly parabolic. 

Let $f\in Y$ be arbitrary. 
By definition, $0=f(\cdot,0)$ is satisfied on $\partial\Omega$ which is the first order compatibility condition for the linear parabolic problem 
\begin{align}\label{eqn:pde-linear}
\left\{\begin{aligned}
\frac{\partial w}{\partial t}-L(\tilde{u})w&=f
&&\text{ in $\Omega\times[0,T]$, } \\
w&=0 
&&\text{ on $\partial\Omega\times[0,T]$, } \\[.5ex]
w&=0
&&\text{ on $\Omega\times\{0\}$. } 
\end{aligned}\right.
\end{align}
As before, linear parabolic theory states that \eqref{eqn:pde-linear} has a unique solution $w\in X$. 
Therefore, the linear map $DS(0)\colon X\to Y$ is invertible. 

By the Inverse Function Theorem (Proposition \ref{prop:IFT}), $S$ is invertible in some neighbourhood $V\subset Y$ of $S(0)$. 
We claim that $V$ contains an element $e$ such that $e(\cdot,t)=0$ for $0\leq t\leq\varepsilon$ and sufficiently small $\varepsilon>0$. 
Let $f\vcentcolon=S(0)=\frac{\partial}{\partial t}\tilde{u}-Q[\tilde{u}]$ be fixed. 
Let $\theta\colon[0,T]\to[0,1]$ be a smooth cutoff function such that
\begin{align*}
\theta(t)&=\begin{cases}
0, &\text{ for } t\leq\varepsilon, \\ 
1, &\text{ for } t>2\varepsilon,
\end{cases}&
0&\leq\frac{\dd\theta}{\dd t}\leq\frac{3}{\varepsilon}.
\end{align*}
We claim $\theta f\in V$ for sufficiently small $\varepsilon>0$. 
Since $\tilde{u}$ is smooth in $\overline{\Omega}\times[0,T]$, we have $f\in\Ct^{1}(\overline{\Omega}\times[0,T])$. 
Since at $t=0$, we have
\begin{align}\label{est:f-hoelder1}
f(\cdot,0)&=\frac{\partial\tilde{u}}{\partial t}(\cdot,0)-Q[u_0]=0 \quad\text{ on $\overline{\Omega}$, }   
\end{align}
we can estimate 
\begin{align}\label{eqn:sC1}
\abs{f(\cdot,s)}=\abs{f(\cdot,s)-f(\cdot,0)}\leq s\nm{f}_{\Ct^1(\overline{\Omega}\times[0,T])}.
\end{align}
Let $t,s\in[0,T]$ such that $t>s$. 
If $s>2\varepsilon$, then $(f-\theta f)(\cdot,s)=(f-\theta f)(\cdot,t)=0$. 
Therefore, we may assume $s\leq2\varepsilon$. 
In this case we estimate
\begin{align}\notag
&\abs{(f-\theta f)(\cdot,t)-(f-\theta f)(\cdot,s)}
\\[.2ex]\notag
&\leq\abs{f(\cdot,t)-f(\cdot,s)}
+\abs{\theta f(\cdot,t)-\theta f(\cdot,s)}
\\[.2ex]\notag
&\leq
\bigl(1+\abs{\theta(t)}\bigr)\abs{f(\cdot,t)-f(\cdot,s)}
+\abs{f(\cdot,s)}\abs{\theta(t)-\theta(s)}
\\[.2ex]\notag
&\leq2\nm{f}_{\Ct^1}\abs{t-s}+s\nm{f}_{\Ct^1}\nm{\theta'}_{\Ct^0}\abs{t-s}
\\[.2ex]\notag
&\leq\bigl(2+s\tfrac{3}{\varepsilon}\bigr)\nm{f}_{\Ct^1}\abs{t-s}
\\[.2ex]\label{est:f-hoelder3}
&\leq8\nm{f}_{\Ct^1}\abs{t-s}.
\end{align}
Due to \eqref{est:f-hoelder1}, the special case $s=0$ reduces to 
\begin{align}\label{eqn:20190125-3}
\abs{(f-\theta f)(\cdot,t)}
&\leq8t\nm{f}_{\Ct^1}. 
\end{align}
Since the left-hand side of \eqref{eqn:20190125-3} vanishes for $t>2\varepsilon$, we have in fact  
\begin{align*} 
\nm{f-\theta f}_{\Ct^0}
&\leq16\varepsilon\nm{f}_{\Ct^1}. 
\end{align*}
If $\abs{t-s}<\varepsilon$, estimate \eqref{est:f-hoelder3} directly implies 
\begin{align*}
\abs{(f-\theta f)(\cdot,t)-(f-\theta f)(\cdot,s)}
&\leq 8\varepsilon^{1-\frac{\alpha}{2}}\nm{f}_{\Ct^1}\abs{t-s}^{\frac{\alpha}{2}}. 
\end{align*}
If $\abs{t-s}\geq\varepsilon$, we replace the estimate by 
\begin{align*}
\abs{(f-\theta f)(\cdot,t)-(f-\theta f)(\cdot,s)}
&\leq2\nm{f-\theta f}_{\Ct^0}
\\[.75ex]
&\leq32\varepsilon \nm{f}_{\Ct^1}
\\[.75ex]
&\leq32\varepsilon^{1-\frac{\alpha}{2}}\nm{f}_{\Ct^1}\abs{t-s}^{\frac{\alpha}{2}}.
\end{align*}
Therefore, $[f-\theta f]_{\frac{\alpha}{2},t}\leq32\varepsilon^{1-\frac{\alpha}{2}}\nm{f}_{\Ct^1}$. 

For the spatial H\"older seminorm, we obtain a similar estimate from \eqref{eqn:sC1} and 
\begin{align*}
&\abs{(f-\theta f)(x,t)-(f-\theta f)(y,t)}
\\
&\leq \abs{1-\theta(t)}\abs{f(x,t)-f(y,t)}^{\alpha}\abs{f(x,t)-f(y,t)}^{1-\alpha}
\\
&\leq \nm{f}_{\Ct^1}^{\alpha}\,d(x,y)^{\alpha}\bigl(4\varepsilon\nm{f}_{\Ct^1}\bigr)^{1-\alpha}
=(4\varepsilon)^{1-\alpha}\nm{f}_{\Ct^1}\,d(x,y)^{\alpha},
\end{align*}
where $d(x,y)$ denotes the Riemannian distance between $x$ and $y$ in $(\Hyp,g_{\Hyp})$ and where convexity of $\Omega$ is used. 
To conclude, $\nm{f-\theta f}_{Y}\leq C\varepsilon^{\beta-\alpha}\nm{f}_{\Ct^1}$. 
Thus, $\theta f$ belongs to the neighbourhood $V$ of $f$ if  $\varepsilon>0$ is sufficiently small. 
By construction, $S^{-1}(\theta f)$ is a solution to \eqref{eqn:pde-v} in $\Omega\times[0,\varepsilon]$. 
Redefining $T=\varepsilon>0$, we obtain the claim.
\end{proof}

\subsection{Local estimates}\label{sec:localestimates}

Let $\Omega\subset\Hyp$ be a smooth, convex, bounded domain. 
Let $u\in\Ct^{2,\alpha;1,\frac{\alpha}{2}}(\overline{\Omega}\times[0,T])$ be a solution to the nonlinear problem~\eqref{eqn:pde} as determined in \mbox{Lemma \ref{lem:shorttimeexistence}}.  
Restricting the hyperbolic background metric $g_{\Hyp}$ to $\Omega$, we obtain the Yamabe flow $g(t)=u(\cdot,t)g_{\Hyp}$ on $\Omega$ with initial metric $g_0=u_0g_{\Hyp}$.
In order to estimate the scalar curvature $\Rsc=\Rsc_{g(t)}$ of $(\Omega,g(t))$ by means of the maximum principle, we will assume $u_0\in\Ct^{4,\alpha}(\overline{\Omega})$ such that $\Rsc_{g_0}\in\Ct^{2,\alpha}(\overline{\Omega})$ and specify the parabolic boundary data $\phi$ explicitly. 
We define the function $\rel\in\Ct^{2,\alpha}(\overline{\Omega})$ by
\begin{align*}
\rel(x)&\vcentcolon=-u_0(x)\,\Rsc_{g_0}(x)-m(m-1),
\end{align*}
which is the relative initial velocity of the Yamabe flow in question compared to the ``big bang''-Yamabe flow $m(m-1)tg_{\Hyp}$. 
Defining the constant 
\begin{align}\label{eqn:constantkappa}
\kappa&\vcentcolon=\max_{x\in\overline{\Omega}}
\Bigl\{\abs{\Rsc_{g_{0}}(x)},\frac{m(m-1)}{u_0(x)}\Bigr\}.
\end{align}
we have $\abs{\rel}\leq 2u_0\kappa$. 
For $s\geq0$, let
\begin{align*}
\psi(s)&\vcentcolon=\tfrac{1}{3}+\tfrac{1}{3}(s-1)^3\chi_{[0,1]}(s),
\end{align*}
where $\chi_{[0,1]}$ denotes the characteristic function of the interval $[0,1]$. 
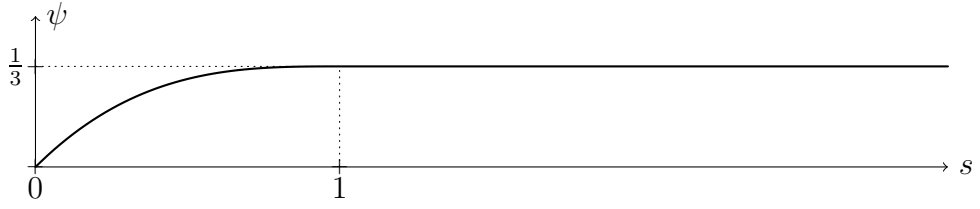
\begin{figure}[htp]
\centering
\begin{tikzpicture}[line join=round,line cap=round,scale=4]
\draw[thick,domain=0:1, smooth, variable=\x]
plot ({\x},{(1+(\x-1)^3)/3})
--(3,1/3);
\draw(0,0)node{$\scriptstyle+$}node[below]{$0$};
\draw[dotted](1,0)node{$\scriptstyle+$}node[below]{$1$}
|-(0,1/3)node{$\scriptstyle+$}node[left]{$\frac{1}{3}$};
\draw[->](0,0)--(3,0)node[right]{$s$};
\draw[->](0,0)--(0,1/2)node[right]{$\psi$};
\end{tikzpicture}
\caption{Graph of the function $\psi$.}
\end{figure}

As parabolic boundary data for problem \eqref{eqn:pde} we choose 
\begin{align}\label{eqn:boundarydata}
\phi(x,t)&=u_0(x)+m(m-1)t + \rel(x)\frac{\psi(\kappa t)}{\kappa}
\end{align}
which satisfies the desired inequalities 
\begin{align}\label{eqn:estonphi}
\tfrac{1}{3}u_0+m(m-1)t\leq \phi(\cdot,t)\leq\tfrac{5}{3}u_0+m(m-1)t
\end{align}
and the first order compatibility conditions \eqref{eqn:compatibility} by construction, i.e. $\phi(\cdot,0)=u_0$ and
\begin{align*}
\frac{\partial\phi}{\partial t}(x,t)&=m(m-1) + \rel(x)\psi'(\kappa t), 
&
\frac{\partial\phi}{\partial t}(x,0)&=-u_0(x)\,\Rsc_{g_0}(x).
\end{align*}
Moreover, we have $\phi\in\Ct^{2,\alpha;1,\frac{\alpha}{2}}(\overline{\Omega}\times[0,T])$ because $u_0\in\Ct^{4,\alpha}(\overline{\Omega})$ and $\Rsc_{g_0}\in\Ct^{2,\alpha}(\overline{\Omega})$ and since the derivatives 
\begin{align*}
\psi'(s)&=(s-1)^2\chi_{[0,1]}(s),
\\
\psi''(s)&=2(s-1)\chi_{[0,1]}(s)
\end{align*}
are continuous at $s=1$ and bounded in $\Interval{0,\infty}$. 
We observe that for any $s\in\Interval{0,\infty}$
\begin{align}
\abs[\big]{\psi(s)-s\,\psi'(s)}
&=\abs[\big]{\tfrac{1}{3}+\bigl(\tfrac{1}{3}(s-1)^3-s(s-1)^2\bigr)\chi_{[0,1]}(s)}
\notag\\\label{eqn:psionethird}
&=\abs[\big]{\tfrac{1}{3}-\tfrac{1}{3}(s-1)^2(1+2s)\chi_{[0,1]}(s)}
\leq\frac{1}{3}.
\end{align}
Given $\varepsilon>0$ to be chosen, the estimates \eqref{eqn:psionethird}, $\abs{\psi'}\leq1$ and $\abs{\rel}\leq 2u_0\kappa$ imply  
\begin{align}\notag
&\phi(\cdot,t)-(t+\varepsilon)\,\frac{\partial\phi}{\partial t}(\cdot,t)
\\\notag
&=u_0+\rel\frac{\psi(\kappa t)}{\kappa}
-\varepsilon m(m-1)
-(t+\varepsilon)\rel\psi'(\kappa t)
\\\notag
&=u_0-\varepsilon m(m-1)
+\frac{\rel}{\kappa}\bigl(\psi(\kappa t)-\kappa t\,\psi'(\kappa t)\bigr)
-\varepsilon\rel\psi'(\kappa t)
\\\notag
&\geq u_0-\varepsilon m(m-1)-\frac{\abs{\rel}}{3\kappa}-\varepsilon\abs{\rel} 
\\\notag
&\geq\frac{u_0}{3}-\varepsilon\bigl(m(m-1)+2u_0\kappa\bigr)
\geq0  
\end{align}
if $\varepsilon>0$ is chosen sufficiently small depending only on $u_0$, $\Omega$ and $m$. 
Hence, 
\begin{align}
\label{boundarycurvature1}
-\frac{1}{\phi}\frac{\partial\phi}{\partial t}&\geq-\frac{1}{t+\varepsilon}.
\end{align}
Let $0<K_0\leq\kappa$ be a constant such that $\Rsc_{g_0}\leq K_0$ in $\overline{\Omega}$. For $0\leq t<\tfrac{1}{K_0}$ we have 
\begin{align}\notag
&K_0\phi(\cdot,t)+(1-K_0t)\frac{\partial\phi}{\partial t}(\cdot,t)
\\\notag
&=K_0 u_0+\frac{K_0}{\kappa}\rel\psi(\kappa t)
+m(m-1)+(1-K_0 t)\rel\psi'(\kappa t)
\\
&=K_0 u_0 +m(m-1)+\Bigl(\frac{K_0}{\kappa}\psi(\kappa t)+(1-K_0 t)\psi'(\kappa t)\Bigr)\rel.
\label{eqn:phiest0}
\end{align}
To estimate \eqref{eqn:phiest0} we set $a\vcentcolon=\frac{K_0}{\kappa}\in[0,1]$ and $s=\kappa t$ and observe that the expression
\begin{align*}
\Xi&=a\psi(s)+(1-as)\psi'(s)
\\
&=\tfrac{a}{3}+\bigl(\tfrac{a}{3}(s-1)^3+(1-as)(s-1)^2\bigr)\chi_{[0,1](s)}
\\
&=\tfrac{a}{3}+\bigl(\tfrac{2a}{3}(1-s)^3+(1-a)(1-s)^2\bigr)\chi_{[0,1](s)}
\end{align*}
is decreasing in $s\in[0,1]$ as long as $a\leq1$ and therefore bounded from above by $a\psi(0)+\psi'(0)=1$ and from below by $\frac{a}{3}\geq0$. 
Substituting the term $0\leq\Xi\leq 1$ in \eqref{eqn:phiest0}, we conclude
\begin{align*} 
K_0\phi(\cdot,t)+(1-K_0t)\frac{\partial\phi}{\partial t}(\cdot,t)
&=K_0 u_0 +m(m-1)+\Xi\rel
\\
&=(K_0-\Xi\,\Rsc_{g_0})u_0+m(m-1)(1-\Xi)\geq0.
\end{align*}
For every $t\in\Interval{0,\tfrac{1}{K_0}}$ we obtain  
\begin{align}
\label{boundarycurvature2}
-\frac{1}{\phi}\frac{\partial\phi}{\partial t}&\leq\frac{K_0}{1-K_0 t}.
\end{align}
Finally, for all $t\geq\frac{1}{\kappa}$ we have $\psi'(\kappa t)=0$ and thus $\frac{\partial}{\partial t}\phi(\cdot,t)=m(m-1)$. 
Some of the previous estimates are illustrated in Figure \ref{fig:boundarydata}. 
\begin{figure}[htbp]
\pgfmathsetmacro{\globalscale}{7}
\pgfmathsetmacro{\yscl}{0.2}
\pgfmathsetmacro{\cc}{3*(3-1)}
\pgfmathsetmacro{\uo}{1.5}
\pgfmathsetmacro{\Ro}{4}
\pgfmathsetmacro{\kk}{max(\cc/\uo,\Ro,6)}
\pgfmathsetmacro{\domain}{2/3}
\begin{center}
\begin{tikzpicture}[
line cap=round,line join=round,
scale=\globalscale,yscale=\yscl,baseline={(0,0)},
declare function={
psi(\x)=(1+(\x-1)*(\x-1)*(\x-1))/3;
}]
\draw[thick,domain=0:1/\kk, samples=50 , variable=\t]
plot ({\t},{\uo+\cc*\t-(\uo*\Ro+\cc)*psi(\kk*\t)/\kk})
--++(\domain-1/\kk,\cc*\domain-\cc/\kk);
\draw[densely dashed,yscale=1/\yscl](0,0)--(\domain,\domain*\cc*\yscl)
node[sloped,very near end,below]{$\scriptstyle m(m-1)t$};
\draw[densely dotted](1/\kk,0)--++(0,{\uo+\cc/\kk-(\uo*\Ro+\cc)/(3*\kk)});
\draw[densely dotted](0,\uo)--+($-0.1*(1,-\uo*\Ro)$);
\draw[densely dotted](0,\uo)--+($ 0.1*(1,-\uo*\Ro)$);
\draw[->](0,0)--(\domain,0)node[right]{$t$};
\draw[->](0,0)--(0,\uo+\domain*\cc)node[right]{$\phi$};
\draw(0,0)node{$\scriptstyle+$}node[below]{$0$};
\draw(1/\kk,0)node{$\scriptstyle+$}node[below]{$\frac{1}{\kappa}$};
\draw(0,\uo)node{$\scriptstyle+$}node[left]{$u_0$};
\pgfresetboundingbox
\useasboundingbox(-0.2,-0.5)rectangle(\domain+0.1,0.5+\uo+\domain*\cc);
\end{tikzpicture}
\hfill
\pgfmathsetmacro{\Ro}{-8}
\pgfmathsetmacro{\kk}{max(\cc/\uo,\Ro,6)}
\begin{tikzpicture}[
line cap=round,line join=round,
scale=\globalscale,yscale=\yscl,baseline={(0,0)},
declare function={
psi(\x)=(1+(\x-1)*(\x-1)*(\x-1))/3;
}]
\draw[thick,domain=0:1/\kk , variable=\t]
plot ({\t},{\uo+\cc*\t-(\uo*\Ro+\cc)*psi(\kk*\t)/\kk})
--++(\domain-1/\kk,\cc*\domain-\cc/\kk);
\draw[densely dashed,yscale=1/\yscl](0,0)--(\domain,\domain*\cc*\yscl)
node[sloped,very near end,below]{$\scriptstyle m(m-1)t$};
\draw[densely dotted](1/\kk,0)--++(0,{\uo+\cc/\kk-(\uo*\Ro+\cc)/(3*\kk)});
\draw[densely dotted](0,\uo)--+($-0.1*(1,-\uo*\Ro)$);
\draw[densely dotted](0,\uo)--+($ 0.1*(1,-\uo*\Ro)$);
\draw[->](0,0)--(\domain,0)node[right]{$t$};
\draw[->](0,0)--(0,\uo+\domain*\cc)node[right]{$\phi$};
\draw(0,0)node{$\scriptstyle+$}node[below]{$0$};
\draw(1/\kk,0)node{$\scriptstyle+$}node[below]{$\frac{1}{\kappa}$};
\draw(0,\uo)node{$\scriptstyle+$}node[left]{$u_0$};
\pgfresetboundingbox
\useasboundingbox(-0.2,-0.5)rectangle(\domain+0.1,0.5+\uo+\domain*\cc);
\end{tikzpicture} 
\end{center}
 
\begin{center}
\begin{tikzpicture}[
line cap=round,line join=round,
scale=\globalscale,yscale={\yscl/2},
declare function={
psi(\x)=(1+(\x-1)*(\x-1)*(\x-1))/3;
dpsi(\x)=(\x-1)*(\x-1);
}]
\draw[thick,domain=0:1/\kk, samples=50 , variable=\t]
plot ({\t},{-(\cc-(\uo*\Ro+\cc)*dpsi(\kk*\t))/
(\uo+\cc*\t-(\uo*\Ro+\cc)*psi(\kk*\t)/\kk)});
\draw[thick,domain=1/\kk:\domain, samples=50 , variable=\t]
plot ({\t},{-(\cc)/(\uo+\cc*\t-(\uo*\Ro+\cc)*(1/3)/\kk)});
\draw[densely dashed, domain=1/\kk:\domain, samples=50 , variable=\t]plot ({\t},{-1/\t});
\draw[densely dotted](1/\kk,0)--++(0,{-\kk});
\draw[->](0,0)--(\domain,0)node[right]{$t$};
\draw[->](0,-\kk)--(0,\kk-1)node[right]{$\Rsc|_{\partial\Omega}=-\frac{1}{\phi}\frac{\partial\phi}{\partial t}$};
\draw(0,0)node{$\scriptstyle+$};
\draw(1/\kk,0)node{$\scriptstyle+$}node[below=1ex,fill=white,inner sep=1pt]{$\frac{1}{\kappa}$};
\draw(0,\Ro)node{$\scriptstyle+$}node[left]{$\Rsc_{g_0}$};
\draw(\domain,-1/\domain)node[below left=1ex]{$-\frac{1}{t}$};
\pgfresetboundingbox
\useasboundingbox(-0.2,-\kk)rectangle(\domain+0.1,\kk);
\end{tikzpicture}
\hfill
\pgfmathsetmacro{\Ro}{-8}
\pgfmathsetmacro{\kk}{max(\cc/\uo,\Ro,6)}
\begin{tikzpicture}[
line cap=round,line join=round,
scale=\globalscale,yscale={\yscl/2},
declare function={
psi(\x)=(1+(\x-1)*(\x-1)*(\x-1))/3;
dpsi(\x)=(\x-1)*(\x-1);
}]
\draw[thick,domain=0:1/\kk, samples=50 , variable=\t]
plot ({\t},{-(\cc-(\uo*\Ro+\cc)*dpsi(\kk*\t))/
(\uo+\cc*\t-(\uo*\Ro+\cc)*psi(\kk*\t)/\kk)});
\draw[thick,domain=1/\kk:\domain, samples=50 , variable=\t]
plot ({\t},{-(\cc)/(\uo+\cc*\t-(\uo*\Ro+\cc)*(1/3)/\kk)});
\draw[densely dashed, domain=1/(\kk+3):\domain, samples=50 , variable=\t]plot ({\t},{-1/\t});
\draw[densely dotted](1/\kk,0)--++(0,{-\kk});
\draw[->](0,0)--(\domain,0)node[right]{$t$};
\draw[->](0,-\kk-3)--(0,\kk-4)node[right]{$\Rsc|_{\partial\Omega}=-\frac{1}{\phi}\frac{\partial\phi}{\partial t}$};
\draw(1/\kk,0)node{$\scriptstyle+$}node[below=1ex,fill=white,inner sep=1pt]{$\frac{1}{\kappa}$};
\draw(0,0)node{$\scriptstyle+$};
\draw(0,\Ro)node{$\scriptstyle+$}node[left]{$\Rsc_{g_0}$};
\draw(\domain,-1/\domain)node[below left=1ex]{$-\frac{1}{t}$};
\pgfresetboundingbox
\useasboundingbox(-0.2,-\kk-3)rectangle(\domain+0.1,\kk-3);
\end{tikzpicture}
\end{center}
\caption{The evolution of $\phi$ (above) and $\Rsc$ (below) on $\partial\Omega$ for different initial values.}
\label{fig:boundarydata}
\end{figure}
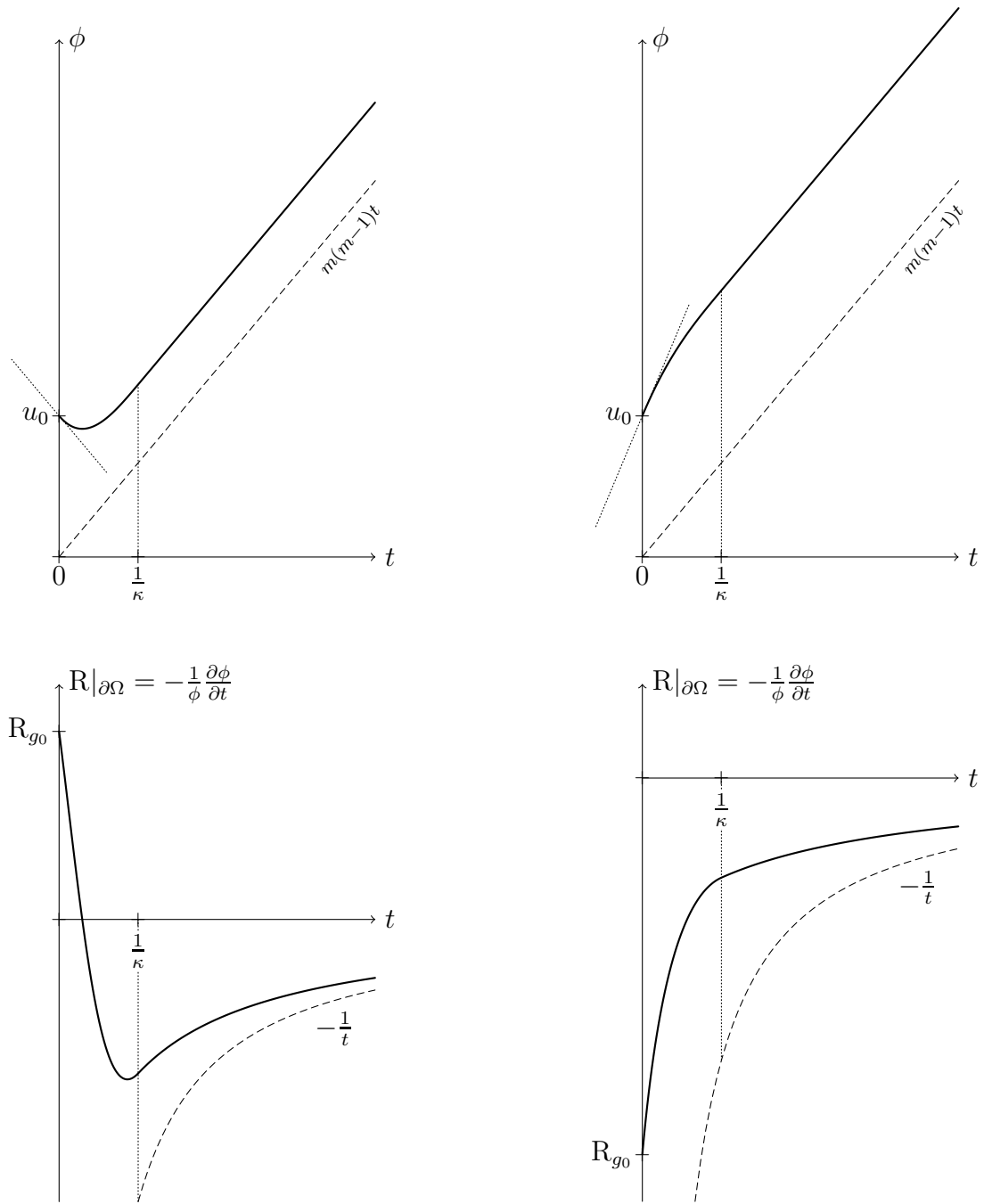

\begin{lem}[Scalar curvature bound]\label{lem:Rsc}
Let $0\leq K_0\in\R$ be a constant such that $\Rsc_{g_0}\leq K_0$ in ${\Omega}$. 
Let $T_0\vcentcolon=(\max\{K_0,\frac{1}{T}\})^{-1}$. 
Then, there exists $\varepsilon>0$ depending only on $u_0$, $\Omega$ and $m$ such that for all $(x,t)\in\Omega\times\Interval{0,T_0}$  
\begin{align*}
-\frac{1}{t+\varepsilon}\leq\Rsc(x,t)\leq \frac{K_0}{1-K_0\,t}. 
\end{align*}
\end{lem}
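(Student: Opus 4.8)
The plan is to obtain both bounds from the evolution equation for the scalar curvature $\Rsc=\Rsc_{g(t)}$ of $g(t)=u(\cdot,t)g_{\Hyp}$, using the two functions $\overline{R}(t):=\frac{K_0}{1-K_0t}$ and $\underline{R}(t):=-\frac{1}{t+\varepsilon}$ as barriers and invoking the parabolic maximum principle (Proposition \ref{prop:parabolicMP}). First I would record the regularity and the evolution equation: by interior parabolic regularity the solution $u$ of \eqref{eqn:pde} is smooth in $\Omega\times\intervaL{0,T}$, and since $u\in\Ct^{2,\alpha;1,\frac{\alpha}{2}}(\overline{\Omega}\times[0,T])$ with $u>0$, the function $\Rsc=-\frac{1}{u}\frac{\partial u}{\partial t}$ is continuous on $\overline{\Omega}\times[0,T]$, smooth in the interior, and satisfies there the standard identity
\begin{align*}
\frac{\partial\Rsc}{\partial t}=(m-1)\Delta_{g(t)}\Rsc+\Rsc^2 \qquad\text{in }\Omega\times\intervaL{0,T}.
\end{align*}
Both $\overline{R}$ and $\underline{R}$ solve the ordinary differential equation $w'=w^2$ and do not depend on the space variable, hence they solve $\frac{\partial w}{\partial t}=(m-1)\Delta_{g(t)}w+w^2$ as well.

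Next I would set up the comparison on the parabolic boundary of $\Omega\times\Interval{0,T_0}$. Since $\kappa\geq\abs{\Rsc_{g_0}}\geq\Rsc_{g_0}$ and since $K\mapsto\frac{K}{1-Kt}$ is increasing, we may assume $K_0\leq\kappa$ after replacing $K_0$ by $\min\{K_0,\kappa\}$ (this only enlarges $T_0$ and strengthens the claim). I further shrink the constant $\varepsilon$ used in \eqref{boundarycurvature1} — keeping it dependent only on $u_0$, $\Omega$ and $m$ — so that in addition $\varepsilon\kappa\leq1$; shrinking $\varepsilon$ preserves the inequality \eqref{boundarycurvature1}. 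On $\overline{\Omega}\times\{0\}$ we have $\Rsc=\Rsc_{g_0}$ and thus $\underline{R}(0)=-\tfrac{1}{\varepsilon}\leq-\kappa\leq\Rsc_{g_0}\leq K_0=\overline{R}(0)$. On $\partial\Omega\times\Interval{0,T_0}$, the relations $u=\phi$ and $\frac{\partial u}{\partial t}=Q[u]=-u\Rsc$ give $\Rsc=-\frac{1}{\phi}\frac{\partial\phi}{\partial t}$, so \eqref{boundarycurvature1} and \eqref{boundarycurvature2} (both valid on $\Interval{0,T_0}$, since $K_0T_0\leq1$) yield $\underline{R}(t)\leq\Rsc\leq\overline{R}(t)$.

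Finally I would run the maximum principle on each slab $\Omega\times[0,T']$ with $0<T'<T_0$. There $\Rsc$ is bounded, being continuous on a compact set, and $\overline{R}$ is bounded because $T'<T_0$ and $K_0T_0\leq1$; hence $w:=\Rsc-\overline{R}$ solves the linear parabolic equation $\frac{\partial w}{\partial t}=(m-1)\Delta_{g(t)}w+(\Rsc+\overline{R})\,w$ with bounded zeroth-order coefficient, and $w\leq0$ on the parabolic boundary, so Proposition \ref{prop:parabolicMP} gives $w\leq0$ on $\Omega\times[0,T']$. In the same way $\underline{R}-\Rsc\leq0$ on $\Omega\times[0,T']$, with zeroth-order coefficient $\Rsc+\underline{R}$. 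Letting $T'\nearrow T_0$ yields the claim. The only genuinely non-routine point is the quadratic term $\Rsc^2$, which blocks a black-box comparison principle; it is handled exactly by the two observations above — that the chosen barriers satisfy the very same ODE $w'=w^2$, and that subtracting a barrier turns the equation into a linear one whose zeroth-order coefficient remains bounded on every slab compactly contained in $\Interval{0,T_0}$, where $\overline{R}$ has not yet become singular.
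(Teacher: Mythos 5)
Your proof is correct and follows essentially the same route as the paper's: express $\Rsc$ on $\partial\Omega$ via the prescribed boundary data $\phi$, use the boundary curvature estimates \eqref{boundarycurvature1} and \eqref{boundarycurvature2}, observe that the two barriers solve $w'=w^2$ so that the difference $\Rsc - w$ satisfies a linear parabolic equation with zeroth-order coefficient $\Rsc + w$ bounded on every slab $\Omega\times[0,T']$ with $T'<T_0$, and apply Proposition \ref{prop:parabolicMP} before letting $T'\nearrow T_0$. The only genuine additions you make are small refinements the paper glosses over: you explicitly justify that one may assume $K_0\leq\kappa$ (since the derivation of \eqref{boundarycurvature2} used $K_0/\kappa\in[0,1]$, while the lemma statement allows arbitrary $K_0\geq0$), you note the interior regularity needed to run the evolution equation for $\Rsc$ classically, and you make the choice of $\varepsilon$ quantitative via $\varepsilon\kappa\leq1$ — all harmless and correct.
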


\begin{proof}
In $\Omega\times[0,T]$ we can express the scalar curvature in the form $\Rsc=-\frac{1}{u}\frac{\partial u}{\partial t}$. 
In particular, on $\partial\Omega\times[0,T]$ we have 
\begin{align*} 
{\Rsc|}_{\partial\Omega\times[0,T]}
=-\frac{1}{\phi}\frac{\partial\phi}{\partial t},
\end{align*}
where the right hand side satisfies the lower bound \eqref{boundarycurvature1} and the upper bound \eqref{boundarycurvature2}.
Scalar curvature evolves by the equation (see \cite[Lemma 2.2]{Chow1992})
\begin{align}
\label{eqn:evoR}
\tfrac{\partial}{\partial t}\Rsc=(m-1)\Delta_{g(t)}\Rsc+\Rsc^2
\quad\text{ in $\Omega\times[0,T]$.}
\end{align}
Let $w(t)=\frac{K_0}{1-K_0 t}$ for $t\in\Interval{0,T_0}$ with $\frac{\dd w}{\dd t}=w^2$. 
Then $\Rsc\leq w$ on $(\Omega\times\{0\})\cup(\partial\Omega\times\Interval{0,T_0})$ by \eqref{boundarycurvature2}. 
Moreover, we have 
\begin{align*}
\tfrac{\partial}{\partial t}(\Rsc-w)
-(m-1)\Delta_{g(t)}(\Rsc-w)-(\Rsc+w)(\Rsc-w)=0
\quad\text{ in $\Omega\times\Interval{0,T_0}$.}
\end{align*}
Let $T_1<T_0\leq T$ be fixed. 
Then, $(\Rsc+w)$ is bounded from above in $\Omega\times[0,T_1]$ and the inequality $\Rsc\leq w$ in $\Omega\times[0,T_1]$ follows from the parabolic maximum principle (Proposition \ref{prop:parabolicMP}). 
Since $T_1<T_0$ is arbitrary, we have $\Rsc\leq w$ in $\Omega\times\Interval{0,T_0}$.

Let $\varepsilon>0$ be sufficiently small depending only on $u_0$, $\Omega$ and $m$, such that  \eqref{boundarycurvature1} holds and such that additionally, $\Rsc_{g_0}\geq-\frac{1}{\varepsilon}$ in $\Omega$. 
In the argument above we replace $w(t)$ by $-\frac{1}{t+\varepsilon}$ and conclude $\Rsc\geq-\frac{1}{t+\varepsilon}$ analogously.  
\end{proof}

\begin{lem}[Upper and lower bound]\label{lem:pointwise}
Let $0<u\in\Ct^{2,\alpha;1,\frac{\alpha}{2}}(\overline{\Omega}\times[0,T])$ be a solution to problem \eqref{eqn:pde} with boundary data \eqref{eqn:boundarydata} and bounded initial data $u_0>0$. 
Then, for every $0\leq t\leq T$, 
\begin{align*}
m(m-1)t+\tfrac{1}{3}\min_{\overline{\Omega}}u_0\leq u(\cdot,t)\leq m(m-1)t+\tfrac{5}{3}\max_{\overline{\Omega}}u_0.
\end{align*}
\end{lem}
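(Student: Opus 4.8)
The plan is to bound $u$ from above and below by the spatially constant ``big bang'' solutions $w_\pm(x,t):=m(m-1)t+c_\pm$, where $c_-:=\tfrac13\min_{\overline\Omega}u_0$ and $c_+:=\tfrac53\max_{\overline\Omega}u_0$, by means of the parabolic maximum principle. First I would note that, since $w_\pm$ is independent of $x$, one has $\Delta_{g_\Hyp}w_\pm=0$ and $\nabla w_\pm=0$, hence $Q[w_\pm]=m(m-1)=\partial_t w_\pm$; thus $w_\pm$ itself solves the differential equation in \eqref{eqn:pde}. On the parabolic boundary $(\overline\Omega\times\{0\})\cup(\partial\Omega\times[0,T])$ the inequalities $w_-\le u\le w_+$ hold: at $t=0$ they read $\tfrac13\min_{\overline\Omega}u_0\le u_0\le\tfrac53\max_{\overline\Omega}u_0$, which is immediate from $u_0>0$, while on $\partial\Omega\times[0,T]$ they follow from $u=\phi$ together with \eqref{eqn:estonphi}, using $\tfrac13u_0(x)\ge c_-$ and $\tfrac53u_0(x)\le c_+$.

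To propagate these inequalities into $\Omega\times[0,T]$ I would linearise $Q$ along the segments joining $w_\pm$ to $u$. Since $u>0$ is continuous on the compact set $\overline\Omega\times[0,T]$, there is $\delta>0$ with $u\ge\delta$, and because $c_\pm>0$ the convex combinations $u_\tau:=(1-\tau)w_\pm+\tau u$ satisfy $u_\tau\ge\min\{\delta,c_\pm\}>0$ for all $\tau\in[0,1]$. Writing $d_\pm:=u-w_\pm$ and denoting by $L$ the linearisation of $Q$ computed in the proof of Lemma~\ref{lem:shorttimeexistence}, the fundamental theorem of calculus gives
\begin{align*}
\partial_t d_\pm=Q[u]-Q[w_\pm]=\Bigl(\int_0^1 L(u_\tau)\,\dd\tau\Bigr)d_\pm=:\mathcal{L}d_\pm,
\end{align*}
where $\mathcal{L}$ is a linear operator with principal part $\bigl(\int_0^1\tfrac{m-1}{u_\tau}\,\dd\tau\bigr)\Delta_{g_\Hyp}$, hence uniformly elliptic, and with first- and zeroth-order coefficients that are bounded on $\overline\Omega\times[0,T]$ because $u_\tau$ is bounded away from $0$ and $u\in\Ct^{2,\alpha;1,\frac{\alpha}{2}}(\overline\Omega\times[0,T])$ has bounded first and second spatial derivatives.

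It then remains to observe that $\partial_t d_+-\mathcal{L}d_+=0$ with $d_+\le0$ on the parabolic boundary, and, by linearity of $\mathcal{L}$, that $\partial_t(-d_-)-\mathcal{L}(-d_-)=0$ with $-d_-\le0$ there; since the zeroth-order coefficient of $\mathcal{L}$ is bounded from above, Proposition~\ref{prop:parabolicMP} yields $d_+\le0$ and $-d_-\le0$, that is $w_-\le u\le w_+$ on $\Omega\times[0,T]$, which is the claim. The point requiring care is the passage from the quasilinear difference $Q[u]-Q[w_\pm]$ to an honest linear parabolic operator acting on $d_\pm$ with bounded coefficients; this is exactly where the uniform positive lower bound on the interpolants $u_\tau$, and hence the hypothesis that $u$ stays positive on the compact parabolic cylinder, is used.
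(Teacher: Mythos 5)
Your proof is correct, but it takes a genuinely different route from the paper. The paper exploits a happy algebraic cancellation: writing $w=u-m(m-1)t-c$ and substituting into \eqref{eqn:Yamabe-flow}, the constant $m$ and the constant $c$ simply drop out (because $\nabla w=\nabla u$ and $\Delta w=\Delta u$), leaving the \emph{exact} linear parabolic equation
\begin{align*}
\frac{1}{m-1}\frac{\partial w}{\partial t}
-\frac{\Delta_{g_{\Hyp}} w}{u}-\frac{(m-6)}{4}\frac{\sk{\nabla u,\nabla w}_{g_{\Hyp}}}{u^2}=0,
\end{align*}
with no zeroth-order term at all. No interpolation is needed, and one applies Proposition~\ref{prop:parabolicMP} directly. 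You instead note that $w_\pm$ are themselves solutions of the quasilinear PDE and then linearise $Q$ along the segment from $w_\pm$ to $u$, producing a linear operator $\mathcal{L}=\int_0^1 L(u_\tau)\,\dd\tau$ acting on $d_\pm=u-w_\pm$. This is the standard comparison-of-solutions machinery (in fact the same idea as in Lemma~\ref{lem:Uniquenessonboundeddomains}); it works here because the interpolants stay positive and $u\in\Ct^{2,\alpha;1,\frac{\alpha}{2}}$ bounds the first- and second-order coefficients. Your argument is more general --- it would compare any two solutions, not merely $u$ against the explicit big-bang barriers --- but at the cost of more bookkeeping (path integral of linearisations, boundedness of the zeroth-order coefficient). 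The paper's direct substitution is shorter precisely because the comparison functions are so special that the nonlinearities cancel instead of needing to be Taylor-expanded.
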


\begin{proof}
From the equation for $u$, we deduce that given any constant $c\in\R$ the function $w(\cdot,t)=u(\cdot,t)-m(m-1)t-c$ satisfies 
\begin{align}\label{lem:pointwise-proof2}
\frac{1}{m-1}\frac{\partial w}{\partial t}
-\frac{\Delta_{g_{\Hyp}} w}{u}-\frac{(m-6)\sk{\nabla u,\nabla w}_{g_{\Hyp}}}{4u^2}=0
\quad\text{ in $\Omega\times[0,T]$.}
\end{align} 
Since $u>0$, equation \eqref{lem:pointwise-proof2} is uniformly parabolic. 
For $c=\frac{1}{3}\min_{\overline{\Omega}}u_0$ (respectively $c=\frac{5}{3}\max_{\overline{\Omega}}u_0$ ) we have $w\geq0$ (respectively $w\leq0$) on $(\partial\Omega\times[0,T])\cup(\Omega\times\{0\})$ by \eqref{eqn:estonphi} and the parabolic maximum principle (Proposition~\ref{prop:parabolicMP}) implies $w\geq0$ (respectively $w\leq0$) in $\Omega\times[0,T]$.  
\end{proof}

\begin{lem}[Uniqueness on bounded domains]\label{lem:Uniquenessonboundeddomains}
Let $u,v\in\Ct^{2,\alpha;1,\frac{\alpha}{2}}(\overline{\Omega}\times[0,T])$ be two positive solutions of problem~\eqref{eqn:pde} with equal initial and boundary data. 
Then $u=v$. 
\end{lem}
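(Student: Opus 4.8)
The plan is to set $w\vcentcolon=u-v$, observe that $w$ vanishes on the parabolic boundary $(\partial\Omega\times[0,T])\cup(\Omega\times\{0\})$ since $u$ and $v$ share the same initial and boundary data, and show that $w$ solves a linear, uniformly parabolic equation with bounded coefficients; the parabolic maximum principle (Proposition~\ref{prop:parabolicMP}) then forces $w\equiv0$.

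First I would subtract the two copies of $\partial_t u=Q[u]$ and rewrite $Q[u]-Q[v]$ as a linear operator in $w$. Writing $\Delta_{g_\Hyp}u=\Delta_{g_\Hyp}v+\Delta_{g_\Hyp}w$ gives
\[
\frac{\Delta_{g_\Hyp}u}{u}-\frac{\Delta_{g_\Hyp}v}{v}
=\frac{\Delta_{g_\Hyp}w}{u}-\frac{\Delta_{g_\Hyp}v}{uv}\,w,
\qquad
\frac{1}{u^2}-\frac{1}{v^2}=-\frac{u+v}{u^2v^2}\,w .
\]
The only genuinely nonlinear term is the quadratic gradient term, and the observation that linearises it is the identity
\[
2\sk{\nabla v,\nabla w}_{g_\Hyp}+\abs{\nabla w}_{g_\Hyp}^2=\sk{\nabla u+\nabla v,\nabla w}_{g_\Hyp},
\]
so that $\frac{\abs{\nabla u}_{g_\Hyp}^2}{u^2}-\frac{\abs{\nabla v}_{g_\Hyp}^2}{v^2}=\frac{\sk{\nabla u+\nabla v,\nabla w}_{g_\Hyp}}{u^2}-\frac{(u+v)\abs{\nabla v}_{g_\Hyp}^2}{u^2v^2}\,w$. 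Collecting terms, $w$ satisfies
\[
\frac{1}{m-1}\frac{\partial w}{\partial t}=\frac{\Delta_{g_\Hyp}w}{u}+\sk{b,\nabla w}_{g_\Hyp}+cw
\quad\text{in }\Omega\times[0,T],
\]
with $b=\frac{m-6}{4u^2}(\nabla u+\nabla v)$ and $c=-\frac{\Delta_{g_\Hyp}v}{uv}-\frac{(m-6)(u+v)\abs{\nabla v}_{g_\Hyp}^2}{4u^2v^2}$.

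What makes this work is that the leading coefficient is uniformly positive and the lower-order coefficients are bounded: by Lemma~\ref{lem:pointwise} we have $u(\cdot,t),v(\cdot,t)\geq\tfrac13\min_{\overline{\Omega}}u_0>0$ throughout $\Omega\times[0,T]$, while $u,v\in\Ct^{2,\alpha;1,\frac{\alpha}{2}}(\overline{\Omega}\times[0,T])$ makes $\nabla u$, $\nabla v$ and $\Delta_{g_\Hyp}v$ continuous, hence bounded, on the compact set $\overline{\Omega}\times[0,T]$. Thus $\tfrac1u$ is bounded away from zero, $b$ is bounded, and $c$ is bounded, in particular bounded from above. Since $w=0$ on the parabolic boundary, applying Proposition~\ref{prop:parabolicMP} to $w$ and to $-w$ — exactly as in the proofs of Lemmas~\ref{lem:Rsc} and~\ref{lem:pointwise}, where equations with first- and zeroth-order terms were already treated — yields $w\leq0$ and $w\geq0$ in $\Omega\times[0,T]$, i.e.\ $u=v$.

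The only step requiring care is the algebraic reduction of $Q[u]-Q[v]$ to linear form, above all handling the quadratic gradient term via the identity above so that no term of order higher than one in $w$ survives; everything else is the routine verification that the coefficients are bounded, which rests entirely on the positive lower bound for $u$ and $v$ supplied by Lemma~\ref{lem:pointwise}.
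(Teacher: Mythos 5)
Your proof takes essentially the same route as the paper: the identical algebraic linearisation of $Q[u]-Q[v]$ in $w=u-v$ (the same decomposition of the Laplacian, quotient, and quadratic-gradient differences), followed by the parabolic maximum principle applied to $w$ and $-w$. One small inaccuracy: Lemma~\ref{lem:pointwise} assumes the specific boundary data \eqref{eqn:boundarydata}, whereas Lemma~\ref{lem:Uniquenessonboundeddomains} is stated for general admissible $\phi$, so the correct justification of the positive lower bound on $u,v$ is simply continuity and positivity on the compact set $\overline{\Omega}\times[0,T]$, which is exactly what the paper uses.
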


\begin{proof}
With derivatives and inner products taken with respect to $g_{\Hyp}$, we have by \eqref{eqn:Yamabe-flow} 
\begin{align*}
\frac{1}{m-1}\frac{\partial}{\partial t}(u-v)
&=\frac{\Delta u}{u}-\frac{\Delta v}{v}+\frac{m-6}{4}\Bigl(\frac{\abs{\nabla u}^2}{u^2}-\frac{\abs{\nabla v}^2}{v^2}\Bigr)
\\[1ex]
&=\frac{\Delta (u-v)}{u}+\frac{m-6}{4u^2}\sk[\big]{\nabla(u+v),\nabla(u-v)} 
\\&\hphantom{{}={}}-\frac{\Delta v}{uv}(u-v) -\frac{m-6}{4}\frac{\abs{\nabla v}^2}{u^2v^2}(u+v)(u-v)
\end{align*}
which can be considered as linear parabolic equation for $u-v$ with bounded coefficients because $u,v\in\Ct^{2,\alpha;1,\frac{\alpha}{2}}(\overline{\Omega}\times[0,T])$ are uniformly bounded away from zero and from above and $\abs{\nabla u}$, $\abs{\nabla v}$, $\Delta v$ are bounded functions in $\overline{\Omega}$.   
Since $(u-v)$ vanishes along $(\Omega\times\{0\})\cup(\overline{\Omega}\times[0,T])$, the parabolic maximum principle (Proposition \ref{prop:parabolicMP}) implies $u-v=0$ in $\overline{\Omega}\times[0,T]$ as claimed. 
\end{proof}

\begin{lem}[Local H\"older estimate]
\label{lem:hoelderestimate}
Let $u\in\Ct^{2,\alpha;1,\frac{\alpha}{2}}(\overline{\Omega}\times[0,T])$ be a solution to problem \eqref{eqn:pde} with boundary data \eqref{eqn:boundarydata} and $T<\frac{1}{K_0}$, where $K_0\geq0$ is an upper bound for $\Rsc_{g_0}$ in $\Omega$. 
Then, there exists a constant $C$, depending only on the dimension $m$, the initial data $u_0$, the constant $K_0$ and the domain $\Omega$ such that for every $t\in[0,T]$
\begin{align*}
\nm[]{u(\cdot,t)}_{\Ct^{2,\alpha}(\overline{\Omega})}
&\leq C.
\end{align*}
\end{lem}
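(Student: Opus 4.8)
The plan is to pass to $U\vcentcolon=u^{\eta}$ with $\eta=\tfrac{m-2}{4}$. By \eqref{eqn:Yamabe-flow-eta}, $U$ solves
\begin{align*}
\partial_t U=\frac{m-1}{u}\bigl(\Delta_{g_{\Hyp}}U+m\eta\,U\bigr)\qquad\text{in }\Omega\times[0,T],
\end{align*}
so that, unlike \eqref{eqn:Yamabe-flow}, there is no term quadratic in $\nabla U$, and I would treat this as a linear parabolic equation for $U$ with coefficient $\tfrac{m-1}{u}$. First, Lemma~\ref{lem:pointwise} gives $0<\delta\leq u\leq M$ on $\overline{\Omega}\times[0,T]$ with $\delta,M$ controlled by $m$, $u_0$ and $T$, hence $\delta^{\eta}\leq U\leq M^{\eta}$ and $\tfrac{m-1}{u}$ lies between two positive constants. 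Since $T<\tfrac{1}{K_0}$, Lemma~\ref{lem:Rsc} gives $\abs{\Rsc}\leq\Lambda$ on $\Omega\times[0,T]$ (the bound on $\Omega\times[0,T[$ extending to $t=T$ by continuity), so that, using $\partial_t u=-u\,\Rsc$ and $\partial_t U=\eta u^{\eta-1}\partial_t u$, both $\abs{\partial_t u}$ and $\abs{\partial_t U}$ are bounded and $u,U$ are uniformly Lipschitz in $t$.

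Next I would derive a uniform-in-$t$ bound on $\nm{U(\cdot,t)}_{\Ct^{1,\alpha}(\overline{\Omega})}$. For each fixed $t$ the equation for $U$ is the elliptic Dirichlet problem $\Delta_{g_{\Hyp}}U(\cdot,t)=\tfrac{u(\cdot,t)}{m-1}\partial_t U(\cdot,t)-m\eta\,U(\cdot,t)$ in $\Omega$ with boundary data $\phi(\cdot,t)^{\eta}$; by the first paragraph the right-hand side is bounded in $\Ct^0(\overline{\Omega})$ uniformly in $t$, and by the explicit formula \eqref{eqn:boundarydata} the boundary data extend to a function on $\overline{\Omega}$ whose $\Ct^{2,\alpha}(\overline{\Omega})$ norm is bounded uniformly in $t$. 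Interior and boundary $\mathrm{L}^p$ estimates for the fixed, smooth, uniformly elliptic operator $\Delta_{g_{\Hyp}}$ on the bounded smooth domain $\Omega$ then bound $\nm{U(\cdot,t)}_{\mathrm{W}^{2,p}(\Omega)}$ uniformly in $t$ for every $p<\infty$; taking $p>m$ with $1-\tfrac{m}{p}\geq\alpha$ and using the Morrey embedding $\mathrm{W}^{2,p}(\Omega)\hookrightarrow\Ct^{1,\alpha}(\overline{\Omega})$ yields $\sup_t\nm{U(\cdot,t)}_{\Ct^{1,\alpha}(\overline{\Omega})}\leq C$, and, since $U$ takes values in $[\delta^{\eta},M^{\eta}]$, also $\sup_t\nm{u(\cdot,t)}_{\Ct^{1,\alpha}(\overline{\Omega})}\leq C$.

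It then remains to upgrade to $\Ct^{2,\alpha}$ by parabolic Schauder theory. By the two preceding paragraphs, $a\vcentcolon=\tfrac{m-1}{u}$ is uniformly elliptic, $\tfrac{m-1}{M}\leq a\leq\tfrac{m-1}{\delta}$, and $a$ together with $b\vcentcolon=\tfrac{m\eta(m-1)}{u}$ belongs to $\Ct^{\alpha,\frac{\alpha}{2}}(\overline{\Omega}\times[0,T])$ with controlled norm ($u$ being spatially $\Ct^{1,\alpha}$, Lipschitz in $t$, and bounded away from $0$). Viewing $\partial_t U-a\,\Delta_{g_{\Hyp}}U-b\,U=0$ as a linear uniformly parabolic equation with these coefficients, with lateral data $\phi^{\eta}\in\Ct^{2,\alpha;1,\frac{\alpha}{2}}(\partial\Omega\times[0,T])$ and initial data $u_0^{\eta}\in\Ct^{4,\alpha}(\overline{\Omega})$ of norm controlled by $m$ and $u_0$, and observing that the first order compatibility condition at $\partial\Omega\times\{0\}$ holds because it is equivalent to \eqref{eqn:compatibility} for problem \eqref{eqn:pde}, linear parabolic Schauder theory \cite[\textsection\,IV.5, Theorem 5.2]{Ladyzenskaja1967} bounds $\nm{U}_{\Ct^{2,\alpha;1,\frac{\alpha}{2}}(\overline{\Omega}\times[0,T])}$, hence $\nm{u(\cdot,t)}_{\Ct^{2,\alpha}(\overline{\Omega})}$ for every $t\in[0,T]$, by a constant with the dependence asserted in the statement (recall $T<\tfrac{1}{K_0}$).

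The step I expect to be the main obstacle is the uniform spatial $\Ct^{1,\alpha}$ bound. The substitution $U=u^{\eta}$ is engineered exactly so that the $\abs{\nabla u}^2$ nonlinearity disappears and the scalar curvature bound of Lemma~\ref{lem:Rsc} turns into an $\mathrm{L}^{\infty}$ bound on $\Delta_{g_{\Hyp}}U$, from which Calder\'{o}n--Zygmund and Morrey produce the H\"older control needed before linear Schauder theory can be invoked with coefficients of controlled norm. If one wishes to avoid the elliptic $\mathrm{L}^p$ theory, the same spatial H\"older estimate instead follows from the De Giorgi--Nash--Moser estimate applied, up to the parabolic boundary, to the equation for $U$ with its bounded measurable coefficients.
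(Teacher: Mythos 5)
Your proposal reproduces the paper's proof: both pass to $U=u^{\eta}$, use Lemmas~\ref{lem:Rsc} and \ref{lem:pointwise} to bound $\Delta_{g_{\Hyp}}U$ and $\partial_t U$ in $L^{\infty}$, apply elliptic $L^p$ theory and Sobolev embedding to obtain a uniform-in-$t$ $\Ct^{1,\alpha}(\overline{\Omega})$ bound on $U(\cdot,t)$, and then invoke linear parabolic Schauder theory \cite[\textsection\,IV.5, Theorem 5.2]{Ladyzenskaja1967} for the equation with coefficient $U^{-1/\eta}=1/u$. Your added remarks on the time-H\"older regularity of that coefficient and on the first-order compatibility condition spell out steps the paper leaves tacit.
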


\begin{proof}
Let $U=u^{\eta}$ be the corresponding solution to equation \eqref{eqn:Yamabe-flow-eta}, i.e.  
\begin{align}\label{eqn:Yamabe-flow-eta2}
U^{\frac{1}{\eta}}\tfrac{\partial}{\partial t}U
=-\eta U^{1+\frac{1}{\eta}}\,\Rsc&=(m-1)\bigl(m\eta U+\Delta_{g_{\Hyp}} U\bigr). 
\end{align}
Lemmata \ref{lem:Rsc} and \ref{lem:pointwise} yield uniform bounds on the function $U$ and the scalar curvature $\Rsc$ in $\Omega\times[0,T]$.  
Therefore, equation \eqref{eqn:Yamabe-flow-eta2} implies
\begin{align*}
\nm[\big]{-\Delta_{g_{\Hyp}} U(\cdot,t)}_{L^{\infty}(\overline{\Omega})}
&\leq C, &
\nm[\big]{\tfrac{\partial}{\partial t}U(\cdot,t)}_{L^{\infty}(\overline{\Omega})}
&\leq C
\end{align*}
for every $t\in[0,T]$, where $C$ is a finite constant depending only on $m$, $u_0$, $K_0$ and $\Omega$.  
Elliptic $L^p$-Theory \cite[\textsection\,9.5]{Gilbarg2001} implies $\nm{U(\cdot,t)}_{W^{2,p}(\overline{\Omega})}\leq C$ for every $1<p<\infty$. 
With $p>m$, Sobolev's embedding theorem implies $\nm{U(\cdot,t)}_{\Ct^{1,\alpha}(\overline{\Omega})}\leq C$. 
In particular, since $U=u^{\eta}$ is bounded away from zero by Lemma \ref{lem:pointwise}, we have 
\begin{align*}
\nm[\big]{U^{-\frac{1}{\eta}}}_{\Ct^{0,\alpha;0,\frac{\alpha}{2}}(\overline{\Omega}\times[0,T])}\leq C. 
\end{align*}
Hence, the equation 
\begin{align}\label{eqn:linearhilfs}
\frac{1}{m-1}\frac{\partial V}{\partial t}
&=\bigl(m\eta V+\Delta_{g_{\Hyp}} V\bigr)U^{-\frac{1}{\eta}}
\quad\text{ in $\Omega\times[0,T]$}
\end{align}
has sufficiently regular coefficients for linear parabolic theory \cite[\textsection\,IV.5, Theorem 5.2]{Ladyzenskaja1967} to apply: 
It follows that  $V=U$ is the unique solution to \eqref{eqn:linearhilfs} with the given initial and boundary data. 
Moreover, $U$ satisfies
\begin{align*}
\nm{U}_{\Ct^{2,\alpha;1,\frac{\alpha}{2}}(\overline{\Omega}\times[0,T])}\leq C.
\end{align*}
Since $U$ is bounded away from zero in $\overline{\Omega}\times[0,T]$, the claim follows. 
\end{proof}

\begin{cor}[Extension in time]\label{cor:extentionintime}
If the initial scalar curvature $\Rsc_{g_0}$ is bounded from above by $K_0\geq0$ in $\Hyp$, then problem \eqref{eqn:pde} with boundary data \eqref{eqn:boundarydata} is solvable in $\Omega\times\Interval{0,\frac{1}{K_0}}$ for every smooth, bounded domain $\Omega\subset\Hyp$.
\end{cor}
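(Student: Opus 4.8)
The plan is a continuation-in-time argument: short-time existence on the bounded domain $\Omega$ (Lemma~\ref{lem:shorttimeexistence}) provides openness, the a priori estimates of Lemmata~\ref{lem:Rsc}, \ref{lem:pointwise} and~\ref{lem:hoelderestimate} provide closedness up to time $\tfrac{1}{K_0}$, and uniqueness on bounded domains (Lemma~\ref{lem:Uniquenessonboundeddomains}) lets the successive solutions be glued into one. Concretely, I would let $T^*\in(0,\infty]$ be the supremum of those $T>0$ for which problem~\eqref{eqn:pde} with boundary data~\eqref{eqn:boundarydata} is solvable on $\Omega\times[0,T]$; by Lemma~\ref{lem:shorttimeexistence} we have $T^*>0$, and by Lemma~\ref{lem:Uniquenessonboundeddomains} the set of admissible $T$ is an interval and the corresponding solutions coincide on overlaps. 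The goal is to show $T^*\geq\tfrac{1}{K_0}$ (interpreted as $T^*=\infty$ when $K_0=0$); restricting to $\Omega\times\Interval{0,\frac{1}{K_0}}$ then gives the claim.

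I would argue by contradiction, assuming $T^*<\tfrac{1}{K_0}$, so in particular $T^*<\infty$. Gluing the unique solutions on $\Omega\times[0,T]$ for $T<T^*$ yields a solution $u$ of~\eqref{eqn:pde} with boundary data~\eqref{eqn:boundarydata} on $\Omega\times[0,T^*)$. Since $T^*<\tfrac{1}{K_0}$, Lemma~\ref{lem:hoelderestimate} applies on each $[0,T]$, $T<T^*$, and bounds $\nm{u(\cdot,t)}_{\Ct^{2,\alpha}(\overline{\Omega})}$ by a constant independent of $t\in[0,T^*)$; together with $\partial_t u=Q[u]$ this also bounds $\partial_t u$ in $\Ct^{0,\alpha}(\overline{\Omega})$ uniformly. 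A compactness argument (Arzel\`{a}--Ascoli and interpolation, using that $u(\cdot,t)$ is Cauchy in $\Ct^{0,\alpha}$ as $t\nearrow T^*$) then shows that $u(\cdot,t)$ converges in $\Ct^2(\overline{\Omega})$ to a limit $u(\cdot,T^*)\in\Ct^{2,\alpha}(\overline{\Omega})$, which by Lemma~\ref{lem:pointwise} satisfies $u(\cdot,T^*)\geq m(m-1)T^*+\tfrac13\min_{\overline{\Omega}}u_0>0$. Passing to the limit in the equation, $u\in\Ct^{2,\alpha;1,\frac{\alpha}{2}}(\overline{\Omega}\times[0,T^*])$ solves~\eqref{eqn:pde} with boundary data~\eqref{eqn:boundarydata} on $\Omega\times[0,T^*]$.

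Next I would restart the flow at $t=T^*$. The datum $u(\cdot,T^*)$ is positive, bounded away from zero and lies in $\Ct^{2,\alpha}(\overline{\Omega})$, and the time-translated boundary datum $\phi(\cdot,T^*+\,\cdot\,)$ is still of the form~\eqref{eqn:boundarydata}, still satisfies $\phi(\cdot,T^*+s)\geq m(m-1)(T^*+s)\geq m(m-1)s$, and meets the first order compatibility conditions~\eqref{eqn:compatibility} at $s=0$ because $\partial_t u=Q[u]$ holds up to $\partial\Omega$ at time $T^*$ while $u=\phi$ on $\partial\Omega\times[0,T^*]$. Hence Lemma~\ref{lem:shorttimeexistence} furnishes a solution on $\Omega\times[T^*,T^*+\tau]$ for some $\tau>0$; since its boundary datum is the restriction of~\eqref{eqn:boundarydata}, gluing it to $u$ along $\Omega\times\{T^*\}$ --- legitimate by the uniqueness of Lemma~\ref{lem:Uniquenessonboundeddomains} --- produces a solution of~\eqref{eqn:pde} with boundary data~\eqref{eqn:boundarydata} on $\Omega\times[0,T^*+\tau]$. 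This contradicts the maximality of $T^*$, so $T^*\geq\tfrac{1}{K_0}$, as desired.

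The step I expect to be the main obstacle is the passage to the limit $t\nearrow T^*$: it relies on the H\"older bound of Lemma~\ref{lem:hoelderestimate} staying finite on $[0,T]$ for every $T<\tfrac{1}{K_0}$, which in turn rests on the scalar curvature upper bound of Lemma~\ref{lem:Rsc} remaining finite precisely up to $t=\tfrac{1}{K_0}$; without this one could not conclude that the solution extends continuously to the closed slab and restart. The openness and gluing, and the verification of the compatibility conditions at the restart time, are by contrast routine consequences of Lemmata~\ref{lem:shorttimeexistence} and~\ref{lem:Uniquenessonboundeddomains}.
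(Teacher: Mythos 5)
Your proposal follows essentially the same continuation-in-time argument as the paper: you introduce the maximal existence time $T_*$, use Lemma~\ref{lem:shorttimeexistence} for $T_*>0$, Lemma~\ref{lem:Uniquenessonboundeddomains} to glue the solutions on nested time intervals, Lemma~\ref{lem:hoelderestimate} (via Lemmata~\ref{lem:Rsc} and~\ref{lem:pointwise}) to extend the solution to the closed slab $[0,T_*]$, and Lemma~\ref{lem:shorttimeexistence} again to restart and contradict maximality. You spell out the compactness step and the verification of the compatibility conditions~\eqref{eqn:compatibility} at the restart time somewhat more explicitly than the paper, but the structure and the ingredients are the same.
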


\begin{proof}
According to Lemma \ref{lem:shorttimeexistence}, problem \eqref{eqn:pde} is solvable in $\Omega\times[0,T]$ for some $T>0$. 
Let $T_{*}>0$ be the maximal existence time, i.\,e. the supremum over all $T>0$ such that problem \eqref{eqn:pde} has a solution defined in $\Omega\times[0,T]$. 
By Lemma \ref{lem:Uniquenessonboundeddomains}, two such solutions agree on their common domain, therefore there exists a solution $u$ defined on $\overline{\Omega}\times\Interval{0,T_{*}}$. 
Suppose, that for some $\Omega$ the maximal existence time is $T_{*}<\frac{1}{K_0}$. 
Then, Lemma \ref{lem:hoelderestimate} implies that $u$ can be extended to $u\in\Ct^{2,\alpha;1,\frac{\alpha}{2}}(\overline{\Omega}\times[0,T_{*}])$ and that $u(\cdot,T_{*})\in\Ct^{2,\alpha}(\overline{\Omega})$ is suitable initial data for problem \eqref{eqn:pde}. 
The boundary data \eqref{eqn:boundarydata} are defined also for $t\geq T_{*}$ and they are compatible with $u(\cdot,T_{*})$ at time $T_{*}$. 
Therefore, we may apply Lemma \ref{lem:shorttimeexistence} to extend the solution regularly in time in contradiction to the maximality of $T_{*}$. 
\end{proof}

\subsection{Uniform estimates}

We assume that the initial metric $g_0=u_0g_{\Hyp}$ and its scalar curvature satisfy the upper bounds $u_0\leq C_0$ and $\Rsc_{g_0}\leq K_0$ in $\Hyp$ with some constant $K_0\geq 0$. 
Let $0<T<\frac{1}{K_0}$ be fixed. 
From the previous section we recall that for any smooth, bounded domain $\Omega\subset\Hyp$, there exists a uniformly bounded solution $u$ of \eqref{eqn:pde} on $\Omega\times[0,T]$. 
However, the previous H\"older estimates on $u$ may depend on the domain $\Omega$. 
In the following, we derive independent bounds. 
As before, spatial derivatives and inner products are taken with respect to the hyperbolic background metric but in the following we will suppress the index $g_{\Hyp}$ to ease notation. 

\begin{lem}[Uniform gradient estimate]\label{lem:uniformestimate}
Let $\Bl_\ell$ be the metric ball of radius $\ell>1$ around the origin in $(\Hyp,g_{\Hyp})$. 
Let $u\in\Ct^{2,\alpha;1,\frac{\alpha}{2}}(\Bl_\ell\times[0,T])$ be a solution to problem  
\eqref{eqn:pde} with boundary data \eqref{eqn:boundarydata} as constructed in Corollary \ref{cor:extentionintime}. 
Then, $U=u^{\eta}$ with $\eta=\frac{m-2}{4}$ satisfies 
\begin{align*}
\abs{\nabla U}^2\leq C \quad \text{ in $\Bl_{\ell-1}\times[0,T]$, }
\end{align*}
where the constant $C$ depends on the dimension $m$ and the constants $C_0$, $K_0$, $T$ but not on $\ell$. 
Similar bounds hold for higher derivatives of $U$. 
\end{lem}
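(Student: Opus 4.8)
The plan is to establish the gradient bound via a local Bernstein-type argument applied to equation \eqref{eqn:Yamabe-flow-eta}, which we rewrite in the form
\begin{align*}
\frac{1}{m-1}\frac{\partial U}{\partial t}=\bigl(m\eta U+\Delta U\bigr)U^{-\frac{1}{\eta}}.
\end{align*}
By Lemmata \ref{lem:Rsc} and \ref{lem:pointwise} we already know that on $\Bl_\ell\times[0,T]$ the quantities $U$, $U^{-1}$, $\Rsc$ are bounded by constants depending only on $m$, $C_0$, $K_0$, $T$ (and not on $\ell$, since the bounds in Lemma \ref{lem:pointwise} involve $\min u_0$ and $\max u_0$ over $\Hyp$, not over $\Omega$; here one uses $u_0\le C_0$ for the upper bound and the lower bound $u(\cdot,t)\ge m(m-1)t$ together with the boundary-data estimate for positivity). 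Hence $-\Delta U$ is bounded as well. The first step is therefore to record these domain-independent $C^0$ bounds explicitly.

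The second step is the Bernstein estimate. Set $f\vcentcolon=\abs{\nabla U}^2$ and compute its evolution under \eqref{eqn:Yamabe-flow-eta}; differentiating the equation, commuting derivatives on $(\Hyp,g_{\Hyp})$ (this is where the curvature $-1$ of the hyperbolic background enters, contributing a controlled lower-order term via the Bochner formula $\tfrac12\Delta\abs{\nabla U}^2=\abs{\nabla^2 U}^2+\sk{\nabla\Delta U,\nabla U}+\Ric(\nabla U,\nabla U)$ with $\Ric=-(m-1)g_\Hyp$), one obtains a parabolic inequality of the schematic form
\begin{align*}
\frac{1}{m-1}\frac{\partial f}{\partial t}\leq U^{-\frac{1}{\eta}}\Delta f-\tfrac{2}{m-1}U^{-\frac{1}{\eta}}\abs{\nabla^2 U}^2+\sk{a,\nabla f}+bf
\end{align*}
with $a$ and $b$ controlled by the already-established $C^0$ bounds on $U$, $U^{-1}$ and $\Delta U$. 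Introduce a spatial cutoff $\varphi\in\Ct_c^\infty(\Bl_\ell)$ with $\varphi\equiv1$ on $\Bl_{\ell-1}$, $\abs{\nabla\varphi}^2+\abs{\Delta\varphi}\le C(m)$ (possible since $\ell>1$), and consider the auxiliary function $G\vcentcolon=\varphi^2 f+\lambda U^2$ for a suitable constant $\lambda$ chosen so that the good negative term $-\abs{\nabla^2U}^2$ absorbs the cross terms $\varphi\abs{\nabla\varphi}\abs{\nabla U}\abs{\nabla^2 U}$ arising from $\Delta(\varphi^2 f)$ and the term $\lambda U\Delta U$ controls the rest. At an interior spatial maximum of $G$ (for each fixed time, or equivalently a space-time maximum on $\overline{\Bl_\ell}\times[0,T]$, noting $G$ has no contribution from $\partial\Bl_\ell$ since $\varphi$ vanishes there while $\lambda U^2$ stays bounded) one has $\nabla G=0$ and $\Delta G\le0$, and the parabolic maximum principle (Proposition \ref{prop:parabolicMP}) applied to $\ex^{-Ct}G$ yields a bound $G\le C$ with $C=C(m,C_0,K_0,T)$. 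Restricting to $\Bl_{\ell-1}$ gives $\abs{\nabla U}^2\le C$ there.

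The third step, the higher derivative bounds, then follows by bootstrapping: with $\abs{\nabla U}^2$ now bounded, equation \eqref{eqn:linearhilfs} (with $U^{-1/\eta}$ a known Hölder-bounded coefficient, as in the proof of Lemma \ref{lem:hoelderestimate}) is uniformly parabolic with coefficients bounded independently of $\ell$ on $\Bl_{\ell-1}\times[0,T]$; interior parabolic Schauder estimates \cite[\textsection\,IV.5]{Ladyzenskaja1967} on balls $\Bl_{\ell-2}\subset\subset\Bl_{\ell-1}$ give $\ell$-independent $\Ct^{2,\alpha;1,\frac\alpha2}$ bounds for $U$, and differentiating the equation and iterating yields bounds on all higher derivatives on compact subsets, with constants independent of $\ell$.

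The main obstacle I expect is the algebra of the Bernstein step: choosing $\lambda$ and organising the reaction and gradient terms so that the single good term $-\abs{\nabla^2U}^2$ absorbs all the cross terms, while keeping every coefficient traced back to the $\ell$-independent $C^0$ bounds, and correctly handling the degenerate factor $U^{-1/\eta}$ multiplying the Laplacian (which, being bounded above and below, only rescales the parabolicity constant but must be carried through consistently). The hyperbolic background curvature is harmless — it enters only through the bounded Ricci term in Bochner's formula — but it must be accounted for rather than ignored.
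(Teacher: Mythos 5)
Your route is genuinely different from the paper's. The paper works with $w=U^{p}\abs{\nabla U}^2$ for $p=-\tfrac{1}{2}$, multiplies by a hyperbolic cutoff $\chi$, and lets the negative coefficient $p(p+1)=-\tfrac14$ produce a quadratic damping $-\tfrac14 U^{-\frac{3}{2}-\frac1\eta}\chi w^2$; together with Young's inequality this converts all reaction terms into a spatially constant forcing $\tilde C_m\bigl(U^{\frac32-\frac1\eta}+K_1^2 U^{\frac32+\frac1\eta}\bigr)$, which is unbounded as $t\searrow 0$ but \emph{integrable} because $(\tfrac32-\tfrac1\eta)\eta>-1$, and one concludes by comparing $\chi w$ with a primitive $F$ of that forcing. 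You instead take $p=0$, use a $\varphi^2$ cutoff, and append $\lambda U^2$. This can be made to work, but your sketch contains two genuine gaps.

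First, you misidentify the mechanism by which $\lambda U^2$ helps. Writing $\mathcal L=\tfrac1{m-1}\partial_t-U^{-1/\eta}\Delta$ for the natural parabolic operator, a direct computation gives
\begin{align*}
\mathcal L(\lambda U^2)&=2\lambda U^{1-\frac1\eta}(m\eta U+\Delta U)-U^{-\frac1\eta}\bigl(2\lambda U\Delta U+2\lambda\abs{\nabla U}^2\bigr)\\
&=2\lambda m\eta\, U^{2-\frac1\eta}-2\lambda U^{-\frac1\eta}\abs{\nabla U}^2,
\end{align*}
so the $\lambda U\Delta U$ contributions \emph{cancel identically}; they do not control anything. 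The term that actually works for you is $-2\lambda U^{-1/\eta}\abs{\nabla U}^2=-2\lambda U^{-1/\eta}f$, coming from the $2\lambda\abs{\nabla U}^2$ part of $\Delta(\lambda U^2)$. This sign is essential: it is the only place a damping proportional to $U^{-1/\eta}f$ appears on your side, and without recognising it you cannot absorb the reaction terms. Indeed, the dangerous reaction coefficient in the Bochner computation is $\sim U^{-1/\eta}$, which using only the $\ell$-independent lower bound $u\ge m(m-1)t$ behaves like $t^{-1}$; a linear Gronwall bound $G\le G(0)\exp\bigl(C\int_0^t U^{-1/\eta}\bigr)$ therefore diverges. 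The paper's choice $p=-\tfrac12$ sidesteps this with the quadratic term $-\tfrac14(\chi w)^2$; your $\lambda U^2$ can do the same job only once you identify and exploit the $-2\lambda U^{-1/\eta}f$ term, after which the surviving reaction is the bounded one from $\Rsc\le K_1$.

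Second, your final step "parabolic maximum principle applied to $\ex^{-Ct}G$ yields $G\le C$" does not actually close the argument, because after absorption there remains the forcing $2\lambda m\eta U^{2-\frac1\eta}$, which is unbounded near $t=0$ (for $m=3,4$) although time-integrable. Proposition~\ref{prop:parabolicMP} needs a non-positive right-hand side, so you must subtract a primitive function $F(t)=F(0)+\int_0^t$(forcing)$\,\dd s$ and compare $\ex^{-K_1 t}G$ with $F$, exactly as the paper does with its $F$. A flat exponential weight handles only the bounded linear reaction, not an integrable but unbounded source. Finally, for the cutoff to be $\ell$-independent on hyperbolic space you cannot get away with "possible since $\ell>1$": you need the hyperbolic distance estimate $\Delta r\le 2(m-1)$ on $\Hyp\setminus\Bl_1$ from Lemma~\ref{lem:Laplacehypdist} together with a suitably tailored $\varphi$ as in Lemma~\ref{lem:cutoffest}, since $\abs{\Delta\chi}$ involves $\varphi'(r)\Delta r$ and $\Delta r$ grows with $m$. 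With these three repairs your variant is correct; as it stands, the computation would stall at the absorption step.
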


\begin{proof}
Let $p\in\mathbb{R}$ be an exponent. 
As in \cite{Ma1999}, we consider the function $w=U^p\abs{\nabla U}^2$ and compute
\begin{align*}
\frac{1}{m-1}\frac{\partial w}{\partial t}
&=p U^{p-1-\frac{1}{\eta}}\frac{U^{\frac{1}{\eta}}}{m-1}\frac{\partial U}{\partial t}\abs{\nabla U}^2
+2\sk[\Big]{\nabla U,\frac{U^{p}}{m-1}\nabla\frac{\partial U}{\partial t}}.
\end{align*}
We recall $\Rsc=-\frac{1}{\eta U}\frac{\partial U}{\partial t}$ and $\frac{1}{m-1}U^{\frac{1}{\eta}}\frac{\partial U}{\partial t}=m\eta U+\Delta U$ from equation \eqref{eqn:Yamabe-flow-eta}. 
Since 
\begin{align*}
\frac{U^{p}}{m-1}\nabla\frac{\partial U}{\partial t}
&=U^{p-\frac{1}{\eta}}\nabla\Bigl(\frac{U^{\frac{1}{\eta}}}{m-1}\frac{\partial U}{\partial t}\Bigr)
-\frac{1}{\eta}\frac{U^{p-1}}{m-1}\frac{\partial U}{\partial t}\nabla U
\\[.5ex]
&=U^{p-\frac{1}{\eta}}\bigl(m\eta\nabla U+\nabla\Delta U\bigr)
+\frac{\Rsc}{m-1}U^p\nabla U,
\end{align*}
we have
\begin{align}
\frac{1}{m-1}\frac{\partial w}{\partial t} 
&=p U^{p-1-\frac{1}{\eta}}\bigl(m\eta U+ \Delta U\bigr)\abs{\nabla U}^2
+2m\eta U^{p-\frac{1}{\eta}}\abs{\nabla U}^2
\notag\\&\hphantom{{}={}}
+2U^{p-\frac{1}{\eta}}\sk{\nabla U,\nabla\Delta U}
+\frac{2\Rsc}{m-1}U^p\abs{\nabla U}^2
\notag\\[1ex]
&=(p+2)m\eta U^{p-\frac{1}{\eta}}\abs{\nabla U}^2 
+\frac{2w\Rsc}{m-1}
\label{dwdt}
\\&\hphantom{{}={}}\notag
+\bigl(p U^{p-1}\abs{\nabla U}^2\Delta U + 2U^p\sk{\nabla U,\nabla\Delta U}\bigr)U^{-\frac{1}{\eta}}.
\end{align}
Bochner's identity implies 
\begin{align}\label{Ricci-Identity}
\Delta\tfrac{1}{2}\abs{\nabla U}^2
&=\abs{\nabla\nabla U}^2+\sk{\nabla U,\nabla\Delta U}+\Ric(\nabla U,\nabla U).
\end{align}
Together with $\Ric_{g_{\Hyp}}=-(m-1)g_{\Hyp}$, we apply \eqref{Ricci-Identity} in the following computation. 
\begin{align*}
\nabla w
&=p U^{p-1}\abs{\nabla U}^2\nabla U+U^p\nabla\abs{\nabla U}^2,
\\[1ex]
\Delta w
&=p(p-1)U^{p-2}\abs{\nabla U}^4
+2p U^{p-1}\sk{\nabla U,\nabla\abs{\nabla U}^2}
\\&\hphantom{{}={}}
+p U^{p-1}\abs{\nabla U}^2\Delta U
+U^p\Delta\abs{\nabla U}^2
\\[1ex]
&=p(p-1)U^{p-2}\abs{\nabla U}^4
+2p U^{-1}\sk{\nabla U,\nabla w}
-2p^2 U^{p-2}\abs{\nabla U}^4
\\&\hphantom{{}={}}
+p U^{p-1}\abs{\nabla U}^2\Delta U
+2U^p\bigl(\abs{\nabla\nabla U}^2+\sk{\nabla U,\nabla\Delta U}-(m-1)\abs{\nabla U}^2\bigr).
\end{align*}
Hence,
\begin{align}
&\bigl(p U^{p-1}\abs{\nabla U}^2\Delta U
+2U^p\sk{\nabla U,\nabla\Delta U}\bigr)
\notag\\
&=\Delta w-2p U^{-1}\sk{\nabla U,\nabla w}
\notag\\&\hphantom{{}={}}\label{laplaceterms}
+p(p+1)U^{p-2}\abs{\nabla U}^4+2(m-1) U^{p}\abs{\nabla U}^2-2U^{p}\abs{\nabla\nabla U}^2. 
\end{align}
We insert \eqref{laplaceterms} into \eqref{dwdt} and resubstitute $w=U^p\abs{\nabla U}^2$ to obtain
\begin{align*}
\frac{1}{m-1}\frac{\partial w}{\partial t} 
&=\bigl((p+2)m\eta+2(m-1)\bigr)U^{-\frac{1}{\eta}} w
+\frac{2w\Rsc}{m-1}
\\&\hphantom{{}={}}
+\bigl(\Delta w-2p U^{-1}\sk{\nabla U,\nabla w}\bigr)U^{-\frac{1}{\eta}}
\\&\hphantom{{}={}}
+p(p+1)U^{-p-2-\frac{1}{\eta}}w^2-2U^{p-\frac{1}{\eta}}\abs{\nabla\nabla U}^2. 
\end{align*}
Choosing $p=-\frac{1}{2}$ and deducing $\frac{2}{m-1}\Rsc\leq K_1$ from Lemma \ref{lem:Rsc} for some constant $K_1\geq 0$ depending on $K_0$ and $T$, we obtain
\begin{align}
\frac{1}{m-1}\frac{\partial w}{\partial t} 
&\leq\bigl(\tfrac{3}{2}m\eta+2m-2\bigr)U^{-\frac{1}{\eta}} w
+K_1 w
\notag\\&\hphantom{{}={}}\label{est:wohnechi}
+\bigl(\Delta w+ U^{-1}\sk{\nabla U,\nabla w}\bigr)U^{-\frac{1}{\eta}}
-\tfrac{1}{4}U^{-\frac{3}{2}-\frac{1}{\eta}}w^2.
\end{align}
Let $\varphi\colon\R\to[0,1]$ be a smooth, non-increasing cutoff function satisfying $\varphi(x)=1$ for $x\leq\ell-1$ and $\varphi(x)=0$ for $x\geq\ell$. 
Let $r\colon\Hyp\to\Interval{0,\infty}$ be the Riemannian distance function from the origin in $(\Hyp,g_{\Hyp})$ and let $\chi=\varphi\circ r$. 
Before we multiply both sides of equation \eqref{est:wohnechi} by $\chi$, we compute 
\begin{align}
\chi\Delta w&=\Delta(\chi w)-w\Delta\chi-2\sk{\nabla w,\nabla\chi}
\notag\\\label{eqn:chiw1}
&=\Delta(\chi w)-\frac{2}{\chi}\sk[\big]{\nabla(w\chi),\nabla\chi}
+\Bigl(\frac{2\abs{\nabla\chi}^2}{\chi}-\Delta\chi\Bigr)w. 
\end{align}
Lemma \ref{lem:Laplacehypdist} stating $\Delta r\leq2(m-1)$ on $\Hyp\setminus\Bl_1$ and Lemma \ref{lem:cutoffest} about cutoff functions (both given in the appendix) provide an estimate of the last term: 
There exists a constant $c_m$ depending only on the dimension $m$ such that 
\begin{align}\label{eqn:chiw2}
\Bigl(\frac{2\abs{\nabla\chi}^2}{\chi}-\Delta\chi\Bigr)
=\frac{2\abs{\varphi'(r)}^2}{\varphi(r)}-\varphi''(r)-\varphi'(r)\Delta r
&\leq c_m\sqrt{\chi}
\end{align}
and such that $\chi^{-3}\abs{\nabla\chi}^4\leq c_m^2$ which will be used later.  
\eqref{eqn:chiw1} and \eqref{eqn:chiw2} lead to
\begin{align}
\frac{1}{m-1}\frac{\partial(\chi w)}{\partial t} 
&\leq\Bigl(\bigl(\tfrac{3}{2}m\eta+2m\bigr)U^{-\frac{1}{\eta}}
+K_1\Bigr)(\chi w)
\notag\\&\hphantom{{}={}}\notag
+U^{-\frac{1}{\eta}}\chi\Delta w
+\sk[\big]{\nabla U,\chi\nabla w} U^{-1-\frac{1}{\eta}}
-\tfrac{1}{4}U^{-\frac{3}{2}-\frac{1}{\eta}}\chi w^2 
\\[1ex]\notag
&\leq\Bigl(\bigl(\tfrac{3}{2}m\eta+2m\bigr)U^{-\frac{1}{\eta}}
+K_1\Bigr)(\chi w)
\notag\\&\hphantom{{}={}}\label{eqn:chiw3}
+\Bigl(\Delta(\chi w)-\frac{2}{\chi}\sk[\big]{\nabla(w\chi),\nabla\chi}\Bigr)U^{-\frac{1}{\eta}}
+c_m\sqrt{\chi}\,w\,U^{-\frac{1}{\eta}}
\\&\hphantom{{}={}}\notag
+\sk[\big]{\nabla U,\nabla(\chi w)}U^{-1-\frac{1}{\eta}}
-\sk[\big]{\nabla U,w\nabla\chi}U^{-1-\frac{1}{\eta}}
-\tfrac{1}{4}U^{-\frac{3}{2}-\frac{1}{\eta}}\chi w^2. 
\end{align}
Young's inequality for $a,b\in\mathbb{R}$, $\delta>0$ and $p,q>1$ with $\frac{1}{p}+\frac{1}{q}=1$ states that 
\begin{align*}
\abs{ab}\leq \delta\abs{a}^p+\tfrac{1}{q}(p\delta)^{1-q}\abs{b}^q.
\end{align*}
We apply it with $p=\frac{4}{3}$ and $q=4$ and recall $w=U^{-\frac{1}{2}}\abs{\nabla U}^2$, i.e. $\abs{\nabla U}=w^{\frac{1}{2}}U^{\frac{1}{4}}$ to obtain  
\begin{align*}
-\sk[\big]{\nabla U,w\nabla\chi}U^{-1-\frac{1}{\eta}}
&\leq w\abs{\nabla U}U^{-1-\frac{1}{\eta}}\,\abs{\nabla \chi}
\\[1ex]
&=\bigl(U^{-\frac{9}{8}}\chi^{\frac{3}{4}}w^{\frac{3}{2}}\bigr)
\bigl(U^{\frac{3}{8}}\chi^{-\frac{3}{4}}\abs{\nabla\chi}\bigr)
U^{-\frac{1}{\eta}} 
\\[1ex]
&\leq\delta U^{-\frac{3}{2}-\frac{1}{\eta}}\chi w^2 
+\frac{3^3}{4^4}\delta^{-3}\frac{\abs{\nabla\chi}^4}{\chi^3}U^{\frac{3}{2}-\frac{1}{\eta}}.
\end{align*}
Young's inequality with $p=2=q$ also yields
\begin{align*}
c_m\sqrt{\chi}\,w\,U^{-\frac{1}{\eta}}
&\leq\delta U^{-\frac{3}{2}-\frac{1}{\eta}}\chi w^2
+\frac{c_m^2}{4\delta}U^{\frac{3}{2}-\frac{1}{\eta}}.
\end{align*}
Let the sum of all terms in \eqref{eqn:chiw3} containing $\Delta(\chi w)$ or $\nabla(\chi w)$ be denoted by 
\begin{align*}
\Lambda\vcentcolon=\Bigl(\Delta(\chi w)-\frac{2}{\chi}\sk[\big]{\nabla(w\chi),\nabla\chi}\Bigr)U^{-\frac{1}{\eta}}
+\sk[\big]{\nabla U,\nabla(\chi w)}U^{-1-\frac{1}{\eta}}
\end{align*}
and let the largest of the occurring factors which depend only on the dimension $m$ be denoted by $C_m$. 
Then
\begin{align*}
\frac{1}{m-1}\frac{\partial(\chi w)}{\partial t}
&\leq C_m\bigl(U^{-\frac{1}{\eta}}+K_1\bigr)(\chi w)
+C_m\bigl(\delta^{-1}+\delta^{-3}\bigr)U^{\frac{3}{2}-\frac{1}{\eta}} 
\\&\hphantom{{}={}}\notag
-(\tfrac{1}{4}-2\delta)U^{-\frac{3}{2}-\frac{1}{\eta}}\chi w^2+\Lambda.
\end{align*}
Choosing $\delta=\frac{1}{16}$, we have $-(\tfrac{1}{4}-2\delta)=-\frac{1}{8}<0$. 
Since $-\chi w^2\leq-(\chi w)^2$ and since 
\begin{align*}
c_1(\chi w)-c_2(\chi w)^2\leq\frac{c_1^2}{4c_2},
\end{align*}
for any $c_1,c_2>0$, we obtain
\begin{align}\label{est:wmax}
\frac{\partial(\chi w)}{\partial t}-\Lambda
&\leq\tilde{C}_m\bigl(
U^{\frac{3}{2}-\frac{1}{\eta}}
+K_1^2 U^{\frac{3}{2}+\frac{1}{\eta}}
\bigr)
\end{align}
with a different constant $\tilde{C}_m$. 
By Lemma \ref{lem:pointwise}, we have 
\begin{align*}
\bigl(m(m-1)t\bigr)^{\eta}\leq U(\cdot,t)\leq C(m,C_0,T)
\end{align*}
in $\Bl_\ell\times[0,T]$. 
Since $(\frac{3}{2}-\frac{1}{\eta})\eta>-1$, the right hand side of \eqref{est:wmax} is bounded from above by a spatially constant, positive function $f\in L^1([0,T])$.
Let $F'(t)=f(t)$ with $F(0)=\max(\chi w)(0)$ define a primitive function $F$ for $t\mapsto f(t)$. 
Then, 
\begin{align*}
\frac{\partial(\chi w-F)}{\partial t}\leq
\Lambda=
U^{-\frac{1}{\eta}}\Delta(\chi w-F)
+U^{-\frac{1}{\eta}}\sk[\Big]{
\nabla(w\chi-F),\frac{\nabla U}{U}-\frac{2\nabla\chi}{\chi}}.
\end{align*}
Since $\chi w-F=-F\leq0$ on $\partial\Bl_\ell\times[0,T]$ and $\chi w\leq F$ on $\Bl_\ell\times\{0\}$, the parabolic maximum principle (Proposition \ref{prop:parabolicMP}) implies $\chi w\leq F$ in $\Bl_\ell\times[0,T]$. 
The map $t\mapsto F(t)$ is increasing and $F(T)$ depends only on $m,C_0,K_0,T$. 
Therefore, in $\Bl_{\ell-1}\times[0,T]$, we finally have
\begin{align*}
\abs{\nabla U}^2=(\chi w)\sqrt{U}\leq C(m,C_0,K_0,T). 
\end{align*}
Since the Yamabe flow equation is only of second order, similar estimates on higher derivatives of $U$ follow analogously. 
\end{proof}

\begin{proof}[Proof of Theorem \ref{thm:existence}]
Let $\eta=\frac{m-2}{4}$ and let the initial metric $g_0=u_0g_{\Hyp}$ be given by $0<U_0=u_0^{\eta}\in\Ct^{2,\alpha}(\Hyp)$. 
Let $\Bl_r$ be the metric ball of radius $r>0$ around the origin in $(\Hyp,g_{\Hyp})$. 
Then, $\Bl_1\subset\Bl_2\subset\ldots\subset\Hyp$ is an exhaustion of $\Hyp$ with smooth, bounded domains. 
We fix $0<T<\frac{1}{K_0}$ and choose $\phi$ as in \eqref{eqn:boundarydata}. 
By Corollary \ref{cor:extentionintime}, the problem
\begin{align*}\left\{
\begin{aligned}
\frac{1}{m-1}\frac{\partial U_k}{\partial t}&=\bigl(m\eta U_k+\Delta_{g_{\Hyp}} U_k\bigr)U_k^{-\frac{1}{\eta}}
&&\text{ in $\Bl_k\times[0,T]$, } \\
U_k&=\phi^{\eta}
&&\text{ on $\partial \Bl_k\times[0,T]$, } \\[.5ex] 
U_k&=U_0 
&&\text{ on $\Bl_k\times\{0\}$, }  
\end{aligned}\right.
\end{align*}
is solvable for every $k>0$. 
According to Lemma \ref{lem:uniformestimate}, the sequence $\{{U_k}|_{\Bl_1\times[0,T]}\}_{2\leq k\in\N}$ is uniformly bounded in $\Ct^{2,\alpha;1,\frac{\alpha}{2}}(\Bl_1\times[0,T])$. 
Since $\Bl_1$ is a bounded domain, the embedding $\Ct^{2,\alpha;1,\frac{\alpha}{2}}(\Bl_1\times[0,T])\hookrightarrow\Ct^{2;1}(\Bl_1\times[0,T])$ is compact and we obtain a subsequence $\Lambda_1\subset\N$ such that 
\begin{align*}
\{{U_k}|_{\Bl_1\times[0,T]}\}_{2\leq k\in\Lambda_1}
\end{align*}
converges in $\Ct^{2;1}(\Bl_1\times[0,T])$ to a solution of the Yamabe flow equation \eqref{eqn:Yamabe-flow-eta} on $\Bl_1$. 
We repeat this argument to obtain a subsequence $\Lambda_2\subset\Lambda_1$ such that 
\begin{align*}
\{{U_k}|_{\Bl_2\times[0,T]}\}_{3\leq k\in\Lambda_2}
\end{align*}
converges to a solution of \eqref{eqn:Yamabe-flow-eta} on $\Bl_2$. 
Iterating this procedure leads to a diagonal subsequence of $\{U_k\}_{2\leq k}$ which converges to a limit $U\in\Ct^{2;1}(\Hyp\times[0,T])$ satisfying the Yamabe flow equation \eqref{eqn:Yamabe-flow-eta}.
Since the bounds from Lemma \ref{lem:pointwise} are preserved in the limit, we have $m(m-1)t\leq U^{\frac{1}{\eta}}$, i.e. $(g(t))_{t\in[0,T]}$ given by $g(t)=U(\cdot,t)^{\frac{1}{\eta}}g_{\Hyp}$ is an instantaneously complete Yamabe flow. 

It remains to show that the Yamabe flow constructed above can be extended in time. 
Let $T_*$ be the supremum over all $T>0$ such that there exists a Yamabe flow $g(t)=u(\cdot,t)g_{\Hyp}$ on $\Hyp$ which is defined for $t\in[0,T]$ and  satisfies  $u(\cdot,0)=u_0$ as well as 
\begin{align}
\label{eqn:180824}
\forall t\in[0,T]:\quad
m(m-1)t\leq u(\cdot,t)&\leq m(m-1)t+\tfrac{5}{3}\sup_{\Hyp}u_0.
\end{align}
We have already shown $T_{*}\geq\frac{1}{K_0}$. 
Suppose, $T_{*}<\infty$ and let $0<\varepsilon<\frac{1}{5}T_{*}$ be arbitrary. 
For $T=T_{*}-\varepsilon$, there exists $u\colon\Hyp\times[0,T]\to\R$ satisfying $u(\cdot,0)=u_0$ together with estimate \eqref{eqn:180824} and equation \eqref{eqn:Yamabe-flow-eta}  which can be written in divergence form: 
\begin{align}\notag
\frac{1}{m-1}\frac{\partial u^{\eta+1}}{\partial t}
=\frac{\eta+1}{\eta}\frac{u}{m-1}\frac{\partial u^{\eta}}{\partial t}
&=m(\eta+1)u^{\eta}+\frac{\eta+1}{\eta}\operatorname{div}_{g_{\Hyp}}(\nabla u^{\eta})
\\\label{eqn:divergenceform}
&=\frac{m(\eta+1)u^{\eta+1}}{u}+\operatorname{div}_{g_{\Hyp}}\Bigl(\frac{1}{u}\nabla u^{\eta+1}\Bigr)  
\end{align}
where we recall $\eta=\frac{m-2}{4}$. 
Around an arbitrary point $p\in\Hyp$, we choose geodesic normal coordinates $x$ and given $0<r<\tfrac{1}{3}\sqrt{T}$ we consider the parabolic cylinder  
\begin{align*}
Q_r&\vcentcolon=\{(x,t)\mid T-r^2<t<T,~\abs{x}<r\}.
\end{align*} 
According to \eqref{eqn:180824} and the choices of $r$, $\varepsilon$ and $T$, we have 
\begin{align*}
m(m-1)\tfrac{4}{9}T_{*}\leq u\leq m(m-1)T_{*}+\tfrac{5}{3}\sup_{\Hyp}u_0
\qquad\text{ in $\overline{Q}_{2r}$}
\end{align*}
because $T-(2r)^2>T-\frac{4}{9}T=\frac{5}{9}(T_{*}-\varepsilon)>\frac{4}{9}T_{*}$. 
Hence, \eqref{eqn:divergenceform} can be interpreted as a linear equation with uniformly bounded coefficient $\frac{1}{u}$. 
Therefore we may apply parabolic DeGiorgi--Nash--Moser Theory \cite[Theorem III.10.1]{Ladyzenskaja1967} (see also \cite{Trudinger1968}) to equation \eqref{eqn:divergenceform} in order to obtain 
\begin{align}\label{eqn:20180823}
\nm{u^{\eta+1}}_{\Ct^{0,\alpha;0,\frac{\alpha}{2}}(\overline{Q_r})}\leq C\bigl(m,\nm{u}_{L^{\infty}(Q_{2r})},\nm{u^{-1}}_{L^{\infty}(Q_{2r})}\bigr)
\leq C'(m,T_{*},\sup u_0) 
\end{align}
for some $0<\alpha<1$ and some constants $C$, $C'$ depending only on the indicated quantities. 
In particular, the H\"older estimate \eqref{eqn:20180823} holds uniformly in $p\in \Hyp$. 
As in the proof of \mbox{Lemma \ref{lem:hoelderestimate}} we obtain 
\begin{align*}
\nm{u^{\eta}}_{\Ct^{2,\alpha;1,\frac{\alpha}{2}}(Q_r)} \leq C''(m,T_{*},\sup u_0). 
\end{align*}
Consequently, the scalar curvature $\Rsc=-(m-1)u^{-\eta-1}(\frac{1}{\eta}\Delta u^{\eta}+m u^{\eta})$ stays uniformly bounded up to time $T$ by some constant $K_1(m,T_{*},\sup u_0)$. 
With the same methods as before, we can extend the solution, first locally in bounded domains and then via exhaustion in all of $\Hyp$.  
As initial data, we choose $u_1=u(\cdot,T-\varepsilon)$. 
This allows us build compatible boundary data from a suitable extension of $u|_{\Hyp\times[T-\varepsilon,T]}$. 
It also ensures the extended solution to be regular by Lemma \ref{lem:Uniquenessonboundeddomains}. 
The extended solution is defined in $\Hyp\times[0,T-\varepsilon+T_1]$ with any given $T_1<\frac{1}{K_1}$. 
If $\varepsilon>0$ is chosen small enough such that 
\begin{align*}
T-\varepsilon+T_1=T_{*}-2\varepsilon+T_1>T_{*}
\end{align*}
we obtain a contradiction to the maximality of $T_{*}$. 
\end{proof}


\section{Uniqueness}

This section contains the proofs of Theorems \ref{thm:uniqueness} and \ref{thm:nonexistence}. 
As before, $(\Hyp,g_{\Hyp})$ denotes hyperbolic space of dimension $m\geq3$ and 
$g_{\Eucl}=h^{-2}g_{\Hyp}$ the pullback of the Euclidean metric to $\Hyp$, where $h>0$ is a smooth function provided by the Poincar\'{e} ball model. 

\subsection{Upper and lower bounds}

The Yamabe flow constructed in the previous section satisfies the upper and lower bounds given in \eqref{eqn:180824}. 
The aim of this section is to find conditions under which any Yamabe flow necessarily satisfies such bounds. 

\begin{prop}[Upper bound]\label{prop:upper}
Let $\eta=\tfrac{m-2}{4}$. 
If $g(t)=u(\cdot,t)g_{\Hyp}$ for $t\in[0,T]$ is any Yamabe flow on $\Hyp$ with initial data $g(0)\leq b\, g_{\Eucl}$ for some constant $b\in\R$, then   
\begin{align*}
\bigl(u(\cdot,t)\bigr)^{\eta}
&\leq\bigl(m(m-1)t\bigr)^{\eta}
+\bigl(b h^{-2}\bigr)^{\eta}
\end{align*}
in $\Hyp$ for every $t\in[0,T]$.  
\end{prop}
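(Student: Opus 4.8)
The plan is to exhibit an explicit supersolution of the Yamabe flow equation, assembled from two elementary solutions already in play, and to compare $u$ with it. Writing $g_{\Eucl}=h^{-2}g_{\Hyp}$ as in the paper, the hypothesis $g(0)\le b\,g_{\Eucl}$ says precisely $u_0\le bh^{-2}$, hence $u_0^{\eta}\le(bh^{-2})^{\eta}$; since $A(t)\vcentcolon=(m(m-1)t)^{\eta}$ vanishes at $t=0$, the assertion to be proved is exactly
\[
U\vcentcolon=u^{\eta}\le\bar U\vcentcolon=A+B,\qquad B\vcentcolon=(bh^{-2})^{\eta},
\]
on $\Hyp\times[0,T]$.

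The two building blocks are already available. First, $u\equiv m(m-1)t$ is a Yamabe flow (the ``big bang'' flow $m(m-1)t\,g_{\Hyp}$), so $A$ solves \eqref{eqn:Yamabe-flow-eta}; being spatially constant this reads $\tfrac{1}{m-1}A^{1/\eta}A'=m\eta A$, equivalently $A'=(m-1)m\eta\,A^{1-1/\eta}$. Second, $b\,g_{\Eucl}$ is flat, hence scalar-flat, hence a stationary Yamabe flow on $\Hyp$ (indeed $\partial_t g=-\Rsc\,g=0$); its conformal factor $bh^{-2}$ is therefore a static solution of \eqref{eqn:Yamabe-flow}, so $B$ is a static solution of \eqref{eqn:Yamabe-flow-eta}, i.e.\ $\Delta_{g_{\Hyp}}B+m\eta B=0$. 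Consequently $m\eta\bar U+\Delta_{g_{\Hyp}}\bar U=m\eta A+(m\eta B+\Delta_{g_{\Hyp}}B)=m\eta A$, and since $\bar U^{1/\eta}\ge A^{1/\eta}$ (because $B\ge0$ and $\tfrac{1}{\eta}>0$) while $A'\ge0$,
\[
\tfrac{1}{m-1}\bar U^{1/\eta}\,\partial_t\bar U=\tfrac{1}{m-1}\bar U^{1/\eta}A'\ge\tfrac{1}{m-1}A^{1/\eta}A'=m\eta A=m\eta\bar U+\Delta_{g_{\Hyp}}\bar U.
\]
Thus $\bar U$ is a supersolution of \eqref{eqn:Yamabe-flow-eta}, which — written as $\partial_t U=(m-1)(m\eta U+\Delta_{g_{\Hyp}}U)\,U^{-1/\eta}$ — is degenerate parabolic with the positive factor $(m-1)U^{-1/\eta}$ in front of the Laplacian; moreover $\bar U\ge B=\bar U|_{t=0}\ge u_0^{\eta}$.

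It remains to run the comparison $U\le\bar U$ from these facts, and this is the technical heart. On a bounded domain this is the standard parabolic maximum principle (Proposition~\ref{prop:parabolicMP}): setting $W=U-\bar U$, subtracting the two (super)equations while keeping the second-order term produces a linear parabolic differential inequality for $W$ with locally bounded coefficients, so $W$ cannot develop a positive interior maximum. To obtain the bound on the noncompact manifold $\Hyp$ I would exhaust $\Hyp$ by geodesic balls $\Bl_k$ and apply this on each $\Bl_k$, exploiting that $B=(bh^{-2})^{\eta}\to\infty$ at the conformal boundary (since $h\to0$ there): this forces $\bar U$ to dominate $u^{\eta}$ near $\partial\Bl_k$ for $k$ large and keeps the relevant maximum of $W$ in a fixed compact region, so that the limit $k\to\infty$ is harmless.

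The main obstacle is precisely this last step: making a maximum principle rigorous on $\Hyp$ in which the escape of a maximum to the ideal boundary is excluded by the divergence of $B$ at conformal infinity rather than by any a priori global bound on $u$ — so one must control the behaviour of $u$ near the conformal boundary in order to compare it with $\bar U$ there. As the paper notes, for pointwise bounds the uniformly parabolic form \eqref{eqn:Yamabe-flow} is more convenient than \eqref{eqn:Yamabe-flow-eta}, so I would carry out the barrier comparison using the $u$-equation, where the coefficient $\tfrac{m-1}{u}$ of the Laplacian is locally bounded and bounded below, leaving only the growth of $u$ near conformal infinity to be absorbed by the barrier.
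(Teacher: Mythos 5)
Your identification of the barrier is correct and matches the paper's key observation: since $b\,g_{\Eucl}$ is flat, $f:=bh^{-2}$ is a static solution of \eqref{eqn:Yamabe-flow}, so $B=f^{\eta}$ satisfies $\Delta_{g_{\Hyp}}B+m\eta B=0$, and your computation that $\bar U=A+B$ with $A=(m(m-1)t)^{\eta}$ is a supersolution of \eqref{eqn:Yamabe-flow-eta} is valid. However, the comparison step is where the proof actually lives, and your proposal does not close it — in fact you acknowledge as much. The gap is genuine and cannot be filled by the route you sketch. Exhausting by geodesic balls $\Bl_k$ requires $U\le\bar U$ on $\partial\Bl_k$, and arguing that ``$B\to\infty$ at the conformal boundary forces $\bar U$ to dominate $u^{\eta}$'' is circular: the only thing known a priori is $u_0\le bh^{-2}$ at $t=0$, and for $t>0$ the growth of $u$ near conformal infinity is precisely what the proposition is supposed to control. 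Nothing excludes $u(\cdot,t)$ growing faster than $bh^{-2}$ for $t>0$, so the maximum of $U-\bar U$ over $\Bl_k$ could escape to $\partial\Bl_k$ and the exhaustion limit would be vacuous.

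The paper circumvents this without any a priori control of $u$ near conformal infinity. It multiplies $v=u^{\eta}-f^{\eta}$ by a compactly supported cutoff $\phi=\varphi\circ\tfrac{r}{A}$ (supported in a ball of hyperbolic radius $2A$); the continuous function $v\phi$ then automatically attains a finite, interior maximum. At that maximum one derives the differential inequality
\begin{align*}
\limsup_{\tau\searrow0}\frac{\max(v\phi)(t_0)-\max(v\phi)(t_0-\tau)}{\tau}
\leq(m-1)\bigl(m\eta+\tfrac{C}{A}\bigr)\bigl(\max(v\phi)(t_0)\bigr)^{1-\frac{1}{\eta}},
\end{align*}
where the cutoff error $\tfrac{C}{A}$ is controlled by Lemma \ref{lem:cutoffest} together with the bound $\Delta r\leq 2(m-1)$ from Lemma \ref{lem:Laplacehypdist}. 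Lemma \ref{lem:backdiff2} then gives $\max(v\phi)(t)\le\bigl((m-1)(m+\tfrac{C}{\eta A})t\bigr)^{\eta}$, and only now does one let $A\to\infty$, so that $\phi\to1$ and the error vanishes. This quantitative cutoff argument is essential; you will not be able to replace it by a boundary-dominance argument on $\partial\Bl_k$, because that would presuppose the upper bound you are trying to prove. If you insist on an exhaustion formulation, you must at least add the cutoff weight and the estimate \eqref{MP-est-2}, at which point the argument is no longer a simplification but a reproduction of the paper's.
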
 

\begin{proof}
Let $\varphi\colon\R\to[0,1]$ be a smooth, non-increasing cutoff function satisfying $\varphi(x)=1$ for $x\leq1$ and $\varphi(x)=0$ for $x\geq2$. 
Introducing geodesic normal coordinates $(r,\vartheta)\in\interval{0,\infty}\times\Sp^{m-1}$ on $(\Hyp,g_{\Hyp})$ and a parameter $A>1$, we define the rotationally symmetric function $\phi=\varphi\circ\frac{r}{A}$ on $\Hyp$. 
As before, spatial derivatives and inner products are taken with respect to the hyperbolic background metric $g_{\Hyp}$. 
With any regular function $v\colon\Hyp\to\R$, we have 
\begin{align*}
\phi\Delta v 
&=\Delta(\phi v)
-\frac{2}{\phi}\sk[\big]{\nabla\phi,\nabla(\phi v)}
+\frac{2v}{\phi}\abs{\nabla\phi}^2
-v\Delta\phi. 
\end{align*}
Setting $f\vcentcolon=b h^{-2}$ and $v\vcentcolon=(u^{\eta}-f^{\eta})$, we denote   
\begin{align*}
\Delta(\phi v)
-\frac{2}{\phi}\sk[\big]{\nabla\phi,\nabla(\phi v)}
=\vcentcolon\Lambda.
\end{align*}
Since $f g_{\Hyp}=b\,g_{\Eucl}$ is a flat metric, it is a static solution to the Yamabe flow equation $\partial_tg=-\Rsc g$ which by \eqref{eqn:Yamabe-flow-eta} implies $-\Delta f^{\eta}=m\eta f^{\eta}$.
Thus, 
\begin{align}\notag
\frac{1}{m-1}\frac{\partial}{\partial t}
(u^{\eta}-f^{\eta})\phi
&=\bigl(m\eta u^{\eta} + \Delta u^{\eta}\bigr)\frac{\phi}{u} 
\\\notag
&=m\eta\frac{(u^{\eta}-f^{\eta})}{u}\phi 
+\frac{\phi\Delta(u^{\eta}-f^{\eta})}{u}
\\\label{MP-est-1}
&=m\eta\frac{(u^{\eta}-f^{\eta})}{u}\phi
+\frac{\Lambda}{u}
+\Bigl(\frac{2\abs{\nabla\phi}^2}{\phi}-\Delta\phi\Bigr)
\frac{(u^{\eta}-f^{\eta})}{u}.
\end{align}
Since $\nabla\phi=\frac{1}{A}(\varphi'\circ\frac{r}{A})\nabla r$, we have 
\begin{align*}
\frac{2\abs{\nabla\phi}^2}{\phi}-\Delta\phi
&=\Bigl(\frac{2\varphi'^2}{A^2\varphi}
-\frac{\varphi''}{A^2}\Bigr)\circ(\tfrac{r}{A})
-\frac{\varphi'\circ\frac{r}{A}}{A}\Delta r.
\end{align*}
As $\varphi'\circ\frac{r}{A}$ is non-positive in $\Hyp$ and identically zero in the unit ball around the origin, we may apply Lemma \ref{lem:Laplacehypdist} stating $\Delta r\leq2(m-1)$ and Lemma \ref{lem:cutoffest} about cutoff functions (both given in the appendix) to estimate  
\begin{align}
\label{MP-est-2}
\Bigl(\frac{2\abs{\nabla\phi}^2}{\phi}-\Delta\phi\Bigr)
&\leq\frac{1}{A}\Bigl(\frac{2\varphi'^2}{A\varphi}
+\frac{\abs{\varphi''}}{A}+2(m-1)\abs{\varphi'}\Bigr)\circ(\tfrac{r}{A})
\leq \frac{C}{A}\phi^{1-\frac{1}{\eta}}
\end{align}
at points where $\phi\neq0$ with a constant $C$ depending only on the dimension and the choice of $\varphi$. 
Let time $t_0\in\intervaL{0,T}$ be fixed. 
The function $[(u^{\eta}-f^{\eta})\phi](\cdot,t_0)$ has a global, non-negative maximum at some point $q_{0}\in\Hyp$ which depends on $t_0$ and the parameters $A$ and $b$. 
If $[(u^{\eta}-f^{\eta})\phi](q_0,t_0)>0$, we have
\begin{align*}
\frac{u^{\eta}-f^{\eta}}{u}(q_0,t_0)
\leq\frac{u^{\eta}-f^{\eta}}{(u^{\eta}-f^{\eta})^{\frac{1}{\eta}}}(q_0,t_0)
=(u^{\eta}-f^{\eta})^{1-\frac{1}{\eta}}(q_0,t_0).
\end{align*}
Equation \eqref{MP-est-1} and estimate \eqref{MP-est-2} then yield 
\begin{align*}
&\Bigl[\frac{1}{m-1}\frac{\partial}{\partial t}
(u^{\eta}-f^{\eta})\phi\Bigr](q_{0},t_0)
\\
&\leq \Bigl[m\eta(u^{\eta}-f^{\eta})^{1-\frac{1}{\eta}}\phi
+\tfrac{C}{A}\phi^{1-\frac{1}{\eta}}
(u^{\eta}-f^{\eta})^{1-\frac{1}{\eta}}
\Bigr](q_{0},t_0)
\\
&\leq \Bigl[(m\eta+\tfrac{C}{A})\bigl((u^{\eta}-f^{\eta})\phi\bigr)^{1-\frac{1}{\eta}}
\Bigr](q_{0},t_0).
\end{align*}
Denoting $v=u^{\eta}-f^{\eta}$ as before, we obtain 
\begin{align*}
&\limsup_{\tau\searrow0}
\frac{1}{\tau}\Bigl(\max {(v\phi)}(t_0)
-\max {(v\phi)}(t_0-\tau)\Bigr)
\\ 
&\leq \limsup_{\tau\searrow0}
\frac{1}{\tau}\Bigl((v\phi)(q_{0},t_0 )-(v\phi)(q_0,t_0-\tau) \Bigr)
=\tfrac{\partial}{\partial t}(v\phi)(q_0,t_0)
\\
&\leq(m-1)(m\eta+\tfrac{C}{A})
\bigl(\max(v\phi)(t_0)\bigr)^{1-\frac{1}{\eta}}. 
\end{align*}
The assumption $g(0)\leq b\, g_{\Eucl}$ implies $\max(u^{\eta}-f^{\eta})(0)\leq0$. 
Since $t_0>0$ is arbitrary, 
we may apply Lemma \ref{lem:backdiff2} stated in the appendix to conclude  
\begin{align}\label{eqn:20190311}
\max\bigl((u^{\eta}-f^{\eta})\phi\bigr)(t)
\leq \bigl((m-1)(m+\tfrac{C}{\eta A})t\bigr)^{\eta}. 
\end{align}
Letting $A\to\infty$ such that $\phi\to1$ pointwise on $\Hyp$, we obtain 
\begin{align*}            
u^{\eta}(\cdot,t)\leq\bigl(m(m-1)t\bigr)^{\eta}+f^{\eta}
\end{align*}
in $\Hyp$ for all $t\in[0,T]$. 
\end{proof}

A similar approach as for Proposition \ref{prop:upper} leads to a proof of Theorem \ref{thm:nonexistence}. 

\begin{proof}[Proof of Theorem \ref{thm:nonexistence}]
Suppose, $g(t)=u(\cdot,t)g_{\R^m}$ is a Yamabe flow on $\R^m$ for $t\in[0,T]$ with $g(0)=g_0$. 
Then
\begin{align}\label{eqn:euclYamabeflow}
\tfrac{1}{m-1}\tfrac{\partial}{\partial t}u^{\eta}&=\tfrac{1}{u}\Delta u^{\eta},
\end{align}
where $\eta=\frac{m-2}{4}$ and $\Delta$ denotes the Euclidean Laplacian.  
In the proof of Proposition~\ref{prop:upper}, we replace the equation for $u^{\eta}$ by \eqref{eqn:euclYamabeflow} and $f$ by $f(x)=b\abs{x}^{-4}$. Then, 
$f^{\eta}(x)=b^{\eta}\abs{x}^{2-m}$ is harmonic on $\R^m\setminus\{0\}$ which implies 
\begin{align*}
\tfrac{1}{m-1}\tfrac{\partial}{\partial t}(u^{\eta}-f^{\eta})&=\tfrac{1}{u}\Delta (u^{\eta}-f^{\eta}) \quad\text{ in $(\R^m\setminus\{0\})\times[0,T]$.}
\end{align*}
With a cutoff function $\phi$ as in \eqref{MP-est-2}, we gain   
\begin{align*}
[(u^{\eta}-f^{\eta})\phi](\cdot,t) &\leq\bigl((m-1)\tfrac{C t}{\eta A} \bigr)^{\eta}
\end{align*}
from the assumption $u(\cdot,0)\leq f$ as in \eqref{eqn:20190311}. 
Letting $A\to\infty$, we conclude $u(x,t)\leq f(x)$ for every $(x,t)\in\R^m\times\Interval{0,T}$. 
In particular, the $g(t)$-length of radial curves $\gamma(r)=r\sigma$ emitting from $\sigma\in\Sp^{m-1}\subset \R^m$ is estimated by 
\begin{align*}
\int^{\infty}_{1}\sqrt{u(r\sigma,t)}\,\dd r
\leq \sqrt{b}\int^{\infty}_{1}r^{-2}\,\dd r
= \sqrt{b}<\infty 
\end{align*}
which means that $(\R^m,g(t))$ is geodesically incomplete. 
\end{proof}

\begin{prop}[Lower barrier, rotationally symmetric case]
\label{prop:lower}
Let $(g(t))_{t\in[0,T]}$ be a conformally hyperbolic, instantaneously complete Yamabe flow on $\Hyp$. 
Under the assumption, that $g(t)$ is rotationally symmetric around some point $x_0\in\Hyp$, we have
\begin{align*}
\forall t\in[0,T]:\quad
m(m-1)\,t\,g_{\Hyp}\leq g(t).
\end{align*}
\end{prop}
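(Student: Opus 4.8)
The plan is to compare the flow from below with the explicit ``big bang'' solution $\bar u(t)\vcentcolon=m(m-1)t$ of~\eqref{eqn:Yamabe-flow}, which has vanishing initial value. Writing $g(t)=u(\cdot,t)\,g_{\Hyp}$ and using the rotational symmetry around $p_0$, I would work with $u=u(r,t)$ in geodesic polar coordinates $(r,\vartheta)\in\interval{0,\infty}\times\Sp^{m-1}$ on $(\Hyp,g_{\Hyp})$, so that the claim becomes $u(r,t)\geq m(m-1)t$ for all $r\geq0$ and $t\in[0,T]$, while instantaneous completeness of $g(t)$ translates into $\int_{0}^{\infty}\sqrt{u(r,t)}\,\dd r=\infty$ for every $t>0$.

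First I would introduce $\mu(t)\vcentcolon=\inf_{\Hyp}\bigl(u(\cdot,t)-m(m-1)t\bigr)$ and try to prove $\mu\geq0$ on $[0,T]$; note $\mu(0)=\inf_{\Hyp}u_0\geq0$ since $g(0)$ is a metric. The mechanism is that the constant source term $+m$ in~\eqref{eqn:Yamabe-flow} beats the diffusion terms at a spatial minimum: if $\mu(t_0)<0$ were attained at a point $q_0\in\Hyp$ at finite distance from $p_0$, then $\nabla u(q_0,t_0)=0$ and $\Delta_{g_{\Hyp}}u(q_0,t_0)\geq0$, so~\eqref{eqn:Yamabe-flow} gives
\[
\tfrac{1}{m-1}\,\tfrac{\partial}{\partial t}\bigl(u-m(m-1)t\bigr)=\tfrac{\Delta_{g_{\Hyp}}u}{u}\geq 0\qquad\text{at }(q_0,t_0).
\]
Reading this as a lower bound for the backward difference quotient of $\mu$, exactly as in the proof of Proposition~\ref{prop:upper}, and applying Lemma~\ref{lem:backdiff2} to $-\mu$, one would deduce that $\mu$ cannot drop below zero; together with $\mu(0)\geq0$ this gives $\mu\geq0$ --- \emph{provided} the infimum defining $\mu(t_0)$ is attained at a finite point whenever it is negative.

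The remaining, and decisive, case is that $\mu(t_0)<0$ is only approached as $r\to\infty$; this is where instantaneous completeness must enter. Here I would localise in the manner of Proposition~\ref{prop:upper}: choosing a smooth non-increasing cutoff $\varphi\colon\R\to[0,1]$ with $\varphi(x)=1$ for $x\leq1$ and $\varphi(x)=0$ for $x\geq2$, setting $\phi\vcentcolon=\varphi\circ\tfrac{r}{A}$ for a parameter $A>1$, and studying $\mu_A(t)\vcentcolon=\inf_{\Hyp}\bigl(\phi\cdot(u-m(m-1)t)\bigr)$, which \emph{is} attained because $\phi$ has compact support. Repeating the minimum computation at the minimiser of $\phi\cdot(u-m(m-1)t)$ and estimating the terms produced by $\nabla\phi$ and $\Delta_{g_{\Hyp}}\phi$ by means of Lemma~\ref{lem:Laplacehypdist} ($\Delta_{g_{\Hyp}}r\leq 2(m-1)$ outside the unit ball $\Bl_1$) and Lemma~\ref{lem:cutoffest}, I expect a differential inequality $\tfrac{\dd^{-}}{\dd t}\mu_A\geq -E_A$ from which the assertion would follow upon letting $A\to\infty$, so that $\phi\to1$ and $\mu_A\to\mu$, once $E_A\to0$.

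The main obstacle is precisely this last limit. Because the nonlinearity in~\eqref{eqn:Yamabe-flow} contains $\tfrac1u$ and $\tfrac{1}{u^2}$, the cutoff error $E_A$ is governed by negative powers of $u$ on the transition region $\{A\leq r\leq 2A\}$, and hence tends to $0$ only if $u(\cdot,t)$ does not degenerate there. Instantaneous completeness, $\int_{0}^{\infty}\sqrt{u(r,t)}\,\dd r=\infty$, is exactly the hypothesis that forbids the conformal factor from decaying too fast at infinity, and I would use it --- together with local parabolic estimates to obtain uniformity for $t$ in compact subsets of $\interval{0,T}$ --- to select the parameter along a sequence $A_j\to\infty$ with $E_{A_j}\to0$. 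That completeness is indispensable is confirmed by Theorem~\ref{thm:nonexistence}, which produces incomplete conformally flat Yamabe flows violating the analogous lower bound. Another possible route to the same conclusion is to compare $g(t)$ on $[\varepsilon,T]$ with the time-shifted solution $m(m-1)(t-\varepsilon)\,g_{\Hyp}$, whose initial metric at time $\varepsilon$ vanishes and thus lies below $g(\varepsilon)$, and then to let $\varepsilon\searrow0$; here completeness of $g(\varepsilon)$ for $\varepsilon>0$ is what makes the comparison on the non-compact manifold $\Hyp$ legitimate.
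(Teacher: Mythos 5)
Your intuition --- compare with the big-bang solution, localize with a cutoff, and let completeness control what happens at infinity --- is correct, and so is your observation that completeness is the indispensable hypothesis. But the execution has a genuine gap that the paper's proof is designed precisely to avoid. Working directly with $u-m(m-1)t$ and a \emph{compactly supported} cutoff does not close: at the infimum of $\phi\cdot(u-m(m-1)t)$ the error terms produced by $\nabla\phi$ and $\Delta_{g_\Hyp}\phi$ couple to $u$, $\nabla u$, $1/u$ and $1/u^2$ on the annulus $\{A\leq r\leq 2A\}$, and the integral condition $\int_0^\infty\sqrt{u(r,t)}\,\dd r=\infty$ gives no pointwise control there, so there is no mechanism to force $E_A\to 0$ --- as you yourself concede. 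Your alternative route (comparing with the time-shifted big bang via Theorem~\ref{thm:uniqueness}) is circular: hypothesis~\ref{cond:Uniqueness-ii} there is a time-shifted form of exactly the lower barrier you want to prove.

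The paper's proof replaces both ingredients. It substitutes $v=1/u$, so the claim becomes the \emph{upper} bound $v(\cdot,t)\leq\tfrac{1}{m(m-1)t}$, and equation~\eqref{eqn:v} carries a $-m\,v^2$ term that turns the maximum-principle step into a clean Riccati comparison (Lemma~\ref{lem:backdiff}). And, crucially, it uses a cutoff $\varphi\colon\R\to\intervaL{0,1}$ that decays like $\tfrac12\ex^{2(2-x)}$ for $x>2$, so $\phi=\varphi\circ\tfrac{r}{A}$ is \emph{nowhere zero}. Completeness then enters exactly once, and quite differently from your sketch: if $\phi v$ had no interior maximum in any $\Bl_R$, then (by rotational symmetry, reducing to the radial profile) $\phi v$ would be non-decreasing beyond some $r_0$, whence $1/\sqrt{v}=\sqrt{\phi}/\sqrt{\phi v}$ decays exponentially in $r$, giving a radial curve of finite $g(t_0)$-length and contradicting completeness of $(\Hyp,g(t_0))$. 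A compactly supported cutoff destroys this argument, since then $\phi v\equiv 0$ for large $r$ regardless of $v$, so an interior maximum of $\phi v$ conveys no information. Once the interior maximum $q_R$ exists, the cutoff errors are of order $1/A$ relative to $-m\phi v^2$ thanks to $\abs{\varphi'}\leq 2\varphi$, $\abs{\varphi''}\leq 4\varphi$ and Lemma~\ref{lem:Laplacehypdist}, and letting $R\to\infty$ and then $A\to\infty$ yields the claim.
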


The proof of Proposition \ref{prop:lower} is based on the following Lemma.

\begin{lem}\label{lem:time-distance}
Let $(g(t))_{t\in[0,T]}$ be an instantaneously complete Yamabe flow on $\Hyp$ given by $g(t)=u(\cdot,t)g_{\Hyp}$ such that $u(\cdot,t)$ depends only on the hyperbolic distance $r$ from some point $x_0\in\Hyp$ for every $t\in\Hyp$. 
Let $\varrho\colon\Hyp\times\intervaL{0,T}\to\Interval{0,\infty}$ be given by 
\begin{align*}
\varrho(r,t)&=\begin{cases}\displaystyle\int^{r}_{a}\sqrt{u(s,t)}\,\dd s & \text{ if } r>a
\vcentcolon=\operatorname{artanh}\bigl(\frac{m-1}{m}\bigr), 
\\
0 & \text{ else  }
\end{cases}
\end{align*}
and let $A\vcentcolon=\max\limits_{t\in[0,T]}\abs[\big]{\frac{\partial}{\partial r}u^{-\frac{1}{2}}(a,t)}$. 
Then, whenever $r>a$, 
\begin{align*}
\biggl(\frac{1}{m-1}\frac{\partial\varrho}{\partial t}
-\Delta_{g}\varrho \biggr)
\geq -m u^{-\frac{1}{2}}
+(m-2)\frac{\partial u^{-\frac{1}{2}}}{\partial r}
-A.
\end{align*}
\end{lem}

\begin{proof}
Given a conformal metric $g=u g_{\Hyp}$ on $\Hyp$ and any smooth function $f\colon\Hyp\to\R$,  
\begin{align}
\Delta_{g}f
&=\frac{1}{\sqrt{\abs{\det{g}}}}
\partial_j\Bigl(\sqrt{\abs{\det{g}}}\,{g}^{ij}\partial_i f\Bigr)
=\frac{1}{u^{\frac{m}{2}}\sqrt{\abs{\det g_{\Hyp}}}}
\partial_j\Bigl(u^{\frac{m}{2}-1}\sqrt{\abs{\det g_{\Hyp}}}\, g_{\Hyp}^{ij}\partial_i f\Bigr)
\notag\\\label{eqn:Laplace-conformal}
&=\frac{1}{u}\Delta_{g_{\Hyp}}f+\frac{m-2}{2u^2}\sk{\nabla u,\nabla f}_{g_{\Hyp}}.
\end{align}
In the case $f=\varrho(\cdot,t)$ and since $\Delta_{g_{\Hyp}}r=\frac{m-1}{\tanh r}$ by Lemma~\ref{lem:Laplacehypdist}, we have
\begin{align*}
\Delta_{g}\varrho
&=\frac{1}{u}\Delta_{g_{\Hyp}}\varrho+\frac{m-2}{2u^2}\sk{\nabla u,\nabla\varrho}_{g_{\Hyp}}
=\frac{1}{u}\frac{\partial^2\varrho}{\partial r^2}+\frac{(m-1)}{u\tanh r}\frac{\partial\varrho}{\partial r}
+\frac{(m-2)}{2u^2}\frac{\partial u}{\partial r}\frac{\partial\varrho}{\partial r}
\end{align*}
where we used the assumption of rotational symmetry. 
If $r>a$, then
\begin{align}\notag
\Delta_{g}\varrho
=\frac{1}{u}\frac{\partial\sqrt{u}}{\partial r}
+\frac{(m-1)}{\sqrt{u}\tanh r}
+\frac{(m-2)}{2u\sqrt{u}}\frac{\partial u}{\partial r} 
&=\frac{(m-1)}{\sqrt{u}\tanh r}
+\frac{(m-1)}{2u\sqrt{u}}\frac{\partial u}{\partial r}
\\\label{eqn:20190208-1}
&\leq m u^{-\frac{1}{2}}
-(m-1)\frac{\partial u^{-\frac{1}{2}}}{\partial r} 
\end{align}
since $(\tanh r)\geq\frac{m-1}{m}$ for $r>a$ by definition of $a$. 
From equation \eqref{eqn:Yamabe-flow} for $u$, we conclude
\begin{align*}
\frac{1}{m-1}\frac{\partial\sqrt{u}}{\partial t}
&=\frac{m}{2\sqrt{u}}
+\frac{\Delta_{g_{\Hyp}}u}{2u\sqrt{u}}
+\frac{(m-6)}{8\sqrt{u}}\frac{\abs{\nabla u}_{g_{\Hyp}}^2}{u^2}.
\end{align*}
Since $\Delta_{g_{\Hyp}} u^{-\frac{1}{2}}
=-\tfrac{1}{2}u^{-\frac{3}{2}}\Delta_{g_{\Hyp}} u
+\tfrac{3}{4}u^{-\frac{5}{2}}\abs{\nabla u}_{g_{\Hyp}}^2$ we have in fact
\begin{align*}
\frac{1}{m-1}\frac{\partial\sqrt{u}}{\partial t}
&=\frac{m}{2\sqrt{u}}
-\Bigl(\Delta_{g_{\Hyp}}\frac{1}{\sqrt{u}}\Bigr)
+\frac{m}{8\sqrt{u}}\frac{\abs{\nabla u}_{g_{\Hyp}}^2}{u^2}. 
\end{align*}
Moreover, with 
\begin{align*}
\frac{m}{8\sqrt{u}}\frac{\abs{\nabla u}_{g_{\Hyp}}^2}{u^2}
&=\frac{m\abs[\big]{2u^{1+\frac{1}{2}}\nabla u^{-\frac{1}{2}}}_{g_{\Hyp}}^2}{8u^{2+\frac{1}{2}}}
=\frac{m}{2}\sqrt{u}\abs[\big]{\nabla u^{-\frac{1}{2}}}_{g_{\Hyp}}^2
\end{align*}
we arrive at 
\begin{align*}
\frac{1}{m-1}\frac{\partial\sqrt{u}}{\partial t}
&=\frac{m}{2\sqrt{u}}
-\Bigl(\Delta_{g_{\Hyp}}\frac{1}{\sqrt{u}}\Bigr)
+\frac{m}{2}\sqrt{u}\abs[\big]{\nabla u^{-\frac{1}{2}}}_{g_{\Hyp}}^2 
\\
&=\frac{m}{2\sqrt{u}}
-\frac{\partial^2 u^{-\frac{1}{2}}}{\partial r^2}
-\frac{m-1}{\tanh r}\frac{\partial u^{-\frac{1}{2}}}{\partial r}
+\frac{m\sqrt{u}}{2}\abs[\bigg]{\frac{\partial u^{-\frac{1}{2}}}{\partial r}}^2.
\end{align*}
For any $b,c>0$ and all $X\in\R$ the inequality
\begin{align}\label{eqn:square}
-b X+c X^2\geq-\frac{b^2}{4c}
\end{align}
follows by completing the square.  
We apply \eqref{eqn:square} with $X=\frac{\partial}{\partial r}u^{-\frac{1}{2}}$, $b=\frac{m-1}{\tanh r}$ and $c=\frac{1}{2}m\sqrt{u}$ to estimate
\begin{align*}
\frac{1}{m-1}\frac{\partial\sqrt{u}}{\partial t}
&\geq\frac{m}{2\sqrt{u}}
-\frac{\partial^2 u^{-\frac{1}{2}}}{\partial r^2}
-\frac{(m-1)^2}{2m(\tanh r)^2\sqrt{u}}
\geq-\frac{\partial^2 u^{-\frac{1}{2}}}{\partial r^2}
\end{align*}
where the last inequality requires $(\tanh r)\geq\frac{m-1}{m}$ which holds by construction whenever $r\geq a$. 
Hence, 
\begin{align}\label{eqn:integrate}
\frac{1}{m-1}\frac{\partial\varrho}{\partial t}(r,t)
&\geq \frac{\partial u^{-\frac{1}{2}}}{\partial r}(a,t)
-\frac{\partial u^{-\frac{1}{2}}}{\partial r}(r,t). 
\end{align}
The claim follows by subtracting \eqref{eqn:20190208-1} from \eqref{eqn:integrate}. 
\end{proof}

\begin{proof}[Proof of Proposition \ref{prop:lower}]
Let $(g(t))_{t\in\Interval{0,T}}$ be an instantaneously complete Yamabe flow on $\Hyp$ given by $g(t)=u(\cdot,t)g_{\Hyp}$ and let $v=\frac{1}{u}$. 
The goal is to prove the uniform estimate $v(\cdot,t)\leq\frac{1}{m(m-1)t}$ for every $t\in\interval{0,T}$. 
Equation \eqref{eqn:Yamabe-flow} implies that $v$ satisfies 
\begin{align}\notag
\frac{-v^{-2}}{m-1}
\frac{\partial v}{\partial t}
&=m+\bigl(2v^{-3}\abs{\nabla v}_{g_{\Hyp}}^2-v^{-2}\Delta_{g_{\Hyp}} v\bigr)v
+\frac{m-6}{4}v^{2}\abs[]{v^{-2}\nabla v}_{g_{\Hyp}}^2
\intertext{and hence evolves by the equation }
\label{eqn:v}
\frac{1}{m-1}\frac{\partial v}{\partial t}
&=-m v^2+v\Delta_{g_{\Hyp}} v-\frac{m+2}{4}\abs{\nabla v}_{g_{\Hyp}}^2.
\end{align}
Let $\eta=\frac{m-2}{4}$ provided that $m\geq3$. 
Applying equation \eqref{eqn:Laplace-conformal} to $g_{\Hyp}=v g$, we have
\begin{align*}
v\Delta_{g_{\Hyp}}v-\frac{m+2}{4}\abs{\nabla v}_{g_{\Hyp}}^2
&=\Delta_{g}v+\Bigl(\frac{m-2}{2}-\frac{m+2}{4}\Bigr) \abs{\nabla v}_{g_{\Hyp}}^2
\\
&=\Delta_{g}v+(\eta-1)\tfrac{1}{v}\abs{\nabla v}_{g}^2
=\tfrac{1}{\eta}v^{1-\eta}\Delta_g v^{\eta}. 
\end{align*}
Hence, equation \eqref{eqn:v} implies
\begin{align}\label{eqn:v-eta}
\frac{1}{m-1}\frac{\partial v^{\eta}}{\partial t}&=-m\eta v^{\eta+1}+\Delta_{g} v^{\eta}
\end{align}	
where we stress the fact, that \eqref{eqn:v-eta} involves the Laplace--Beltrami operator $\Delta_g$ with respect to the time-dependent metric $g(t)$.  

By assumption, there exists a point $x_0\in\Hyp$ such that $u(\cdot,t)$ and $v(\cdot,t)$ depend only on the hyperbolic distance $r$ from $x_0$ for all $t\in[0,T]$. 
Let $\varphi\colon\R\to[0,1]$ be a smooth, nonincreasing cutoff function as in Lemma \ref{lem:cutoffest} satisfying 
$\varphi(s)=1$ if $s\leq 1$ and $\varphi(s)=0$ if $s\geq2$. 
Let $\varrho\colon\Hyp\times\intervaL{0,T}\to\Interval{0,\infty}$ be defined as in Lemma~\ref{lem:time-distance}. 
Given $0<\varepsilon<\frac{1}{4}$, we introduce the functions  
\begin{align}\label{eqn:20190208-4}
\chi  &\vcentcolon=             \varphi  \circ(\varepsilon\varrho), & 
\chi' &\vcentcolon=\varepsilon  \varphi' \circ(\varepsilon\varrho), & 
\chi''&\vcentcolon=\varepsilon^2\varphi''\circ(\varepsilon\varrho)
\end{align}
which are smooth in $\Hyp\times\intervaL{0,T}$ because $\varphi$ is constant around zero. 
We remark that while $\varphi'$ denotes the actual first derivative of the function $\varphi$ of one variable, $\chi'$ is just a convenient shorthand notation.  
In fact, $\abs{\nabla\chi}_{g}^2=\abs{\chi'\nabla\varrho}_{g}^2=\abs{\chi'}^2$ and $\Delta_{g}\chi=\chi''+\chi'\Delta_g\varrho$. 
Hence, 
\begin{align}\notag
&\frac{1}{m-1}\frac{\partial(\chi v^{\eta})}{\partial t}
=\frac{v^{\eta}}{m-1}\frac{\partial\chi}{\partial t}
-m\eta\chi v^{\eta+1}
+\Delta_g(\chi v^{\eta})
-v^{\eta}\Delta_g\chi
-2\sk{\nabla\chi,\nabla v^{\eta}}_g
\\[1ex]
&=\biggl(\frac{1}{m-1}\frac{\partial\varrho}{\partial t}-\Delta_{g}\varrho\biggr)\chi'v^{\eta}
-m\eta\chi v^{\eta+1}
+\Delta_g(\chi v^{\eta})
\label{eqn:20190208-2}
-\chi'' v^{\eta}
-2\sk{\nabla\chi,\nabla v^{\eta}}_g.
\end{align}
Recall that $\chi'\leq0$. 
With the estimate from Lemma \ref{lem:time-distance}, we have 
\begin{align}
\biggl(\frac{1}{m-1}\frac{\partial\varrho}{\partial t}-\Delta_{g}\varrho\biggr)\chi'v^{\eta}
\leq  -m\chi'v^{\eta+\frac{1}{2}}
+(m-2)\chi'v^{\eta}\frac{\partial v^{\frac{1}{2}}}{\partial r}
-A\chi' v^{\eta}.
\end{align}
Surprisingly, since $\frac{\partial}{\partial r}\chi=\chi'\sqrt{u}$, the term 
\begin{align*}
(m-2)\chi'v^{\eta}\frac{\partial v^{\frac{1}{2}}}{\partial r}
&=2\chi'v^{\frac{1}{2}}\frac{\partial v^{\eta}}{\partial r}
=2v\frac{\partial\chi}{\partial r}\frac{\partial v^{\eta}}{\partial r}
=2\sk{\nabla\chi,\nabla v^{\eta}}_g
\end{align*}
cancels the last term in \eqref{eqn:20190208-2}. 
Thus,
\begin{align}
\frac{1}{m-1}\frac{\partial(\chi v^{\eta})}{\partial t}
&\leq -m\chi' v^{\eta+\frac{1}{2}}
-A\chi' v^{\eta}
-m\eta\chi v^{\eta+1}
+\Delta_g(\chi v^{\eta})
-\chi'' v^{\eta}.
\end{align}
By Lemma \ref{lem:cutoffest}, we may choose $\varphi$ such that there exists a constant $C$ depending only on $m$ such that 
\begin{align*}
\abs{m\varphi'}&\leq C\varphi^{\frac{\eta+\frac{1}{2}}{\eta+1}}, &
\abs{\varphi'}+\abs{\varphi''}&\leq C\varphi^{\frac{\eta}{\eta+1}}.
\end{align*}
With this choice, 
\begin{align*}
\frac{1}{m-1}\frac{\partial(\chi v^{\eta})}{\partial t}
&\leq C\varepsilon \chi^{\frac{\eta+\frac{1}{2}}{\eta+1}} v^{\eta+\frac{1}{2}}
+(A+\varepsilon)C\varepsilon\chi^{\frac{\eta}{\eta+1}}v^{\eta}
-m\eta\chi v^{\eta+1}+\Delta_g(\chi v^{\eta}).
\end{align*}
By Young's inequality, $ab\leq \frac{a^p}{p}+\frac{b^q}{q}$ for any $a,b\geq0$ and $p,q>1$ with $\frac{1}{p}+\frac{1}{q}=0$.  
We apply it with $p=2\eta+2$ to estimate
\begin{align*}
C\varepsilon \chi^{\frac{\eta+\frac{1}{2}}{\eta+1}} v^{\eta+\frac{1}{2}}
&\leq C^{2\eta+2}\varepsilon + \varepsilon\chi v^{\eta+1}
\end{align*}
and with $p=\eta+1$ to obtain
\begin{align*}
(A+\varepsilon)C\varepsilon\chi^{\frac{\eta}{\eta+1}}v^{\eta}
&\leq \bigl((A+1)C\bigr)^{\eta+1}\varepsilon + \varepsilon \chi v^{\eta+1}.
\end{align*}
Consequently, introducing the constant $\tilde{C}=C^{2\eta+2}+\bigl((A+1)C\bigr)^{\eta+1}$,
\begin{align}\label{eqn:20190208-3}
\frac{1}{m-1}\frac{\partial(\chi v^{\eta})}{\partial t}
&\leq \tilde{C}\varepsilon
-\bigl(m\eta-2\varepsilon\bigr)\chi v^{\eta+1}
+\Delta_g(\chi v^{\eta}).
\end{align}
Provided that $0<\varepsilon<\frac{1}{4}$, the term involving $\chi v^{\eta+1}$ in \eqref{eqn:20190208-3} has a negative sign. In this case, since $0\leq\chi\leq1$, we may replace \eqref{eqn:20190208-3} by 
\begin{align}
\frac{1}{m-1}\frac{\partial(\chi v^{\eta})}{\partial t}
&\leq \tilde{C}\varepsilon
-\bigl(m\eta-2\varepsilon\bigr)(\chi v^{\eta})^{\frac{\eta+1}{\eta}}
+\Delta_g(\chi v^{\eta}).
\end{align}
The assumption of instantaneous completeness of $g(t)$ implies that $\varrho(r,t)\to\infty$ as $r\to\infty$ for every $t\in\intervaL{0,T}$. 
Therefore, $\chi =\varphi\circ(\varepsilon\varrho)$ is compactly supported in $\Hyp$ for every $t\in\intervaL{0,T}$ and $w\colon\intervaL{0,T}\to\interval{0,\infty}$ given by 
\begin{align*}
w(t)\vcentcolon=\max_{\Hyp}(\chi v^{\eta})(\cdot,t)
\end{align*}
is well-defined. 
Let $t_0\in\intervaL{0,T}$ be arbitrary. 
Let $q_0\in\Hyp$ such that $w(t_0)=\chi v^{\eta}(q_0,t_0)$. 
We compute 
\begin{align*}
&\frac{1}{m-1}\liminf_{\tau\searrow0}\frac{1}{\tau}\biggl(\bigl(w(t_0)\bigr)^{-\frac{1}{\eta}}-\bigl(w(t_0-\tau)\bigr)^{-\frac{1}{\eta}}\biggr) 
\\
&\geq\frac{1}{m-1}\frac{\partial(\chi v^{\eta})^{-\frac{1}{\eta}}}{\partial t} (q_0,t_0)
=-\frac{(\chi v^{\eta})^{-\frac{\eta+1}{\eta}}}{\eta(m-1)}\frac{\partial(\chi v^{\eta})}{\partial t} (q_0,t_0)
\\
&\geq-\frac{1}{\eta}\tilde{C}\varepsilon\bigl(w(t_0)\bigr)^{-\frac{\eta+1}{\eta}}
+\bigl(m-\tfrac{2}{\eta}\varepsilon\bigr)
\end{align*}
where $-\Delta_{g}(\chi v^{\eta})(q_0,t_0)\geq 0$ since $q_0$ is a maximum. 
We conclude that either 
\begin{align*}
\sqrt{\varepsilon}-\tilde{C}\varepsilon \bigl(w(t_0)\bigr)^{-\frac{\eta+1}{\eta}}<0
\end{align*}
which is equivalent to $\bigl(w(t_0)\bigr)^{\frac{\eta+1}{\eta}}<\tilde{C}\sqrt{\varepsilon}$, or 
\begin{align*}
\liminf_{\tau\searrow0}\frac{1}{\tau}\Bigl(\bigl(w(t_0)\bigr)^{-\frac{1}{\eta}}-\bigl(w(t_0-\tau)\bigr)^{-\frac{1}{\eta}}\Bigr) 
&\geq(m-1)\bigl(m-\tfrac{3}{\eta}\sqrt{\varepsilon}\bigr)
\end{align*}
which shows that $w$ is decreasing as long as $w^{\frac{\eta+1}{\eta}}\geq\tilde{C}\sqrt{\varepsilon}$. 
Hence, 
\[
\bigl\{t\in\interval{0,T}\mid\bigl(w(t)\bigr)^{\frac{\eta+1}{\eta}}>\tilde{C}\sqrt{\varepsilon}\bigr\}
=\interval{0,\beta}
\]
for some $\beta\in\interval{0,T}$ because the map $t\mapsto w(t)$ is continuous.  
By Lemma \ref{lem:backdiff}, we have 
\begin{align*}
\bigl(w(t)\bigr)^{-\frac{1}{\eta}}\geq(m-1)\bigl(m-\tfrac{3}{\eta}\sqrt{\varepsilon}\bigr)t
\end{align*}
for every $t\in\interval{0,\beta}$. 
For all $t\in\interval{0,T}$ we may conclude 
\begin{align*}
\bigl(w(t)\bigr)^{\frac{1}{\eta}}\leq \max\left\{\frac{1}{(m-1)(m-\frac{3}{\eta}\sqrt{\varepsilon})t},\bigl(\tilde{C}\sqrt{\varepsilon}\bigr)^{\frac{1}{\eta+1}} \right\}.
\end{align*}
Letting $\varepsilon\searrow0$ such that $\chi\to1$ pointwise in $\Hyp$ proves the claim. 
\end{proof}

\subsection{Generalisation of Topping's interior area estimate}

Topping \cite{Topping2015} proves uniqueness of instantaneously complete Ricci flows on surfaces by estimating differences of area. 
In the following, we adapt his method to the Yamabe flow in dimension $m\geq3$. 
 
The Poincar\'{e} ball model realises hyperbolic space $(\Hyp,g_{\Hyp})$ as the unit ball in $\R^m$ equipped with polar coordinates $P\colon\interval{0,1}\times\Sp^{m-1}\to\Hyp$ mapping $(\rho,\vartheta)\mapsto\rho\vartheta$ and Riemannian metric
\begin{align*}
P^*g_{\Hyp}=
\frac{4}{{(1-\rho^2)}^2}(\dd\rho^2+\rho^{2}\,g_{\Sp^{m-1}}).
\end{align*}
In this section however, logarithmic polar coordinates $\tilde{P}\colon\interval{0,\infty}\times \Sp^{m-1}\to\Hyp$ given by $\tilde{P}(s,\vartheta)=P(\ex^{-s},\vartheta)$ are more suitable. 
We record  
\begin{align}\notag
\tilde{P}^*g_{\Hyp}
&=\frac{4\ex^{-2s}}{{(1-\ex^{-2s})}^2}
\bigl(\dd s^2+ g_{\Sp^{m-1}}\bigr)
=\frac{1}{(\sinh s)^2}
\bigl(\dd s^2+ g_{\Sp^{m-1}}\bigr), 
\\[1ex]\label{eqn:P*gE}
\tilde{P}^*g_{\Eucl}
&=\ex^{-2s}\bigl(\dd s^2+ g_{\Sp^{m-1}}\bigr)
=(\ex^{-s}\sinh s)^2\,\tilde{P}^*g_{\Hyp}
\end{align}
and note that the Riemannian manifold $(\Zl,\zeta)\vcentcolon=\bigl(\interval{0,\infty}\times\Sp^{m-1},\dd s^2+g_{\Sp^{m-1}}\bigr)$ has constant scalar curvature $\Rsc_\zeta\equiv(m-1)(m-2)$.

\begin{proof}[Proof of Theorem \ref{thm:uniqueness}]
Let $g(t)=u(\cdot,t) g_{\Hyp}$ and $\tilde{g}(t)=v(\cdot,t) g_{\Hyp}$ be two Yamabe flows on $\Hyp$ for $t\in[0,T]$. 
Let $U,V\colon\Zl\times[0,T]\to\interval{0,\infty}$ such that $\tilde{P}^*g(t)=U(\cdot,t)\,\zeta$ and $\tilde{P}^*\tilde{g}(t)=V(\cdot,t)\,\zeta$.
From equation \eqref{eqn:Yamabe-flow-eta} follows that $U$ and $V$ both solve 
\begin{align}\label{eqn:20190307-3}
\frac{1}{(\eta+1)(m-1)}
\frac{\partial}{\partial t}U^{\eta+1}
&=-4\eta U^{\eta}
+\frac{1}{\eta}\Delta_{\zeta}U^{\eta},
\end{align}
where $4\eta=(m-2)=\frac{1}{m-1}\Rsc_{\zeta}$ and $\Delta_{\zeta}=\tfrac{\partial^2}{\partial s^2}+\Delta_{g_{\Sp^{m-1}}}$ is the Laplace-Beltrami operator with respect to the metric $\zeta=\dd s^2+g_{\Sp^{m-1}}$ on $\Zl$.  
Note that $U$, $V$ and their derivatives with respect to $s$ have exponential decay for $s\to\infty$. 
In fact, 
\begin{align}
U((s,\theta),t)&=\frac{u(\ex^{-s}\theta,t)}{(\sinh s)^2}, 
\label{eqn:180830-1}
\\[1ex]\notag
\frac{\partial}{\partial s} U^{\eta}((s,\theta),t)
&=\frac{\eta u^{\eta-1}(\ex^{-s}\theta,t)}{(\sinh s)^{2\eta}}\frac{\partial}{\partial s}\bigl(u(\ex^{-s}\theta,t)\bigr)
-2\eta u^{\eta}(\ex^{-s}\theta,t)\frac{\cosh s}{(\sinh s)^{2\eta+1}}
\\\label{eqn:180830-2}
&=\frac{-\eta\,\ex^{-s}}{(\sinh s)^{2\eta}}\Bigl[u^{\eta-1} \frac{\partial u}{\partial \rho}\Bigr] (\ex^{-s}\theta,t)
-\frac{2\eta \, U^{\eta}((s,\theta),t)}{\tanh s}. 
\end{align}
Since $u$ is positive and regular at the origin, $[u^{\eta-1} \frac{\partial u}{\partial \rho}](\ex^{-s}\theta,t)$ stays bounded as $s\to\infty$. 
 
By assumption \ref{cond:Uniqueness-i} and equation \eqref{eqn:P*gE}, applying Proposition \ref{prop:upper} to $\tilde{g}(t)$, we have
\begin{align}\label{eqn:180829}
\bigl((\sinh s)^{2}V\bigr)^{\eta}&\leq \bigl(m(m-1)t\bigr)^{\eta}
+\bigl(b\,(\ex^{-s}\sinh s)^2\bigr)^{\eta}.  
\end{align}
Assumption \ref{cond:Uniqueness-ii} is equivalent to
\begin{align}
\label{est:lowerU}
(\sinh s)^2U&\geq m(m-1)t.
\end{align}
Combining \eqref{eqn:180829} and \eqref{est:lowerU}, we obtain 
\begin{align}
\label{est:V-U1}
V^{\eta}-U^{\eta}
&\leq (b\,\ex^{-s})^{\eta}\leq b^{\eta}. 
\end{align}
Let $S,s_0\in\interval{0,\infty}$ be such that $0<S\leq\frac{1}{3}s_0<s_0\leq\log 2$. 
Let $\varphi\colon\interval{0,\infty}\to[0,1]$ be a cutoff function which is identically equal to $1$ in the interval $\interval{s_0,\infty}$, vanishes in $\interval{0,S}$ and satisfies $\varphi''(s)\leq0$ for $s>\frac{1}{2}s_0$. 
Let $\Zl_S\vcentcolon=\interval{S,\infty}\times\Sp^{m-1}$. 
We analyse the evolution of the quantity $J\colon[0,T]\to\R$ given by 
\begin{align*}
J(t)\vcentcolon=\int_{\Zl_S}\bigl(V^{\eta+1}(\cdot,t)-U^{\eta+1}(\cdot,t)\bigr)_{+}\varphi\,\dd\mu_{\zeta}
\end{align*}
where $x_{+}\vcentcolon=\max\{x,0\}$. 
Abbreviating $w\vcentcolon=V^{\eta+1}-U^{\eta+1}$, we have 
for every fixed $t\in\intervaL{0,T}$ and every $0<\tau<t$
\begin{align*}
J(t)-J(t-\tau)
&=\int_{\Zl_S}w_+(\cdot,t)\,\varphi\,\dd\mu_{\zeta}
 -\int_{\Zl_S}w_+(\cdot,t-\tau)\,\varphi\,\dd\mu_{\zeta}
\\
&\leq\int_{\Zl_S\cap\{w(\cdot,t)>0\}}\bigl(w_+(\cdot,t)-w_+(\cdot,t-\tau)\bigr)\varphi\,\dd\mu_{\zeta}
\\
&\leq\int_{\Zl_S\cap\{w(\cdot,t)>0\}}\bigl(w(\cdot,t)-w(\cdot,t-\tau)\bigr)\varphi\,\dd\mu_{\zeta}.  
\shortintertext{We obtain}
\Psi(t)&\vcentcolon=\limsup_{\tau\searrow0}\frac{J(t)-J(t-\tau)}{\tau}
\\
&\leq \limsup_{\tau\searrow0}\int_{\Zl_S\cap\{w(\cdot,t)>0\}}\frac{1}{\tau}\bigl(w(\cdot,t)-w(\cdot,t-\tau)\bigr)\varphi\,\dd\mu_{\zeta}
\\
&=\int_{\Zl_S\cap\{w(\cdot,t)>0\}}\frac{\partial w}{\partial t}(\cdot,t)\,\varphi\,\dd\mu_{\zeta}
\end{align*}
where we may interchange limit and integral because with $f\vcentcolon=V^{\eta}-U^{\eta}$ and \eqref{eqn:20190307-3} we have
\begin{align}\label{eqn:20190307-5}
\frac{\partial w}{\partial t}=(m-1)(\eta+1)\Bigl(-4\eta\,f+\frac{1}{\eta}\Delta_\zeta f\Bigr)
\end{align}
which is bounded in $Z_S\times[0,T]$ with exponential decay for $s\to\infty$. 
We claim that 
\begin{align}\label{eqn:180830-Green}
\int_{\Zl_S\cap\{f(\cdot,t)>0\}}(\varphi\Delta_{\zeta}f-f\Delta_{\zeta}\varphi)\,\dd\mu_{\zeta}\leq 0.
\end{align}
Indeed, let $(m_k)_{k\in\mathbb{N}}$ be a sequence of regular values for $f(\cdot,t)$ such that $m_k\searrow0$ as $k\to\infty$. 
Then, $\{f(\cdot,t)>m_k\}\subset\Zl$ is a regular, open set with outer unit normal $\nu$ in the direction of $-\nabla f$. 
Moreover, since $f$ and $\nabla f$ have exponential decay for $s\to\infty$ according to \eqref{eqn:180830-1} and \eqref{eqn:180830-2}, since $\varphi(S)=0$ and since $\nabla\varphi$ is supported in $[S,s_0]$, we have by Green's formula
\begin{align*}
\int_{\Zl_S\cap\{f(\cdot,t)>m_k\}}(\varphi\Delta_{\zeta}f-f\Delta_{\zeta}\varphi)\,\dd\mu_{\zeta}
&=\int_{\Zl_S\cap\partial\{f(\cdot,t)>m_k\}}
(\varphi\nabla f\cdot\nu-f\nabla\varphi\cdot\nu)\,\dd\sigma
\\
&\leq 
-m_k\int_{\Zl_S\cap\partial\{f(\cdot,t)>m_k\}}\nabla\varphi\cdot\nu \,\dd\sigma
\\
&=-m_k\int_{\Zl_S\cap\{f(\cdot,t)>m_k\}}\Delta_{\zeta}\varphi\,\dd\mu_{\zeta}
\\
&\leq m_k\int_{\Zl_S\setminus\Zl_{s_0}}\abs{\varphi''}\,\dd\mu_{\zeta}. 
\end{align*}
Passing to the limit $k\to\infty$ proves \eqref{eqn:180830-Green} since $\Zl_S\setminus\Zl_{s_0}$ is a bounded domain. 
Hence, 
\begin{align}\notag
\frac{\Psi}{m-1} 
&\leq \int_{\Zl_S\cap\{w(\cdot,t)>0\}}\frac{1}{m-1}\frac{\partial w}{\partial t}\varphi\,\dd\mu_{\zeta}
\\[1ex]\notag
&=\int_{\Zl_S\cap\{f(\cdot,t)>0\}}
\bigl(-4\eta(\eta+1)f+\tfrac{\eta+1}{\eta}\Delta_{\zeta}f\bigr)\varphi\,\dd\mu_{\zeta}
\\[1ex]\label{eqn:180830-3}
&\leq\int_{\Zl_S\cap\{f(\cdot,t)>0\}} \tfrac{\eta+1}{\eta}f\Delta_{\zeta}\varphi\,\dd\mu_{\zeta}
=\tfrac{\eta+1}{\eta}\int_{\Zl_S}\bigl(V^{\eta}-U^{\eta}\bigr)_+\varphi''\,\dd\mu_{\zeta}.
\end{align} 
Introducing the exponent $\lambda\in\interval{0,\frac{1}{3}}$ we modify estimate \eqref{est:V-U1} as follows.
\begin{align}
\notag
(V^\eta-U^\eta)_+
&\leq b^{(1-\lambda)\eta}{(V^{\eta}-U^{\eta})}_+^{\lambda}
\\\label{est:V-U2}
&\leq b^{(1-\lambda)\eta}
\bigl(U^{-1}\,
(V^{\eta+1}-U^{\eta+1})_+\bigr)^{\lambda}.
\end{align}
We apply estimate \eqref{est:V-U2} and H\"older's inequality with exponents $\frac{1}{\lambda}$ and $\frac{1}{1-\lambda}$ to gain 
\begin{align*}
\frac{\Psi}{m-1}
&\leq C_1\int_{\Sp^{m-1}}\int^{\frac{1}{2}s_0}_{S} 
\bigl(V^{\eta+1}-U^{\eta+1}\bigr)_+^{\lambda}\varphi^{\lambda}
\,\varphi^{-\lambda} U^{-\lambda}
\abs{\varphi''}\,\dd s\,\dd\mu_{g_{\Sp^{m-1}}}
\\
&\leq C_1\,J^{\lambda}
\Bigl(\int_{\Sp^{m-1}}\int^{\frac{1}{2}s_0}_{S}
\varphi^{-\frac{\lambda}{1-\lambda}}
U^{-\frac{\lambda}{1-\lambda}}
\abs{\varphi''}^{\frac{1}{1-\lambda}}
\,\dd s\,\dd\mu_{g_{\Sp^{m-1}}}\Bigr)^{1-\lambda}  
\end{align*}
with constant $C_1=\frac{\eta+1}{\eta}b^{(1-\lambda)\eta}$. 
The restriction of the integration domain to $[S,\frac{1}{2}s_0]$ is justified since $\varphi''\leq0$ in $\interval{\frac{1}{2}s_0,\infty}$. 
Substituting $\lambda=\tfrac{\gamma}{1+\gamma}$ we also have $\tfrac{\lambda}{1-\lambda}=\gamma$ and $\tfrac{1}{1-\lambda}=1+\gamma$ with $\gamma\in\interval{0,\frac{1}{2}}$, and we obtain 
\begin{align}\label{est:dJdt3}
\frac{\Psi}{m-1} 
&\leq C_1\, J ^{\frac{\gamma}{1+\gamma}}
\Bigl(\int_{\Sp^{m-1}}\int^{\frac{1}{2}s_0}_{S}U^{-\gamma}
\abs{\varphi''}^{1+\gamma}\varphi^{-\gamma}\dd s\,\dd\mu_{g_{\Sp^{m-1}}}\Bigr)^{\frac{1}{1+\gamma}}. 
\end{align}
Since $s\mapsto\sinh(s)$ is convex for positive arguments, we have 
\begin{align*}
\forall s\in\interval{0,s_0}\subset\interval{0,\log 2}:\quad
\sinh(s)\leq\sinh(\log2)\frac{s}{\log 2}=\frac{3s}{4\log2}. 
\end{align*} 
The lower barrier \eqref{est:lowerU} then implies 
\begin{align}
\label{est:dJdt4}
\frac{1}{U}\leq\frac{(\sinh s)^2}{m(m-1)t}\leq \frac{C_2}{t} s^2 
\end{align}
with constant $C_2=\frac{1}{m(m-1)}\bigl(\frac{3}{4(\log 2)}\bigr)^2$. 
Substituting \eqref{est:dJdt4} into \eqref{est:dJdt3}, we obtain 
\begin{align*}
\Psi(t)
&\leq C\bigl(J(t)\bigr)^{\frac{\gamma}{1+\gamma}}
\Bigl(\int^{\frac{1}{2}s_0}_{S}
\Bigl(\frac{s^2}{t}\Bigr)^{\gamma}
\abs{\varphi''}^{1+\gamma}\varphi^{-\gamma}\,\dd s\Bigr)^{\frac{1}{1+\gamma}}
\end{align*} 
with constant $C=(m-1)C_1C_2^{\frac{\gamma}{1+\gamma}}\abs{\Sp^{m-1}}^{\frac{1}{1+\gamma}}$, i.\,e. 
\begin{align*}
\limsup_{\tau\searrow0}\frac{J(t)-J(t-\tau)}{\tau}
&\leq  C\Bigl(\frac{J(t)}{t}\Bigr)^{\frac{\gamma}{1+\gamma}}Q^{\frac{1}{1+\gamma}}, 
&
Q&\vcentcolon=\int^{\frac{1}{2}s_0}_{S}s^{2\gamma}\abs{\varphi''}^{1+\gamma}\varphi^{-\gamma}\,\dd s, 
\end{align*}
which resembles Topping's \cite[(3-10)]{Topping2015} ``main differential inequality'' 
\begin{align*}
\tfrac{\dd}{\dd t}J^{\frac{1}{1+\gamma}}\leq C\, t^{-\frac{\gamma}{1+\gamma}}Q^{\frac{1}{1+\gamma}}
\end{align*}
in the $2$-dimensional case. 
By Lemma \ref{lem:backdiff3} stated in the appendix, we conclude  
\begin{align*}
J^{\frac{1}{1+\gamma}}(t)-J^{\frac{1}{1+\gamma}}(0)
\leq C\,t^{\frac{1}{1+\gamma}}Q^{\frac{1}{1+\gamma}}. 
\end{align*}
The assumption $\tilde{g}(0)\leq g(0)$ implies $J(0)=0$. 
Therefore, we have 
\begin{align*}
\int_{\Sp^{m-1}}\int_{s_0}^{\infty}
\bigl(V^{\eta+1}-U^{\eta+1}\bigr)_+\,\dd s\,\dd\mu_{g_{\Sp^{m-1}}}
\leq J(t)\leq C^{1+\gamma} \,t\,Q  
\end{align*}
for every $t\in[0,T]$. 
With a clever choice of $\varphi$, Topping \cite[Prop. 3.2]{Topping2015} proves  
\begin{align}\label{est:Q}
Q\leq \frac{C_\gamma}{s_0}{(\log s_0-\log S)}^{-\gamma}.
\end{align} 
In the limit $S\searrow 0$ we have $Q\to0$ by \eqref{est:Q} which yields $V\leq U$ in $\interval{s_0,\infty}\times\Sp^{m-1}$. 
Since $s_0>0$ is arbitrary, we obtain $V\leq U$ globally and hence $\tilde{g}(t)\leq g(t)$ as claimed.  

In the case that $g(t)$ and $\tilde{g}(t)$ both satisfy \ref{cond:Uniqueness-ii} and $g(0)=\tilde{g}(0)\leq b\,g_{\Eucl}$, 
the reverse inequality $\tilde{g}(t)\geq g(t)$ follows similarly by switching the roles of $U$ and $V$. 
\end{proof}

\clearpage
\appendix
\section{Auxiliary results} 

The previous sections depend on some standard results and computations which we collect in this appendix for convenience of the reader. 

\begin{prop}[Inverse function theorem for Banach spaces]\label{prop:IFT}
Let $X,Y$ be Banach spaces and $\tilde{X}\subset X$ be open. 
Let $S\colon\tilde{X}\to Y$ be Fr\'{e}chet differentiable at $x_0$ with $S(x_0)=0$ and invertible derivative $D S(x_0)$. 
Then there exists a neighbourhood $U\subset\tilde{X}$ of $x_0$ such that 
\begin{enumerate}[label={\normalfont(\roman*)}]
	\item $V=SU$ is open,
	\item $S|_U\colon U\to V$ is a homeomorphism,
	\item $(S|_U)^{-1}$ is Fr\'{e}chet differentiable.
\end{enumerate}
\end{prop}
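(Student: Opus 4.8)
The plan is the classical one: reduce the equation $S(x)=y$ to a fixed-point problem and apply Banach's contraction mapping principle, using throughout that $DS$ is continuous near $x_0$ (this is the regularity available in our applications; see the proof of Lemma~\ref{lem:shorttimeexistence}). First I would normalise. Since $DS(x_0)\colon X\to Y$ is a bounded linear bijection, its inverse is bounded by the open mapping theorem, so replacing $S$ by $x\mapsto DS(x_0)^{-1}S(x)$ and translating, we may assume $Y=X$, $x_0=0$, $S(0)=0$ and $DS(0)=\mathrm{id}_X$. Writing $S=\mathrm{id}-R$, the remainder $R$ satisfies $R(0)=0$ and $DR(0)=0$, and by continuity of $DR$ there is $r>0$ with $\|DR(x)\|\le\tfrac12$ on the convex set $\overline{B_r(0)}$; by the mean value inequality $R$ is then $\tfrac12$-Lipschitz there, and the Neumann series shows $DS(x)=\mathrm{id}-DR(x)$ is invertible with $\|DS(x)^{-1}\|\le2$ on $\overline{B_r(0)}$.

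Next, for $y\in V:=B_{r/2}(0)$ I would consider $G_y(x):=y+R(x)$. It maps $\overline{B_r(0)}$ into itself and is a $\tfrac12$-contraction, hence has a unique fixed point $x=\Psi(y)\in\overline{B_r(0)}$; equivalently, $\Psi(y)$ is the unique solution of $S(x)=y$ in $\overline{B_r(0)}$. The contraction estimates give $\|\Psi(y)-\Psi(y')\|\le2\|y-y'\|$ and $\|\Psi(y)\|\le2\|y\|<r$, so $\Psi$ is Lipschitz and takes values in $B_r(0)$. Setting $U:=S^{-1}(V)\cap B_r(0)$, which is open by continuity of $S$, one checks $S(\Psi(y))=y$ for $y\in V$ and $\Psi(S(x))=x$ for $x\in U$ — in the latter, both $x$ and $\Psi(S(x))$ are fixed points of $G_{S(x)}$ in $\overline{B_r(0)}$. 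Hence $S|_U\colon U\to V$ is a bijection with continuous inverse $\Psi$, i.e. a homeomorphism onto the open set $V$. Undoing the normalisation yields the neighbourhoods $U\ni x_0$ and $V\ni S(x_0)=0$ asserted in \ref{prop:IFT}.

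The remaining point is Fréchet differentiability of $\Psi=(S|_U)^{-1}$. Given $y\in V$, put $x:=\Psi(y)$, and for $y'\in V$ put $x':=\Psi(y')$. By differentiability of $S$ at $x$,
\[
y'-y=S(x')-S(x)=DS(x)(x'-x)+o(\|x'-x\|);
\]
applying the bounded operator $DS(x)^{-1}$ and using $\|x'-x\|\le2\|y'-y\|$ to absorb the error term, we get $x'-x=DS(x)^{-1}(y'-y)+o(\|y'-y\|)$, that is, $\Psi$ is Fréchet differentiable at $y$ with $D\Psi(y)=DS(\Psi(y))^{-1}$. Since $x\mapsto DS(x)$ and $A\mapsto A^{-1}$ are continuous, $D\Psi$ is moreover continuous, so $\Psi$ is in fact $C^1$.

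The main obstacle is precisely the step that pins down which regularity of $S$ is needed: invertibility of $DS$ at the single point $x_0$ does not even give local injectivity, so one must propagate invertibility to a whole neighbourhood (via the Neumann series) and simultaneously control the Lipschitz constant of $R$ there, both of which rely on $DS$ being continuous near $x_0$. Once that is granted, the contraction argument and the differentiability computation are routine.
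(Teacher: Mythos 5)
The paper does not actually prove Proposition~\ref{prop:IFT}; it is stated in the appendix among the ``standard results\ldots{} which we collect\ldots{} for convenience of the reader,'' with no argument supplied. So there is no in-paper proof to compare against. Your proof is the classical contraction-mapping argument and is correct, and it is the standard way one would fill in this citation.

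One point worth flagging, and you flag it yourself in the last paragraph: as literally stated, the hypotheses of Proposition~\ref{prop:IFT} (Fr\'echet differentiability of $S$ \emph{only at} $x_0$ with $DS(x_0)$ invertible) are too weak to yield the conclusion. Already in one dimension, $f(x)=x+2x^2\sin(1/x)$, $f(0)=0$, is differentiable at $0$ with $f'(0)=1$ but is not injective on any neighbourhood of $0$, so no $U$ with the stated properties exists. Your proof (correctly) uses continuity of $DS$ near $x_0$ to make $R=\mathrm{id}-DS(x_0)^{-1}S$ a $\tfrac12$-contraction on a ball and to invert $DS(x)$ there by the Neumann series; that extra regularity is essential, and it is exactly what the paper establishes in the only place the proposition is invoked (the proof of Lemma~\ref{lem:shorttimeexistence} explicitly verifies that $u\mapsto L(u)$ is continuous near $\tilde u$, so $S\in C^1$ near $0$). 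So your proof is sound for the strengthened hypothesis that actually holds in the application, and you are right to note that the statement as printed should be read with $C^1$ regularity near $x_0$ understood.
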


In the following proposition, we denote partial derivatives by subscripts and understand a sum $\sum^{m}_{i=1}$ whenever an index $i$ appears twice in an expression. 

\begin{prop}[Linear parabolic maximum principle {\cite[\textsection\,3.3]{Protter1984}}]\label{prop:parabolicMP}
Let $\Omega\subset\R^m$ be an open, bounded domain. 
Suppose, $u$ satisfies 
\begin{align*} 
\left\{
\begin{aligned}
u_t-a_{ij}u_{x_i x_j}-b_k u_{x_k}-c u&\leq0
&&\text{ in ${\Omega}\times[0,T]$, } \\
u&\leq 0
&&\text{on $\Omega\times\{0\}$ and on $\partial\Omega\times[0,T]$, } 
\end{aligned}\right.
\end{align*}
where the function $c<\lambda\in\mathbb{R}$ is bounded from above and ellipticity $a_{ij}\,\xi_i\,\xi_j\geq0$ for all $\xi\in\R^m$ holds uniformly.
Then, $u\leq0$ in $\Omega\times[0,T]$. 
\end{prop}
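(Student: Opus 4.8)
The plan is to prove this by the classical interior-maximum argument, after first normalising away the zeroth order term. Since $c$ is bounded above by $\lambda$, I would substitute $u=\ex^{\lambda t}v$; a direct computation converts the differential inequality into
\[
v_t-a_{ij}v_{x_ix_j}-b_kv_{x_k}-(c-\lambda)v\leq0\qquad\text{in }\Omega\times[0,T],
\]
with new zeroth order coefficient $c-\lambda<0$, while the sign condition on the parabolic boundary $(\Omega\times\{0\})\cup(\partial\Omega\times[0,T])$ and the desired conclusion are unchanged, $u\leq0$ being equivalent to $v\leq0$ in each region. Hence it suffices to treat the case $c<0$.

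Next I would argue by contradiction: suppose $u$ is positive somewhere in $\Omega\times[0,T]$. As $\overline{\Omega}\times[0,T]$ is compact and $u$ is continuous there, twice continuously differentiable in the spatial variables and once in time (as is implicit in the statement and holds in every application made in this paper), the maximum $M\vcentcolon=\max_{\overline{\Omega}\times[0,T]}u$ is attained at some $(x_0,t_0)$, and $M>0$. The hypothesis $u\leq0$ on the parabolic boundary forces $(x_0,t_0)$ to lie off it, i.e.\ $x_0\in\Omega$ and $0<t_0\leq T$. Since $x_0$ is an interior point of $\Omega$, we have $\nabla u(x_0,t_0)=0$ and the Hessian $\bigl(u_{x_ix_j}(x_0,t_0)\bigr)$ is negative semidefinite; and $u(x_0,t)\leq M$ for $t<t_0$ gives $u_t(x_0,t_0)\geq0$ (interpreting $u_t$ as the one-sided derivative if $t_0=T$). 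Symmetrising $a_{ij}$ if necessary, ellipticity $a_{ij}\xi_i\xi_j\geq0$ says $(a_{ij})$ is positive semidefinite, and the trace of the product of two symmetric matrices, one positive and one negative semidefinite, is nonpositive, so $a_{ij}u_{x_ix_j}(x_0,t_0)\leq0$. Evaluating the left-hand side of the differential inequality at $(x_0,t_0)$ therefore gives
\[
u_t-a_{ij}u_{x_ix_j}-b_ku_{x_k}-cu\ \geq\ 0-0-0-cM\ =\ -cM\ >\ 0,
\]
since $c<0$ and $M>0$, contradicting that this expression is $\leq0$. Thus $u\leq0$ in $\Omega\times[0,T]$, and undoing the substitution $u=\ex^{\lambda t}v$ finishes the general case.

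There is no genuine obstacle here — this is a textbook statement, quoted from Protter and Weinberger — but two points deserve attention. First, the exponential normalisation is needed precisely because $c$ may be positive, so the bare differential inequality need not propagate a positive maximum backward without the weight. Second, at the maximum one must extract $a_{ij}u_{x_ix_j}\leq0$ from ellipticity together with negative semidefiniteness of the Hessian, and one should note that when the maximum is attained at $t_0=T$ only the one-sided inequality $u_t(x_0,T)\geq0$ is available — which is, however, all the argument uses.
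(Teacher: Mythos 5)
Your proof is correct and follows essentially the same route as the paper's: the exponential substitution to make the zeroth-order coefficient negative, followed by evaluation of the differential inequality at an interior maximum using $u_t\geq0$, $\nabla u=0$, and $a_{ij}u_{x_ix_j}\leq0$. The only cosmetic difference is that you phrase the final step as a contradiction, whereas the paper directly concludes $v(x_0,t_0)\leq0$ at the maximum; the small refinements you add (symmetrising $a_{ij}$, noting the one-sided time derivative at $t_0=T$) are sound and implicit in the paper's argument.
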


\begin{proof}
The function $v(x,t)=u(x,t)\ex^{-\lambda t}$ satisfies the equation 
\begin{align}\label{eqn:cpluslambda}
v_t-a_{ij}v_{x_i x_j}-b_k v_{x_k}-(c-\lambda)v\leq0. 
\end{align}
Assume that $v(x_0,t_0)=\max_{\overline{\Omega}\times[0,T]} v$. 
If $t_0=0$ or if $x_0\in\partial\Omega$, then $v\leq0$ follows. 
If $(x_0,t_0)\in\Omega\times\intervaL{0,T}$, then $v_t(x_0,t_0)\geq0$, $v_{x_k}(x_0,t_0)=0$ and $-a_{ij}v_{x_i x_j}(x_0,t_0)\geq0$. 
Thus, \eqref{eqn:cpluslambda} implies $-(c-\lambda)v(x_0,t_0)\leq 0$.
From $(c-\lambda)<0$ follows $v(x_0,t_0)\leq 0$. 
Therefore, $v\leq 0$ in $\overline{\Omega}\times[0,T]$. 
Since $u$ and $v$ share the same sign, the claim follows. 
\end{proof}

\begin{lem}[Laplacian of hyperbolic distance]\label{lem:Laplacehypdist} 
Let $r\colon\Hyp\to\Interval{0,\infty}$ be the Riemannian 
distance function with respect to some origin in hyperbolic space $(\Hyp,g_{\Hyp})$ of dimension $m\geq2$. Then, 
\begin{align*}
\Delta_{g_{\Hyp}}r&=\frac{m-1}{\tanh r}.
\end{align*}
In particular, $(\Delta_{g_\Hyp}r)(x)\leq 2(m-1)$ for every $x\in\Hyp$ with $r(x)\geq1$. 
\end{lem}

\begin{proof}
The Poincar\'{e} ball model realises hyperbolic space $(\Hyp,g_{\Hyp})$ as the unit ball equipped with polar coordinates $P\colon\interval{0,1}\times\Sp^{m-1}\to\Hyp$ mapping $(\rho,\vartheta)\mapsto\rho\vartheta$ and conformal Riemannian metric $P^*g_{\Hyp}=h^2g_{\Eucl}$, where 
\begin{align}\label{eqn:Poincare-metric}
h(\rho)&=\frac{2}{1-\rho^2}, &
g_{\Eucl}&=\dd\rho^2+\rho^{2}\,g_{\Sp^{m-1}}. 
\end{align}
Here, $g_{\Sp^{m-1}}$ is the standard metric on the unit sphere $\Sp^{m-1}\subset\R^m$ and $g_{\Eucl}$ is the Euclidean metric on the unit ball in $\R^m$. 
We denote the radial coordinate on the unit ball by $\rho$ and reserve $r$ for the hyperbolic distance which is given by 
\begin{align}\label{eqn:hyperbolic_distance}
r(\rho)
=\int^{\rho}_{0}h(x)\,\dd x
=2\operatorname{artanh}(\rho)
=\log\biggl(\frac{1+\rho}{1-\rho}\biggr).
\end{align}
By \eqref{eqn:hyperbolic_distance}, we have 
\begin{align*}
\frac{\partial r}{\partial\rho}&=h, &
\frac{\partial^2 r}{\partial\rho^2}&=\frac{\partial h}{\partial\rho}
=\rho\,h^2.
\end{align*}
On the one hand, equation \eqref{eqn:Laplace-conformal} implies 
\begin{align}\notag
\Delta_{g_{\Hyp}}r
&=h^{-2}\Delta_{g_{\Eucl}}r+(m-2)h^{-3}\sk{\nabla h,\nabla r}_{g_{\Eucl}}
\\[.5ex]\notag
&=\frac{1}{h^2}\frac{\partial^2 r}{\partial\rho^2}
+\frac{(m-1)}{h^2\rho}\frac{\partial r}{\partial\rho}
+(m-2)\frac{\rho}{h}\frac{\partial r}{\partial\rho}
\\\notag
&=\rho +\frac{(m-1)}{h\rho}+(m-2)\rho
\\
\label{eqn:20190204-4}
&=(m-1)\biggl(\rho+\frac{1-\rho^2}{2\rho}\biggr)
=(m-1)\biggl(\frac{1+\rho^2}{2\rho}\biggr).
\end{align}
On the other hand, equation \eqref{eqn:hyperbolic_distance} implies 
\[ 
\tanh(r)
=\frac{\ex^{r}-\ex^{-r}}{\ex^{r}+\ex^{-r}}
=\frac{\frac{1+\rho}{1-\rho}-\frac{1-\rho}{1+\rho}}{\frac{1+\rho}{1-\rho}+\frac{1-\rho}{1+\rho}}
=\frac{(1+\rho)^2-(1-\rho)^2}{(1+\rho)^2+(1-\rho)^2}
=\frac{4\rho}{2+2\rho^2}.
\] 
Combined with \eqref{eqn:20190204-4}, the claim follows. 
\end{proof}

\begin{lem}[cutoff function]
\label{lem:cutoffest}
Let $\varepsilon>0$ and $a,b>0$ be real parameters. 
Then there exists a non-increasing cutoff function $\varphi\in\Ct^{2}(\mathbb{R})$ given by 
\begin{align*}
\varphi(x)&=\begin{cases}1,&\text{ if }x\leq1, \\
0,&\text{ if }x\geq2   
\end{cases}
\end{align*}
which satisfies the inequality 
\begin{align*}
\abs{\varphi''}+a\abs{\varphi'}+b\frac{\varphi'^2}{\varphi}&\leq C\varphi^{1-\varepsilon}
\end{align*}
in $\{x\in\mathbb{R}\mid \varphi(x)\neq0\}$
with a constant $C$ depending only on $a,b$ and $\varepsilon$. 
\end{lem}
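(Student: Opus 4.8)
The plan is to write down one explicit, universal cutoff and to verify the claimed inequality separately on three pieces of the set $\{\varphi\neq0\}=(-\infty,2)$, the only delicate piece being a left neighbourhood of $x=2$. The point of allowing $\varepsilon>0$ is that $\varphi^{1-\varepsilon}$ is only a whisker smaller than $\varphi$, and the deficit $\varphi^{\varepsilon}$ turns out to decay faster than \emph{any} power of $(2-x)$, which is exactly what is needed to absorb the inverse powers of $(2-x)$ produced by differentiating a flat cutoff twice.

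Concretely, I would use the standard flat bump. Let $f\colon\R\to\R$ be given by $f(t)=\ex^{-1/t}$ for $t>0$ and $f(t)=0$ for $t\leq0$, and set
\[
\varphi(x)\vcentcolon=\frac{f(2-x)}{f(2-x)+f(x-1)}.
\]
Then $\varphi\in\Ct^{\infty}(\R)$, it equals $1$ on $(-\infty,1]$ (since there $f(x-1)=0$ while $f(2-x)>0$) and $0$ on $[2,\infty)$, and $0<\varphi<1$ on $(1,2)$. Writing $u=f(2-x)$ and $v=f(x-1)$ we have $\varphi'=(u'v-uv')/(u+v)^2$ with $u'\leq0\leq v'$ because $f'\geq0$, so $\varphi$ is non-increasing, as required.

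For the estimate, on $(-\infty,1]$ the left-hand side vanishes. On the compact interval $[1,\tfrac{3}{2}]$ the function $\varphi$ is smooth and bounded below by the universal constant $\varphi(\tfrac{3}{2})>0$, hence $\abs{\varphi''}$, $\abs{\varphi'}$ and $\varphi'^2/\varphi$ are bounded there while $\varphi^{1-\varepsilon}$ is bounded below, so the inequality holds with a constant depending only on $a$ and $b$. The remaining interval $[\tfrac{3}{2},2)$ is the crux: there $x-1\in[\tfrac{1}{2},1]$, so $v=f(x-1)$ and $v'=f'(x-1)$ stay between positive universal constants, while, using $f'(t)=t^{-2}f(t)$ and $f''(t)=(t^{-4}-2t^{-3})f(t)$ and differentiating the quotient, one gets for $t\vcentcolon=2-x\in(0,\tfrac{1}{2}]$ the bounds
\[
\varphi\geq c_0\,\ex^{-1/t},\qquad \abs{\varphi'}\leq C_0\,t^{-2}\ex^{-1/t},\qquad \abs{\varphi''}\leq C_0\,t^{-4}\ex^{-1/t}
\]
with universal $c_0,C_0>0$, the dominant contribution being $\varphi'^2/\varphi\leq C_0^2 c_0^{-1}t^{-4}\ex^{-1/t}$. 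Hence $\abs{\varphi''}+a\abs{\varphi'}+b\,\varphi'^2/\varphi\leq C_1(1+a+b)\,t^{-4}\ex^{-1/t}$, and since $\varphi^{1-\varepsilon}\geq c_0^{1-\varepsilon}\ex^{-(1-\varepsilon)/t}$ this is at most
\[
C_1(1+a+b)\,c_0^{\varepsilon-1}\Bigl(\sup_{0<t\leq1/2}t^{-4}\ex^{-\varepsilon/t}\Bigr)\,\varphi^{1-\varepsilon},
\]
the supremum being finite because $t^{-4}\ex^{-\varepsilon/t}\to0$ as $t\searrow0$. Taking $C$ to be the maximum of the constants from the three pieces yields a constant depending only on $a$, $b$ and $\varepsilon$.

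The only point requiring care — the closest thing to an obstacle — is the analysis on $[\tfrac{3}{2},2)$: one must check that $\varphi'^2/\varphi$, which blows up like $t^{-4}\ex^{-1/t}$, is still dominated by $C\varphi^{1-\varepsilon}$, and this works precisely because $\varphi^{\varepsilon}=\ex^{-\varepsilon/t}$ beats every polynomial in $t^{-1}$. Everything else is routine bookkeeping of universal constants, and the argument is insensitive to the precise shape of $f$ on $(1,2)$, so the explicit quotient form above is chosen only for concreteness.
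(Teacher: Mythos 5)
Your proof is correct, but it takes a genuinely different route from the paper's. You fix a specific $\Ct^\infty$ flat bump built from $f(t)=\ex^{-1/t}$ and verify the inequality by direct asymptotic analysis near $x=2$, the key observation being that $\varphi^{\varepsilon}\sim\ex^{-\varepsilon/t}$ dominates every inverse power of $t=2-x$ produced by differentiation. The paper instead starts from an arbitrary non-increasing $\Ct^2$ cutoff $\psi$ with bounded derivatives and sets $\varphi=\psi^{p}$ with $p=2/\varepsilon$; then $\varphi'$, $\varphi''$ and $\varphi'^2/\varphi$ all carry an explicit factor $\psi^{p-2}=\varphi^{1-2/p}=\varphi^{1-\varepsilon}$, and the estimate drops out in one line of algebra with the explicit constant $4(2+a+b)C_0^2/\varepsilon^2$. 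The paper's device of raising a generic cutoff to a large power is shorter, yields an explicit polynomial dependence of $C$ on $1/\varepsilon$, and avoids any asymptotic bookkeeping; your version is more concrete and shows explicitly why the standard flat bump already works, at the cost of the three-region case analysis and the supremum $\sup_{0<t\le1/2}t^{-4}\ex^{-\varepsilon/t}$ (which is also polynomial in $1/\varepsilon$, attained at $t=\varepsilon/4$, so the two constants behave comparably).
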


\begin{proof}
There exists a non-increasing function $\psi\in\Ct^{2}(\mathbb{R})$ satisfying $\psi(x)=1$ for $x\leq1$ and $\psi(x)=0$ for $x\geq2$ as well as $\abs{\psi},\abs{\psi'},\abs{\psi''}\leq C_0$. 
Let $p=\frac{2}{\varepsilon}$ and $\varphi=\psi^p$. 
We may assume $0<\delta\leq2$ and $p\geq1$. 
Then, 
\begin{align*}
\varphi'&=p\psi'\psi^{p-1}, 
&
\varphi''&=p\bigl((p-1)\psi'^2+\psi\psi''\bigr)\psi^{p-2}, 
&
\frac{\varphi'^2}{\varphi}&=p^2\psi'^2\psi^{p-2}, 
\end{align*}
which implies 
\begin{align*}
\abs{\varphi''}+a\abs{\varphi'}+b\frac{\varphi'^2}{\varphi}
&\leq p\bigl((p-1)\psi'^2+\abs{\psi\psi''}+a\abs{\psi'\psi}+b p\,\psi'^2\bigr)\psi^{p-2}
\\
&\leq p\bigl(1+a+(1+b)p\bigr)C_0^2\varphi^{\frac{p-2}{p}}
\leq\frac{4(2+a+b)C_0^2}{\varepsilon^2}\,\varphi^{1-\varepsilon}
.\qedhere
\end{align*}
\end{proof}

\begin{lem}\label{lem:backdiff}
Let $Q\in\R$ and let $f\colon [0,T]\to\R$ be continuous satisfying 
\begin{align*}
\liminf_{\tau\searrow0}\frac{f(\xi)-f(\xi-\tau)}{\tau}\geq Q
\end{align*}
for every $0<\xi\leq T$. Then $f(t)-f(0)\geq Qt$ for every $t\in[0,T]$.
\end{lem}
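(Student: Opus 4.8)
The plan is to reduce the statement to a strict inequality by perturbing $f$ linearly, and then to derive a contradiction at the \emph{first} point where the perturbed function attains its minimum. The case $t=0$ being trivial, I would fix $t\in(0,T]$ and a parameter $\varepsilon>0$ and set
\[
h(s):=f(s)-f(0)-(Q-\varepsilon)s,\qquad s\in[0,t].
\]
Then $h$ is continuous, $h(0)=0$, and subtracting the constant $Q-\varepsilon$ from each backward difference quotient of $f$ gives
\[
\liminf_{\tau\searrow0}\frac{h(\xi)-h(\xi-\tau)}{\tau}\ \geq\ Q-(Q-\varepsilon)\ =\ \varepsilon\ >\ 0
\qquad\text{for every }\xi\in(0,t].
\]
It therefore suffices to prove $h(t)\geq0$: letting $\varepsilon\searrow0$ afterwards yields $f(t)-f(0)\geq Qt$, and combining with the trivial case $t=0$ gives the claim for all $t\in[0,T]$.

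To show $h(t)\geq0$ I would argue by contradiction. Suppose $h(t)<0$. Since $h$ is continuous on the compact interval $[0,t]$, it attains a minimum $m^{\ast}=\min_{[0,t]}h$, and the inequality $m^{\ast}\leq h(t)<0=h(0)$ forces this minimum not to be attained at $0$. Let $\sigma$ denote the smallest point of $[0,t]$ with $h(\sigma)=m^{\ast}$; then $\sigma\in(0,t]$ and $h(s)>m^{\ast}=h(\sigma)$ for every $s\in[0,\sigma)$. Consequently $h(\sigma)-h(\sigma-\tau)<0$ for all $\tau\in(0,\sigma]$, so $\liminf_{\tau\searrow0}\tfrac1\tau\bigl(h(\sigma)-h(\sigma-\tau)\bigr)\leq0$, which contradicts the strict positivity established above since $\sigma\in(0,t]$. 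Hence $h(t)\geq0$, as required.

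The one point that needs care is the choice of the test point. A lower \emph{left} Dini derivative controls $f$ only immediately to the left of a given argument, so the more obvious device of taking $\sigma=\sup\{\,s\in[0,t]:f(s)-f(0)\geq Qs\,\}$ does not work: near such a $\sigma$ the relevant difference quotients would have the wrong sign, and one would need information to the \emph{right} of $\sigma$. Selecting instead the first minimum point of the perturbed function $h$ is precisely what makes every backward difference quotient at $\sigma$ strictly negative, which is the crux of the argument. Everything else — the linear perturbation, the use of the extreme value theorem, and the passage to the limit $\varepsilon\searrow0$ — is routine, and continuity of $f$ enters only to guarantee that the minimum is attained.
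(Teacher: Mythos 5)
Your argument is correct, but it follows a genuinely different route from the paper's. The paper (following Hamilton) argues by backward continuation: after setting $w(s)=f(s)-f(0)-Qs$, it fixes $t$ and $\varepsilon>0$, takes the largest $\delta\in(0,t]$ for which $w(t)-w(t-\tau)\geq-\varepsilon\tau$ holds on $[0,\delta)$, and shows by reapplying the hypothesis at $\xi=t-\delta$ that if $\delta<t$ the interval could be enlarged, contradicting maximality; thus $\delta=t$, and one lets $\varepsilon\searrow0$. You instead absorb the $\varepsilon$ into the linear term, making the lower backward Dini derivative of $h$ strictly positive on $(0,t]$, and invoke the extreme value theorem: if $h(t)<0$, the first minimiser $\sigma$ of $h$ on $[0,t]$ lies in $(0,t]$ and every backward difference quotient at $\sigma$ is negative, contradicting that strict positivity. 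Both proofs rest on the same linear perturbation and on continuity, but yours replaces the continuation/maximal-interval bookkeeping with a single appeal to the extreme value theorem and a sign argument at the first minimiser, which is somewhat tidier. Your closing remark explaining why the naive choice $\sigma=\sup\{s\in[0,t]:f(s)-f(0)\geq Qs\}$ would fail is accurate and identifies exactly where the one-sidedness of the Dini derivative matters.
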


\begin{proof}
We follow an argument by Richard Hamilton \cite[Lemma 3.1]{Hamilton1986}. 
The function 
\begin{align*}
w(t)&\vcentcolon=f(t)-f(0)-Qt 
\end{align*}
is continuous and satisfies 
\begin{align}\label{eqn:proofA5-1}
\liminf_{\tau\searrow0}\frac{w(\xi)-w(\xi-\tau)}{\tau}
&=\liminf_{\tau\searrow0}\frac{f(\xi)-f(\xi-\tau)}{\tau}
-Q\geq 0. 
\end{align}
Let $0<t\leq T$ and $\varepsilon>0$ be arbitrary but fixed. 
Then there exists $\delta>0$ such that  
\begin{align}\label{eqn:proofA5-2}
\forall \tau\in\Interval{0,\delta}:\quad
w(t)-w(t-\tau)\geq-\varepsilon\tau
\end{align}
because otherwise, we would obtain a contradiction to \eqref{eqn:proofA5-1}. 
We may assume that $\delta\in\intervaL{0,t}$ is the largest value such that \eqref{eqn:proofA5-2} holds. 
By continuity of $w$, \eqref{eqn:proofA5-2} implies 
\begin{align}\label{eqn:proofA5-4}
w(t)-w(t-\delta)\geq-\varepsilon\delta. 
\end{align}
If $t-\delta>0$, we may repeat the argument to find some $\delta'>0$ such that  
\begin{align}\label{eqn:proofA5-3}
\forall \tau\in\Interval{0,\delta'}:\quad
w(t-\delta)-w(t-\delta-\tau)\geq-\varepsilon\tau.
\end{align}
In particular, \eqref{eqn:proofA5-4} and \eqref{eqn:proofA5-3} can be combined to 
\begin{align*}
w(t)-w(t-\delta-\tau)
&\geq
-\varepsilon(\delta+\tau)
\end{align*}
for all $\tau\in\Interval{0,\delta'}$ in contradiction to the maximality of $\delta$. 
Hence, $\delta=t$ and we obtain
\begin{align*}
w(t)-w(0)\geq-\varepsilon t. 
\end{align*} 
Since $\varepsilon>0$ is arbitrary and $w(0)=0$ we have $w(t)\geq0$ which proves the claim.  
\end{proof}

\begin{lem}\label{lem:backdiff2} 
Let $\eta>0$ and $Q\geq0$. 
Let $v\colon [0,T]\to\Interval{0,\infty}$ be continuous satisfying 
\begin{align*}
\limsup_{\tau\searrow0}\frac{v(\xi)-v(\xi-\tau)}{\tau}\leq\eta Q v(\xi)^{1-\frac{1}{\eta}}
\end{align*}
for every $0<\xi\leq T$ where $v(\xi)\neq0$. 
Then $v(t)^{\frac{1}{\eta}}-v(0)^{\frac{1}{\eta}}\leq Q t$ for every $t\in[0,T]$.
\end{lem}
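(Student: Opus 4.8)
The plan is to reduce everything to Lemma~\ref{lem:backdiff}, applied to the continuous function $f(t)\vcentcolon=v(t)^{1/\eta}$ (with $f$ and $Q$ there replaced by $-f$ and $-Q$). Continuity of $f$ follows from continuity of $v$ together with continuity of $s\mapsto s^{1/\eta}$ on $\Interval{0,\infty}$, and the asserted inequality $v(t)^{1/\eta}-v(0)^{1/\eta}\leq Qt$ is precisely $f(t)-f(0)\leq Qt$. Since
\begin{align*}
\liminf_{\tau\searrow0}\frac{(-f)(\xi)-(-f)(\xi-\tau)}{\tau}
=-\limsup_{\tau\searrow0}\frac{f(\xi)-f(\xi-\tau)}{\tau},
\end{align*}
it suffices to verify the one-sided upper derivative bound $\limsup_{\tau\searrow0}\frac{f(\xi)-f(\xi-\tau)}{\tau}\leq Q$ for every $0<\xi\leq T$; Lemma~\ref{lem:backdiff} then yields $f(t)-f(0)\leq Qt$.

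I would establish this bound by distinguishing two cases according to the value $v(\xi)$. If $v(\xi)=0$, then $f(\xi)=0$ while $f\geq0$, so $f(\xi)-f(\xi-\tau)=-f(\xi-\tau)\leq0$ for all $\tau$, and the limsup is $\leq0\leq Q$ because $Q\geq0$. If $v(\xi)>0$, then by continuity $v$ is bounded away from $0$ on a neighbourhood of $\xi$, so for small $\tau>0$ the mean value theorem applied to $s\mapsto s^{1/\eta}$ on the interval with endpoints $v(\xi-\tau)$ and $v(\xi)$ produces a point $\theta_\tau$ between these two values (take $\theta_\tau=v(\xi)$ if they coincide) with
\begin{align*}
f(\xi)-f(\xi-\tau)=\tfrac{1}{\eta}\,\theta_\tau^{\frac{1}{\eta}-1}\bigl(v(\xi)-v(\xi-\tau)\bigr).
\end{align*}
As $\tau\searrow0$ we have $\theta_\tau\to v(\xi)$ by continuity of $v$, hence $\tfrac{1}{\eta}\theta_\tau^{1/\eta-1}\to\tfrac{1}{\eta}v(\xi)^{1/\eta-1}>0$, and multiplying a limsup by a factor converging to a strictly positive limit passes through the limsup. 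Using the hypothesis we obtain
\begin{align*}
\limsup_{\tau\searrow0}\frac{f(\xi)-f(\xi-\tau)}{\tau}
=\tfrac{1}{\eta}v(\xi)^{\frac{1}{\eta}-1}\limsup_{\tau\searrow0}\frac{v(\xi)-v(\xi-\tau)}{\tau}
\leq\tfrac{1}{\eta}v(\xi)^{\frac{1}{\eta}-1}\cdot\eta Q\,v(\xi)^{1-\frac{1}{\eta}}=Q,
\end{align*}
which is the required bound in this case as well.

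The argument is short and its only delicate points are real-analytic bookkeeping: that the prefactor $\tfrac{1}{\eta}\theta_\tau^{1/\eta-1}$ may be pulled out of the $\limsup$ (legitimate since it converges to a strictly positive limit and the remaining $\limsup$ is bounded above by the hypothesis), and that the zero set of $v$ must be treated separately (there the bound is automatic from $f\geq0$ and $Q\geq0$, which is exactly why the hypothesis need only be imposed where $v(\xi)\neq0$). All the substantive content — converting a pointwise lower bound on backward difference quotients into a global linear estimate — is already contained in Lemma~\ref{lem:backdiff}, so no further estimates are needed.
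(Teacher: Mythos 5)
Your proof is correct and follows the paper's approach essentially verbatim: substitute $f=v^{1/\eta}$, treat the zero set of $v$ separately using $Q\geq0$ and $f\geq0$, pass a positive convergent prefactor through the $\limsup$ (you obtain it from the mean value theorem for $s\mapsto s^{1/\eta}$, the paper equivalently from the difference quotient $\tfrac{f(\xi)^\eta-f(\xi-\tau)^\eta}{f(\xi)-f(\xi-\tau)}\to\eta f(\xi)^{\eta-1}$), and invoke Lemma~\ref{lem:backdiff}. The two arguments are the same in structure and content.
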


\begin{proof}
We consider the function $f=v^{\frac{1}{\eta}}$. 
Let $0<\xi\leq T$ be arbitrary. 
If $v(\xi)\neq0$, then by assumption
\begin{align}\notag
\eta Q\geq f(\xi)^{1-\eta}&\limsup_{\tau\searrow0}\frac{1}{\tau}
\bigl(f(\xi)^\eta-f(\xi-\tau)^\eta\bigr)
\\\notag
={}&\limsup_{\tau\searrow0}\frac{1}{\tau} 
\bigl(f(\xi)-f(\xi-\tau)\bigr)\frac{f(\xi)^{\eta}-f(\xi-\tau)^{\eta}}{f(\xi)-f(\xi-\tau)} f(\xi)^{1-\eta}
\\ 
={}&\limsup_{\tau\searrow0}\frac{\eta}{\tau} 
\bigl(f(\xi)-f(\xi-\tau)\bigr). 
\label{eqn:180830-4}
\end{align}
Dividing by $\eta$ yields
\begin{align}\label{eqn:leqQ}
\limsup_{\tau\searrow0}\frac{f(\xi)-f(\xi-\tau)}{\tau}\leq Q. 
\end{align}
If $v(\xi)=0$ we have
\begin{align*}
\limsup_{\tau\searrow0}\frac{f(\xi)-f(\xi-\tau)}{\tau}\leq 0
\end{align*} 
since $f(\xi)=v(\xi)^{\frac{1}{\eta}}=0$ and $f\geq0$. 
We conclude that \eqref{eqn:leqQ} holds not only where $v(\eta)\neq0$ but in fact for every $0<\xi\leq T$ because $Q\geq0$ is assumed. 
The claim follows by multiplying \eqref{eqn:leqQ} with $-1$ and applying Lemma \ref{lem:backdiff}. 
\end{proof}

\begin{lem}\label{lem:backdiff3} 
Let $\alpha\in\interval{0,1}$ and $C\geq0$. 
Let $J\colon [0,T]\to\Interval{0,\infty}$ be continuous satisfying 
\begin{align*}
\limsup_{\tau\searrow0}\frac{J(t)-J(t-\tau)}{\tau}\leq C \Bigl(\frac{J(t)}{t}\Bigr)^{1-\alpha}
\end{align*}
for every $0<t\leq T$. 
Then $J(t)^{\alpha}-J(0)^{\alpha}\leq C t^{\alpha}$ for every $t\in[0,T]$.
\end{lem}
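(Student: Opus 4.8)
The plan is to mimic the proof of Lemma \ref{lem:backdiff2}: after the substitution $f\vcentcolon=J^{\alpha}$ the hypothesis becomes a bound on the backward difference quotient of $f$, and the conclusion follows from Lemma \ref{lem:backdiff}. The only new feature compared with Lemma \ref{lem:backdiff2} is that this bound is not a constant but the $t$-dependent, integrable expression $\alpha C\,t^{\alpha-1}$, so one extra step is needed before Lemma \ref{lem:backdiff} can be applied.

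First I would show that $f=J^{\alpha}$, which is continuous and non-negative on $[0,T]$, satisfies
\[
\limsup_{\tau\searrow0}\frac{f(t)-f(t-\tau)}{\tau}\leq \alpha C\,t^{\alpha-1}\qquad\text{for every }0<t\leq T.
\]
At a point $t$ with $J(t)>0$ this is obtained exactly as in Lemma \ref{lem:backdiff2}: writing $f(t)-f(t-\tau)=\frac{f(t)-f(t-\tau)}{J(t)-J(t-\tau)}\bigl(J(t)-J(t-\tau)\bigr)$ and noting that, by continuity of $J$, for $\tau$ small both $J(t-\tau)$ and $J(t)$ lie in a fixed compact subinterval of $\interval{0,\infty}$, so the mean value theorem for $x\mapsto x^{\alpha}$ gives $\frac{f(t)-f(t-\tau)}{J(t)-J(t-\tau)}=\alpha\xi_{\tau}^{\alpha-1}$ for some $\xi_{\tau}$ between $J(t-\tau)$ and $J(t)$; hence this prefactor converges to the positive limit $\alpha J(t)^{\alpha-1}$ and may be pulled out of the $\limsup$, yielding
\[
\limsup_{\tau\searrow0}\frac{f(t)-f(t-\tau)}{\tau}=\alpha J(t)^{\alpha-1}\limsup_{\tau\searrow0}\frac{J(t)-J(t-\tau)}{\tau}\leq\alpha J(t)^{\alpha-1}\,C\Bigl(\frac{J(t)}{t}\Bigr)^{1-\alpha}=\alpha C\,t^{\alpha-1}.
\]
At a point $t$ with $J(t)=0$ one has $f(t)=0\leq f(t-\tau)$, so the difference quotient is non-positive and the inequality holds trivially.

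Next I would set $w(t)\vcentcolon=f(t)-f(0)-C\,t^{\alpha}$, a continuous function on $[0,T]$ with $w(0)=0$. Since $t\mapsto C\,t^{\alpha}$ is differentiable for $t>0$ with derivative $\alpha C\,t^{\alpha-1}$, the quotient $\tau^{-1}C\bigl(t^{\alpha}-(t-\tau)^{\alpha}\bigr)$ converges to $\alpha C\,t^{\alpha-1}$ as $\tau\searrow0$, whence
\[
\limsup_{\tau\searrow0}\frac{w(t)-w(t-\tau)}{\tau}=\limsup_{\tau\searrow0}\frac{f(t)-f(t-\tau)}{\tau}-\alpha C\,t^{\alpha-1}\leq0
\]
for every $0<t\leq T$. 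Applying Lemma \ref{lem:backdiff} to the continuous function $-w$ with $Q=0$ yields $-w(t)+w(0)\geq0$, i.e. $w(t)\leq0$, which is precisely $J(t)^{\alpha}-J(0)^{\alpha}\leq C\,t^{\alpha}$. The only genuinely delicate point is the first step: justifying that the convergent positive factor $\alpha J(t)^{\alpha-1}$ may be extracted from the $\limsup$, and isolating the case $J(t)=0$ where this factor would blow up; once the $t$-dependent bound $\alpha C\,t^{\alpha-1}$ is in place, passing to its primitive $C\,t^{\alpha}$ and invoking Lemma \ref{lem:backdiff} is routine.
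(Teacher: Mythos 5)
Your proof is correct and follows essentially the same route as the paper's: both reduce to showing $\limsup_{\tau\searrow0}\tau^{-1}\bigl(J(t)^{\alpha}-J(t-\tau)^{\alpha}\bigr)\leq\alpha C\,t^{\alpha-1}$, then subtract $C\,t^{\alpha}$ and invoke Lemma~\ref{lem:backdiff}. The only cosmetic difference is in how the convergent positive prefactor is extracted: the paper (via the proof of Lemma~\ref{lem:backdiff2}) inserts the algebraic ratio $\frac{f(\xi)^{\eta}-f(\xi-\tau)^{\eta}}{f(\xi)-f(\xi-\tau)}$ and notes its limit, whereas you use the mean value theorem for $x\mapsto x^{\alpha}$ — both are just ways of saying the same thing, and your separate treatment of the case $J(t)=0$ matches the paper's handling of $v(\xi)=0$ in Lemma~\ref{lem:backdiff2}.
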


\begin{proof}
As in the proof of Lemma \ref{lem:backdiff2} (with $\alpha=\frac{1}{\eta}$) we obtain 
\begin{align*}
\limsup_{\tau\searrow0}\frac{J(t)^{\alpha}-J(t-\tau)^{\alpha}}{\tau}&\leq\alpha C t^{\alpha-1}  
\end{align*}
for every fixed $t\in\intervaL{0,T}$. 
In particular, $f(\xi)=J(\xi)^{\alpha}-C\xi^{\alpha}$ satisfies 
\begin{align*}
\limsup_{\tau\searrow0}\frac{f(\xi)-f(\xi-\tau)}{\tau}&\leq\alpha C\xi^{\alpha-1}
-C\lim_{\tau\searrow0}\frac{\xi^{\alpha}-(\xi-\tau)^{\alpha}}{\tau}=0
\end{align*}
and we may conclude as before by multiplying with $-1$ and applying Lemma \ref{lem:backdiff}. 
\end{proof}


\clearpage 

\def\cprime{$'$}

\end{document}